\documentclass[]{interact}
\usepackage{graphicx} % Required for inserting images
\usepackage[numbers,sort&compress]{natbib}% Citation support using natbib.sty
\bibpunct[, ]{[}{]}{,}{n}{,}{,}% Citation support using natbib.sty
\renewcommand\bibfont{\fontsize{10}{12}\selectfont}% Bibliography support using natbib.sty
\usepackage{amsfonts}
\usepackage{amssymb}
\usepackage{amsmath}
\usepackage{subfigure}
\usepackage{booktabs}
\usepackage{url}
\usepackage{enumitem}

\usepackage{tikz}
\usetikzlibrary{shapes,arrows}
\usetikzlibrary{calc}

\usepackage[english]{babel}

\usepackage{lscape} 
\usepackage{mdframed} % This one is new!!!
\usepackage{adjustbox}
\usepackage[ruled,vlined]{algorithm2e}
\usepackage{todonotes} 
\usepackage{amsthm}
\usepackage[T1]{fontenc}

\theoremstyle{definition}
\newtheorem{theorem}{Theorem}%
\newtheorem{remark}{Remark}%
\newtheorem{lemma}{Lemma}%
\newtheorem{assumption}{Assumption} 

\newtheorem{definition}{Definition}%

\newcommand{\x}{{\boldsymbol x}}
\newcommand{\y}{{\boldsymbol y}}

\newcommand{\bd}{{\boldsymbol d}}
\newcommand{\bg}{{\boldsymbol g}}
\newcommand{\be}{{\boldsymbol e}}
\newcommand{\bp}{{\boldsymbol p}}

\newcommand{\bs}{{\boldsymbol s}}
\newcommand{\bu}{{\boldsymbol u}}

\newcommand{\bxi}{\mbox{\boldmath$\xi$}}

\newcommand{\R}{\mathbb{R}}
\newcommand{\K}{\mathcal{K}}
\newcommand{\inexactLMBM}{{\tt InexactLMBM}}

\newcommand{\norm}[1]{\lVert#1\rVert}

\DeclareMathOperator{\diag}{diag}
\DeclareMathOperator{\tr}{tr}

\begin{document}

%\articletype{ARTICLE TEMPLATE}% Specify the article type or omit as appropriate

\title{Inexact Limited Memory Bundle Method}

\author{
\name{Jenni Lampainen\textsuperscript{a}\thanks{CONTACT Jenni Lampainen. Email: jmlamp@utu.fi}, Kaisa Joki\textsuperscript{a}, Napsu Karmitsa\textsuperscript{b} and Marko M. Mäkelä\textsuperscript{a}}
\affil{\textsuperscript{a}Department of Mathematics and Statistics, University of Turku, FI-20014 Turku, Finland; \textsuperscript{b}Department of Computing, University of Turku, FI-20014 Turku, Finland}
}

\maketitle

\begin{abstract}
Large-scale nonsmooth optimization problems arise in many real-world applications, but obtaining exact function and subgradient values for these problems may be computationally expensive or even infeasible. In many practical settings, only inexact information is available due to measurement or modeling errors, privacy-preserving computations, or stochastic approximations, making inexact optimization methods particularly relevant. In this paper, we propose a novel inexact limited memory bundle method for large-scale nonsmooth nonconvex optimization. The method tolerates noise in both function values and subgradients. We prove the global convergence of the proposed method to an approximate stationary point. Numerical experiments with different levels of noise in function and/or subgradient values show that the method performs well with both exact and noisy data. In particular, the results demonstrate competitiveness in large-scale nonsmooth optimization and highlight the suitability of the method for applications where noise is unavoidable, such as differential privacy in machine learning.
\end{abstract}

\begin{keywords} 
Nonsmooth optimization; nonconvex optimization; large-scale optimization; bundle methods; inexact information; global convergence
\end{keywords}

\begin{amscode}
90C26, 90C06, 49J52, 65K05
\end{amscode}

\section{Introduction}
\label{sec_Introduction}
Nonsmooth optimization problems, in which the functions are not necessarily continuously differentiable, arise naturally in a wide range of real-world applications. These include engineering \cite{MisSta:1998}, mechanics \cite{MorPanStr:1988}, economics \cite{OutKocZow:1998}, and computational chemistry \cite{YanChe:2004}, as well as image and signal processing \cite{KarMajMak:2001}, to name a few. For instance, in economics nonsmoothness is encountered in equilibrium models, location problems, and other decision-making tasks \cite{BouFadRou:2026, Demyanov:2013, MovNob:2010, SinMis:2026}. Similarly, in image and signal processing, nonsmooth formulations are widely used in reconstruction, denoising, and inverse problems \cite{RamArg:2015, Nik:2010, WanChe:2022}. One of the most visible modern application domains is machine learning and data analysis, where nonsmooth optimization problems arise in tasks such as clustering \cite{BagKarTah:2025, BagTahUgo:2016, KarBagTah:2018, bigclust2025, LamJokKar:2026}, regression \cite{BagUgo:2017, BagTahKarEtal:2022, KarTahJokMakBagMak2023, oscar:2023}, and classification \cite{AstFud:2007, AstFudGor:2008, KarJokAir:2025, pauliina}.

Research in nonsmooth optimization is mainly focused on convex problems. In the convex case, the optimization process can be simplified, and the global optimality of a solution can be guaranteed. However, in practical applications, nonconvex problems are often more common. Furthermore, many modern applications, especially those arising in machine learning, involve very large datasets containing millions of data points and high-dimensional feature vectors, which impose strict requirements on computational efficiency and memory usage. Despite the practical importance of large-scale nonsmooth nonconvex optimization problems, only a few algorithms are capable of efficiently solving them. Among these, the limited memory bundle method (LMBM) \cite{HaaMieMak:2004,HaaMieMak:2007} and its variations (see, e.g., \cite{KarBag:2012, bigclust2025, KarMak:2010, KarJokAir:2025}) are particularly notable.

In practice, exact function and subgradient information are often unavailable. This may result from measurement errors, noisy data, model approximations, reliance on stochastic simulation, or the addition of privacy-preserving noise, for example in the context of differential privacy \cite{Dwork2006, DworkRoth2014, DPraport2025}. While several inexact nonsmooth optimization methods have been proposed, most are restricted either to convex settings \cite{DeoSol:2020, KhaMorTra:2024optmethods, YueZhoSo:2019} or to small- and medium-scale nonconvex problems \cite{HarSagSol:2016, HerUlb:2019, HosNob:2023, LvPanMen:2018, Noll:2013}. To the best of our knowledge, no inexact methods currently exist for nonsmooth nonconvex optimization that can efficiently handle large-scale problems.

In this paper, we present \inexactLMBM, a novel \emph{inexact limited memory bundle method} for large-scale nonsmooth and nonconvex optimization. Specifically, we consider unconstrained optimization problems of the form
\begin{align}
  \label{opt_teht}
  \begin{cases}
    \text{minimize}\quad  &f({\x}) \\
    \text{subject to} &\x \in \R^n,
  \end{cases}
\end{align}
where the objective function $f:\R^n \rightarrow \R$ is locally Lipschitz continuous. The proposed method extends the classical LMBM framework \cite{HaaMieMak:2004, HaaMieMak:2007} to the inexact setting by explicitly incorporating inexact function and subgradient information, while retaining, and potentially improving, the efficiency of the original LMBM. This is achieved by employing a modified (i.e., tilted) subgradient together with a modified subgradient locality measure, as derived from those proposed in \cite{HarSagSol:2016}. These modifications provide sufficient control over the problem in the presence of inexact information and, at the same time, make a line search procedure unnecessary. As a result, the proposed {\inexactLMBM} does not require the objective function to be semi-smooth \cite{Bih:1984}, in contrast to most bundle methods with a line search procedure \cite{BagGauKarMakTah2020} including the original LMBM.

We establish global convergence of the proposed method to an approximate stationary point. Here, global convergence does not refer to convergence to a global minimizer, but rather to the fact that convergence is guaranteed from any starting point. The convergence of {\inexactLMBM} is guaranteed under mild assumptions:
\begin{itemize}
\item the objective function is locally Lipschitz continuous;
\item the level set is bounded for each starting point;
\item the sequence of convexification parameters is bounded; and
\item the errors arising in function and subgradient evaluations remain bounded.
\end{itemize}
It is worth to note that achieving global convergence without assuming semi-smoothness constitutes a significant advancement.

The performance of {\inexactLMBM} is evaluated through numerical experiments. In the case of exact function and subgradient information, the method is compared with relevant benchmark methods, including the original LMBM. To study the effects of inexactness, the algorithm is tested under various scenarios, including noisy subgradients and cases where both function and subgradient evaluations are inexact.

The remainder of this paper is organized as follows. Section \ref{sec_Limited_Memory_Variable_Metric_Bundle_Method} describes \inexactLMBM. In Section \ref{sec_Convergence_Analysis_in_LMBM}, we prove the global convergence of the method. The results of numerical experiments are presented in Section \ref{sec_Numerical_Experiments}. Finally, Section \ref{sec_Conclusion} concludes the paper. All test problems, additional results, and a detailed description of the matrix-updating procedure are provided in the Appendices.

\section{Inexact Limited Memory Bundle Method}
\label{sec_Limited_Memory_Variable_Metric_Bundle_Method}

In this section, we describe the new {\inexactLMBM}. The algorithm retains the main structure of the original LMBM, including the use of serious and null steps, aggregation of subgradients, and limited memory variable metric updates for computing the search direction. However, it differs from the original method in two important aspects: function values and subgradients may be inexact, and the line search procedure used in many nonconvex bundle methods -- including the original LMBM -- can be omitted. The overall structure of the new method is illustrated in the flowchart shown in Figure~\ref{flowchart}.

%-----------FLOWCHART ALKAA-----------

% Define block styles

\tikzstyle{decision1} = [diamond, draw, fill=blue!10, 
text width=7.5em, text badly centered, inner sep=0pt]

\tikzstyle{decision2} = [diamond, draw, fill=blue!10, 
text width=11em, text badly centered, inner sep=0pt]

\tikzstyle{alku} = [rectangle, draw, fill=blue!10,
    text width=8em, text centered, minimum height=3em,line width=1.0mm]
    
\tikzstyle{block1} = [rectangle, draw, fill=blue!10,
    text width=11em, text centered, minimum height=2em]
    
\tikzstyle{block2} = [rectangle, draw, fill=blue!10,
    text width=8em, text centered, minimum height=2.6em]

\tikzstyle{end} = [rectangle, draw, fill=blue!10, line width=0.7mm, text width=4.4em, text centered, minimum height=2.5em]
    
\tikzstyle{line} = [draw, line width=0.45mm, color=black!100, -latex']

\begin{figure}[h!]
\centering
\resizebox{0.85\linewidth}{!}{
\begin{tikzpicture}[node distance = 1.5cm, auto]

%------- Place nodes--------
		
    \node [alku] (keski1) {\textbf{\,Initialization}};

    \node [block2, below of=keski1, node distance=2.2cm] (keskiuusi1) {\textbf{Direction finding}
    };
		
    \node [decision2, below of=keskiuusi1, aspect=2, node distance=3.0cm] (keski2) {\textbf{Desired accuracy?}};
   
    \node [block1, left of=keski2, node distance=5.5cm] (vasen1) {\textbf{Direction finding}\\using the limited\\ memory \textbf{BFGS} update};   
    
    \node [block1, right of=keski2, node distance=5.5cm] (oikea1) {\textbf{Direction finding}\\using the limited\\ memory \textbf{SR1} update};   

    \node [decision1, below of=keski2, node distance=6cm] (keski3) {Does the new \textbf{auxiliary point} improve the solution enough?
    }; 

    \node [block2, right of=keski3, node distance=5.5cm] (oikea2) {\textbf{Null step\\and aggregation}
    };

    \node [block2, left of=keski3, node distance=5.5cm] (vasen2) {\textbf{Serious step}};

    \coordinate (uppermid2) at ($(keski2)!0.5!(oikea1)$);
    \coordinate (lowermid2) at ($(keski3)!0.5!(oikea2)$);
    \node [end] (keskiblokki) at ($(uppermid2)!0.5!(lowermid2)$){\textbf{STOP}};

    \coordinate (uppermid1) at ($(vasen1)!0.5!(keski2)$);
    \coordinate (lowermid1) at ($(vasen2)!0.5!(keski3)$);
    \node [block2] (keskiuusi2) at ($(uppermid1)!0.5!(lowermid1)$) {\textbf{Compute an auxiliary point}
    };

% --- NUOLET ---

% keskimmäinen sarake
\path [line] (keski1) -- (keskiuusi1);
\path [line] (keskiuusi1) -- (keski2);
\path [line] (keski2.south west) -- (keskiuusi2.north) node[midway, right]{No};
\path [line] (keskiuusi2.south) -- (keski3);

% oikea haara
\path [line] (keski3.east) -- (oikea2.west)  node[midway, above]{No};
\path [line] (oikea2.north) -- (oikea1.south);
\path [line] (oikea1.west) -- (keski2.east);

% vasen haara
\path [line] (keski3.west) -- (vasen2.east)  node[midway, above]{Yes};
\path [line] (vasen2.north) -- (vasen1.south);
\path [line] (vasen1.east) -- (keski2.west);

% keskiblokkiin
\path [line] (keski2.south east) -- (keskiblokki.north)  node[midway, right]{Yes};
    
\end{tikzpicture}
}
\caption{Flowchart of {\inexactLMBM}.}
\label{flowchart}
\end{figure}

%-----------FLOWCHART LOPPUU-----------
\vspace{-0.3cm} % ihan vaan että r>0 tuli samalle sivulle

\paragraph*{Notations and preliminaries.}
We begin by introducing some basic notations. Bolded symbols are used to denote vectors, and $\norm{ \, \cdot \,}$ is the Euclidean norm in $\R^n$. The inner product is defined by $\x^\top \y = \sum_{i=1}^n x_i y_i$, and $I \in \R^{n \times n}$ is the \emph{identity matrix}. In addition, we denote by $B_r$ the \emph{open ball} centered at the origin with radius $r>0$.

\texttt{InexactLMBM} generates a sequence of \emph{basic points} $\{\x_k\} \subset \R^n$ together with a sequence of \emph{auxiliary points} $\{\y_k\} \subset \R^n$. At each iteration $k$, the basic point $\x_k$ serves as the stability center and retains the best known solution found so far, whereas at the auxiliary point $\y_k$ new information -- namely an inexact function value and subgradient -- is computed. This information is stored as a \emph{bundle element} and used to steer the solution process when needed. Furthermore, each iteration is classified as either a serious step or a null step: in a serious step, the basic point is updated, whereas in a null step the update is rejected.

Throughout, we assume that the objective function $f$ in problem \eqref{opt_teht} is locally Lipschitz continuous. A function $f: \R^n \to \R$ is \emph{locally Lipschitz continuous} on $\R^n$ if, for any bounded subset $X \subset \R^n$, there exists a constant $L > 0$ such that
$$|f(\x)-f(\y)| \leq L \norm{\x - \y} \quad \text{for all} \,\,\, \x, \y \in X.$$

The \emph{Clarke subdifferential} of a locally Lipschitz continuous function $f: \R^n \to \R$ at any point $\x \in \R^n$ is defined as \cite{Cla:1983}
$$
\partial f(\x) = \operatorname{conv} \left\{ \lim_{i \to \infty} \nabla f(\x_i) \;|\; \x_i \to \x \; \text{ and } \; \nabla f(\x_i) \text{ exists} \right\},
$$
where '$\operatorname{conv}$' denotes the convex hull of a set. Each $\boldsymbol{\xi} \in \partial f(\x)$ is called a \emph{subgradient} of $f$ at $\x$. Since the new method relies on inexact information, we define the following function and subgradient values using inexact oracles:
\begin{itemize}
    \item $f_k = f(\y_k ) - q_k$,\, where $q_k$ is an unknown error; and
    \item $\bxi_k \in \partial f(\y_k ) + B_{r_k}$,\, where $r_k$ is an unknown error.
\end{itemize}
The sign of the error $q_k$ is not fixed, and therefore the true function value can be either overestimated or underestimated. The error terms $q_k$ and $r_k$ are assumed to be bounded. This means that there exist nonnegative error bounds $\bar{q}$ and $\bar{r}$ such that
\begin{align*}
    |q_k| \leq \bar{q}\quad \text{and}\quad 0\leq r_k \leq \bar{r}\quad \text{for all } k.
\end{align*}
Note that if $\bar{q}=\bar{r}=0$, the used function and subgradient values are exact. Moreover, it always holds that $\x_k = \y_m$, where $m$ is the index of the iteration after the latest serious step. In what follows, we denote the noisy objective function value at $\x_k$ by $\hat{f}_k$. In other words, we have
$$
\hat{f}_k = f_m,\quad \text{for all } k\geq m.
$$

\paragraph*{Bundle elements.}
As already mentioned, we compute a new bundle element at each auxiliary point $\y_k$. This element consists not only of the inexact objective function value and subgradient at $\y_k$, but also of a locality measure that quantifies how well the bundle element approximates the objective function value at the current basic point $\x_k$. To be more specific, we apply a modified locality measure and a modified subgradient inspired by \cite{HarSagSol:2016}. The bundle element at $\y_k$ is defined as the triplet 
$$(\y_k, \bxi_k^{mod}, \beta_k),$$ 
where 
$$\bxi^{mod}_{k}=\bxi_{k}+\eta_k(\y_{k}-\x_k)$$
is the \emph{modified subgradient} (i.e., modified slope) and 
$$
\beta_k= \alpha_k + \frac{\eta_k}{2}\norm{\y_k-\x_k}^2,
$$
is the \emph{modified locality measure}. In addition, 
\begin{align}
\label{kaava_alpha}
    \alpha_{k}=\hat{f}_k - f_k -\bxi_{k}^\top(\x_k-\y_{k})
\end{align}
is the \textit{linearization error} and the \textit{convexification parameter} $\eta_k$ is defined by
\begin{align}
\label{kaava_eta}
    \eta_k = \begin{cases}
        \max \left\{ \frac{-2\alpha_k}{\norm{\y_{k}-\x_k}^2},0\right\}+\gamma, \quad &\x_k \neq \y_k \\
        \gamma, \quad &\text{otherwise},
    \end{cases}
\end{align}
where $\gamma > 0$ is a small scalar. It is easy to see, that $\eta_k \geq \gamma > 0$ always holds.

The modified subgradient and locality measure ensure that the line search is no longer required in the new method: whenever necessary, a change in the next search direction can be guaranteed by using the bundle element computed at the new auxiliary point with a constant stepsize $t_k=1$ (or, more generally, any stepsize $t_k \in (0,1]$). This property does not hold for the original LMBM without employing a specific two-step line search procedure \cite{VlcLuk:2001} (see also \cite{HaaMieMak:2004, HaaMieMak:2007}), which in turn relies on an additional semi-smoothness assumption \cite{Bih:1984} not needed here. Note, that if $\x_k = \y_k$, then $\bxi^{mod}_k = \bxi_k$ and the bundle element is $(\x_k, \bxi_k, 0)$. In addition, the following lemma ensures that $\beta_k \geq 0$ holds.

\begin{lemma}
\label{lemmabeta}
    For the modified locality measure, we always have 
    \begin{align}
    \label{kaavabeta}
        \beta_k \geq \frac{\gamma}{2} \norm{\y_k-\x_k}^2.
    \end{align}
    In addition, $\beta_k = 0$ only when $\x_k = \y_k$.
\end{lemma}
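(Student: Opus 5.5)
The plan is a direct case analysis on whether $\x_k = \y_k$, using only the definitions of $\beta_k$ and $\eta_k$ in \eqref{kaava_eta}. If $\x_k = \y_k$, then $\eta_k = \gamma$ and $\alpha_k = \hat{f}_k - f_k$. In this case $\y_k$ is the current basic point, so the function value at $\y_k$ is exactly the one stored as $\hat f_k$ (we have $\hat f_k = f_m$ with $\x_k=\y_m$, and evaluating at the same point gives the same stored value). Hence $\alpha_k = 0$ and $\beta_k = 0$, which agrees with the remark after \eqref{kaava_eta} that the bundle element is then $(\x_k,\bxi_k,0)$. The inequality \eqref{kaavabeta} then holds with equality, both sides being $0$.

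If $\x_k \neq \y_k$, set $d = \norm{\y_k - \x_k}^2 > 0$ and split according to the sign of $\alpha_k$.

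\emph{Subcase $\alpha_k \geq 0$.} Here $\eta_k = \gamma$, so
\[
\beta_k = \alpha_k + \tfrac{\gamma}{2} d \geq \tfrac{\gamma}{2} d .
\]

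\emph{Subcase $\alpha_k < 0$.} Here $\eta_k = -2\alpha_k/d + \gamma$, so
\[
\beta_k = \alpha_k + \tfrac{1}{2}\left(-2\alpha_k/d + \gamma\right) d = \alpha_k - \alpha_k + \tfrac{\gamma}{2} d = \tfrac{\gamma}{2} d .
\]

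In both subcases \eqref{kaavabeta} holds. Since $\gamma > 0$ and $d > 0$, we also get $\beta_k > 0$ whenever $\x_k \neq \y_k$. Taken together with the first case, this shows that $\beta_k = 0$ only when $\x_k = \y_k$.

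The only delicate point is the case $\x_k = \y_k$. The case analysis itself is purely algebraic, but this case relies on the convention that the inexact oracle returns, at the current basic point, the same value $\hat f_k$ that is already stored. I would state this explicitly, consistent with the paper's remark that the bundle element is then $(\x_k,\bxi_k,0)$. Even without that convention, the lower bound \eqref{kaavabeta} in this case would only require $\alpha_k \geq 0$, because its right-hand side is $0$.
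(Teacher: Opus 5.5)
Your proof is correct and follows the paper's argument essentially verbatim: you use the same split into $\x_k=\y_k$ versus $\x_k\neq\y_k$, with the latter case divided by the sign of $\alpha_k$ to get $\beta_k\geq\frac{\gamma}{2}\norm{\y_k-\x_k}^2$ (equality when $\alpha_k<0$). The paper simply asserts $\alpha_k=0$ when $\x_k=\y_k$, so your explicit remark is a fair sharpening rather than a different route: this rests on $\hat{f}_k$ coinciding with the oracle value at that point, which is automatic at initialization and after a serious step, and otherwise matters only in the degenerate null step with $\bd_k=\mathbf{0}$.
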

\begin{proof}
    We first show that the inequality \eqref{kaavabeta} holds when $\x_k \neq \y_k$. We divide the analysis into two parts based on the sign of the linearization error. 
    If $\alpha_k \geq 0$ then $\eta_k=\gamma > 0$, and
    $$\beta_k=\alpha_k + \frac{\eta_k}{2}\norm{\y_k-\x_k}^2 \geq \frac{\gamma}{2}\norm{\y_k-\x_k}^2.$$
    If $\alpha_k < 0$, then $\eta_k = \frac{-2\alpha_k}{\norm{\y_k-\x_k}^2}+\gamma > 0$, and
  $$\beta_k=\alpha_k + \frac{\eta_k}{2}\norm{\y_k-\x_k}^2 = \alpha_k + \frac{1}{2} \left( \frac{-2\alpha_k}{\norm{\y_k-\x_k}^2}+\gamma \right) \norm{\y_k-\x_k}^2 = \frac{\gamma}{2}\norm{\y_k-\x_k}^2.$$ Therefore, \eqref{kaavabeta} holds when $\x_k \neq \y_k$.

  Finally, if $\x_k=\y_k$ then $\alpha_k = 0$ and due to the definition of the modified locality measure, it is trivial to see that also $\beta_k=0$. Furthermore, in this case $\beta_k \geq \frac{\gamma}{2}\norm{\y_k-\x_k}^2 =0$. Thus, the inequality \eqref{kaavabeta} always holds and also shows that whenever $\x_k \neq \y_k$ then $\beta_k > 0$. This completes the proof.
\end{proof}

\paragraph*{Search direction and stepsize.} Next, we describe the main steps of the new method. First, at iteration $k$, a search direction is generated as
$$\bd_k=-D_k \tilde{\bxi}_k,$$
where $\tilde{\bxi}_k$ denotes an (aggregated) subgradient obtained from the current bundle, and $D_k$ is a variable metric matrix that approximates the inverse Hessian in the smooth case. After the search direction $\bd_k$ is generated, we determine a stepsize $t_k \in [t_{min}, 1]$, where $t_{min} \in (0,1]$ is a positive lower bound. In this process, a sufficient number of the most recent bundle elements -- including their function values and modified subgradients -- are used to estimate a suitable stepsize. Note, that the bundle elements themselves are not updated after being computed. Consequently, the stepsize calculation is computationally inexpensive. The procedure can be regarded as a heuristic step, which often improves practical performance, but the method remains theoretically valid even without it. This heuristic step corresponds to the initial stepsize determination used in the original LMBM and its predecessor, the variable metric bundle method introduced in \cite{VlcLuk:2001}. However, in these methods, the stepsize determination must be continued by an additional line search. In our new method, the use of modified subgradients and differently defined locality measures provide sufficient control over the problem, making the line search unnecessary. For further details on stepsize selection in \inexactLMBM, see the description of the initial stepsize in \cite{VlcLuk:2001}.

\paragraph*{Serious and null steps.} 
When the search direction $\bd_k$ and the stepsize $t_k$ are determined, we define the new auxiliary point by
$$\y_{k+1}=\x_k + t_k \bd_k.$$
Then we evaluate the inexact function value and subgradient and test a decrease condition. The \emph{sufficient descent criterion} is defined by
\begin{align}
\label{eq_serious_descent_criterion_in_LMBM}
f_{k+1} - \hat{f}_k \leq - \varepsilon_L t_k w_k,
\end{align}
where $\varepsilon_L \in (0, 1/2)$ and $w_k >0$ represents the desirable amount of descent of $f$ at $\x_k$. If the condition \eqref{eq_serious_descent_criterion_in_LMBM} is satisfied, a \textit{serious step} is taken: we set $\x_{k+1}=\y_{k+1}$ and $\hat{f}_{k+1}=f_{k+1}$ and add the element $(\x_{k+1}, \bxi_{k+1}, 0)$ to the bundle. Otherwise, if the condition \eqref{eq_serious_descent_criterion_in_LMBM} is not satisfied, a \textit{null step} is taken: we set $\x_{k+1}=\x_k$ and $\hat{f}_{k+1}=\hat{f}_k$ and add the new element $(\y_{k+1}, \bxi_{k+1}^{mod}, \beta_{k+1})$ corresponding to $\y_{k+1}$ to the bundle, while the basic point remains unchanged. For the bundle element corresponding to the null step, the following lemma holds.

\begin{lemma}
\label{lemma_nullstep}
If a null step is performed during iteration $k$, then the new bundle element $(\y_{k+1}, \bxi_{k+1}^{mod}, \beta_{k+1})$ satisfies the property
\begin{align}
\label{cond_null_step_in_LMBMinexact}
- \beta_{k+1} + t_k \bd_k^{\top} \bxi^{mod}_{k+1} \geq f_{k+1} - \hat{f}_k > -\varepsilon_L t_k w_k.
\end{align}
\end{lemma}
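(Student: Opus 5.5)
The second inequality in \eqref{cond_null_step_in_LMBMinexact} is exactly the failure of the sufficient descent criterion \eqref{eq_serious_descent_criterion_in_LMBM}, since a null step is taken precisely when that criterion does not hold. The only work is therefore the first inequality. I will prove it by direct substitution of the definitions of $\bxi^{mod}_{k+1}$, $\beta_{k+1}$ and $\alpha_{k+1}$ at the new auxiliary point $\y_{k+1}$.

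During a null step the basic point is not moved, so the bundle element at $\y_{k+1}$ is formed with respect to $\x_{k+1}=\x_k$ and $\hat f_{k+1}=\hat f_k$. Hence
$$\alpha_{k+1}=\hat f_k - f_{k+1} - \bxi_{k+1}^\top(\x_k-\y_{k+1}), \qquad \beta_{k+1}=\alpha_{k+1}+\frac{\eta_{k+1}}{2}\norm{\y_{k+1}-\x_k}^2,$$
and $\bxi^{mod}_{k+1}=\bxi_{k+1}+\eta_{k+1}(\y_{k+1}-\x_k)$.

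Since $\y_{k+1}-\x_k=t_k\bd_k$, I substitute $t_k\bd_k$ for $\y_{k+1}-\x_k$ throughout. This gives
$$t_k\bd_k^\top\bxi^{mod}_{k+1} = t_k\bd_k^\top\bxi_{k+1} + \eta_{k+1}\norm{t_k\bd_k}^2,$$
$$-\beta_{k+1} = -\hat f_k + f_{k+1} - t_k\bd_k^\top\bxi_{k+1} - \frac{\eta_{k+1}}{2}\norm{t_k\bd_k}^2.$$
Adding the two lines, the subgradient inner products cancel and
$$-\beta_{k+1}+t_k\bd_k^\top\bxi^{mod}_{k+1} = f_{k+1}-\hat f_k + \frac{\eta_{k+1}}{2}\norm{t_k\bd_k}^2 \geq f_{k+1}-\hat f_k .$$
The last step uses $\eta_{k+1}\ge\gamma>0$, which holds by the definition \eqref{kaava_eta}.

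The case $\y_{k+1}=\x_k$ needs no separate treatment: then $t_k\bd_k=0$, so $\beta_{k+1}=0$ and both sides equal $f_{k+1}-\hat f_k$ (here $\alpha_{k+1}=\hat f_k-f_{k+1}$, which may be nonzero because the oracle is inexact). The argument does not use Lemma~\ref{lemmabeta}; the only point needing care is to remember that a null step leaves $\x_k$ and $\hat f_k$ unchanged, so they are the correct reference point and value in all three definitions.
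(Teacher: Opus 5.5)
Your proof is correct and is the same direct computation the paper uses: substitute $\y_{k+1}-\x_k=t_k\bd_k$ into the definitions with the unchanged $\x_k$ and $\hat f_k$, cancel the $t_k\bd_k^\top\bxi_{k+1}$ terms, drop the nonnegative $\tfrac{\eta}{2}\norm{t_k\bd_k}^2$, and read the second inequality off the failed descent test. The paper writes $\eta_k$ where you write $\eta_{k+1}$, which agrees with Step~7, and only positivity of the parameter matters. One small slip in your closing remark: when $t_k\bd_k=\mathbf{0}$ you get $\beta_{k+1}=\alpha_{k+1}=\hat f_k-f_{k+1}$, not $\beta_{k+1}=0$; your conclusion that both sides equal $f_{k+1}-\hat f_k$ is still right, and your general identity already covers this case.
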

\begin{proof}
By definition, $$\bxi^{mod}_{k+1}=\bxi_{k+1}+\eta_k t_k \bd_k$$
and
$$
\beta_{k+1} = \alpha_{k+1} + \frac{\eta_k}{2} \|{t_k \bd_k}\|^2=\hat{f}_k - f_{k+1} + t_k \bd_k^{\top}\bxi_{k+1} + \frac{\eta_k}{2} \|{t_k \bd_k}\|^2,
$$
because in a null step $\hat{f}_{k+1} = \hat{f}_k$. Therefore,
\begin{align*}
- \beta_{k+1} + t_k \bd_k^{\top} \bxi^{mod}_{k+1} & = -\hat{f}_k + f_{k+1} - t_k \bd_k^{\top}\bxi_{k+1} - \frac{\eta_k}{2} \|{t_k \bd_k}\|^2 + t_k \bd_k^{\top} \bxi^{mod}_{k+1}\\
& = -\hat{f}_k + f_{k+1} - t_k \bd_k^{\top}\bxi_{k+1} - \frac{\eta_k}{2} \|{t_k \bd_k}\|^2 + t_k \bd_k^{\top} (\bxi_{k+1}+\eta_k t_k \bd_k) \\ 
& = -\hat{f}_k + f_{k+1} - t_k \bd_k^{\top}\bxi_{k+1} - \frac{\eta_k}{2} \|{t_k \bd_k}\|^2 + t_k \bd_k^{\top} \bxi_{k+1}+\eta_k\|t_k \bd_k\|^2 \\
& = -\hat{f}_k + f_{k+1} + \frac{\eta_k}{2} \|{t_k \bd_k}\|^2 \\
& \geq f_{k+1}-\hat{f}_k.
\end{align*}
Since this is a null step, we have
$$f_{k+1}-\hat{f}_k > -\varepsilon_L t_k w_k.$$
This completes the proof.
\end{proof}

\noindent As shown in Lemma \ref{lemma_nullstep}, the way we define the modified subgradient and locality measure ensures that the inequality \eqref{cond_null_step_in_LMBMinexact} is always satisfied at a null step, even if the linearization error $\alpha_k$ is negative. This property is essential for the convergence analysis and is not guaranteed by the original LMBM without the line search.

\paragraph*{Aggregation.}
The \emph{aggregation procedure} in {\inexactLMBM} is similar to that of the original LMBM and in it we calculate updated values of the aggregate subgradient and the aggregate locality measure. It uses only two bundle elements together with the current aggregate subgradient and locality measure to compute updated aggregate values. Consequently, {\inexactLMBM} involves three subgradients and two locality measures in total. By contrast, standard bundle methods typically employ $n+3$ bundle elements, which significantly increases the computational burden in large-scale problems \cite{KarBagMak:2012}. If more than two bundle elements are stored in the proposed method, they are used solely for determining the stepsize $t_k$. The main difference compared with the original LMBM is that we incorporate modified subgradients rather than standard ones. As previously, let $m$ denote the index of the iteration after the latest serious step. Suppose we have available pairs $(\bxi_m,0)$ and $(\bxi_{k+1}^{mod},\beta_{k+1})$, evaluated at $\x_m$ and $\y_{k+1}$, respectively, along with the current aggregate subgradient $\tilde{\bxi}_k$ and locality measure $\tilde{\beta}_k$ (with $\tilde{\bxi}_1=\bxi_1$ and $\tilde{\beta}_1=0$) . The new aggregate values $\tilde{\bxi}_{k+1}$ and $\tilde{\beta}_{k+1}$ are then defined as a convex combination
\begin{align*}
  \tilde{\bxi}_{k+1} = \lambda^k_{1} \bxi_m + \lambda^k_{2} \bxi_{k+1}^{mod} +
  \lambda^k_{3} \tilde{\bxi}_k \quad \text{and} \quad
  \tilde{\beta}_{k+1} = \lambda^k_{2} \beta_{k+1} + \lambda^k_{3} \tilde{\beta}_k,
\end{align*}
where the coefficients $\lambda^k_i$ satisfy $\lambda^k_i \geq 0$ for all $i \in \,\{1,2,3\,\}$ and $\sum_{i=1}^{3} \lambda^k_i=1$. These coefficients can be obtained by minimizing a straightforward quadratic function that depends on the three subgradients and the two locality measures (see Step~8 in Algorithm~\ref{Alg_LMBM_in_LMBM}). Importantly, this aggregation is performed only when a null step occurs at iteration $k$. In the case of a serious step, $\x_{k+1}=\y_{k+1}$ and we simply set
$$
\tilde{\bxi}_{k+1} = \bxi_{k+1} \in \partial f(\y_{k+1})+ B_{r_{k+1}} \quad \text{and} \quad \tilde{\beta}_{k+1}=0.
$$

\paragraph*{Matrix updating.} The matrix $D_k$ is not formed explicitly. Instead, the search direction $\bd_k$ is computed using a limited memory approach \cite{ByrNocSch:1994} (see also \cite{HaaMieMak:2004, HaaMieMak:2007}). The basic idea is that, rather than storing the matrices $D_k$, information from a small number (say $\hat{m}_c$) of recent iterations is used to implicitly define the variable metric matrix. More precisely, at each iteration the \emph{correction vectors} $\bs_k$ and  $\bu_k$ are stored, and the $\hat{m}_c$ most recent ones are used to define $D_k$. The correction vectors are defined by
$$ \bs_k = \y_{k+1}-\x_k \quad \text{and} \quad \bu_k = \bxi_{k+1}^{mod}-\bxi_m, $$
where $\y_{k+1}$ denotes the newest auxiliary point, $\bxi_{k+1}^{mod}$ the newest modified subgradient and $\bxi_m$ the subgradient calculated at the latest serious step. The correction vectors are used when performing limited memory updates: the limited memory BFGS (L-BFGS) updates after serious steps and the limited memory SR1 (L-SR1) updates after null steps.

It is worth noting, that the condition
\begin{align}
    \label{cond_SR1_pos_def_in_LMBM}
    - \bd_k^\top \bu_k - \tilde{\bxi}_k^\top \bs_k < 0
\end{align}
is checked before updating $D_k$ to $D_{k+1}$, and the update is simply skipped (i.e., we set $D_{k+1} = D_k$) if condition \eqref{cond_SR1_pos_def_in_LMBM} is not satisfied. This directly guarantees that $D_{k+1}$ is positive definite whenever it is obtained by the L-SR1 update \cite{HaaMieMak:2007}. Moreover, in \cite{HaaMieMak:2007} it is shown that condition \eqref{cond_SR1_pos_def_in_LMBM} implies that $\bu_k^\top \bs_k > 0$, which in turn ensures the positive definiteness of the matrices obtained by the L-BFGS update (see, e.g., \cite{ByrNocSch:1994}). Therefore, all matrices $D_{k+1}$ used in defining the search direction are positive definite. A more detailed description of the matrix-updating procedure is given in Appendix~\ref{appendix_matrixupdating}.

\paragraph*{Algorithm.} We present {\inexactLMBM } as Algorithm  \ref{Alg_LMBM_in_LMBM}.

\begin{algorithm}[!htbp]
{
\caption{\inexactLMBM}
\label{Alg_LMBM_in_LMBM}
{\small
\begin{minipage}{0.96\textwidth}
\begin{description}%[leftmargin=-0.3cm]
  \item[Data:] Select the final accuracy tolerance $\varepsilon > 0$, the parameter $\varepsilon_L \in (0,1/2)$, the tolerance $\gamma > 0$, the lower bound $t_{min} \in (0,1]$, the control parameter $C>0$ for the length of the direction vector, and the correction parameter $\varrho \in (0,1/2)$.

  \smallskip
  \item[Step 0:] ({\em Initialization.}) Choose a starting point $\x_1 \in \R^n$ and set $\y_1 \leftarrow \x_1$. Calculate $f_1$ and $\bxi_1$. Set $\hat{f}_1 \leftarrow f_1$, $\beta_1 \leftarrow 0$, and $\bxi_1^{mod} \leftarrow \bxi_1$. Set the correction indicator $i_C \leftarrow 0$, an initial matrix $D_1 \leftarrow I$, and the iteration counter $k \leftarrow  1$.
  
  \smallskip
  \item[{Step 1:}] ({\em Serious step initialization.}) Set the aggregate subgradient $\tilde{\bxi}_k \leftarrow \bxi_k$ and the aggregate locality measure $\tilde{\beta}_k \leftarrow 0$. Set the correction indicator $i_{CN} \leftarrow 0$ for consecutive null steps and the serious step index $m \leftarrow k$.

  \smallskip
  \item[{Step 2:}] ({\em Direction finding.}) Compute
  \begin{align}
    \label{eq__d_k_in_LMBM} 
    \bd_k \leftarrow - D_k \tilde{\bxi}_k
  \end{align}
  by using a L-BFGS update if $m=k$ and by using a L-SR1 update, otherwise. Note that for $k=1$ we set $\bd_1 \leftarrow - \bxi_1$.

  \smallskip
  \item[{Step 3:}] ({\em Correction.})  If $- \tilde{\bxi}_k^{\top}
  \bd_k < \varrho \tilde{\bxi}_k^{\top} \tilde{\bxi}_k$ or $i_{CN}=1$, then
  set
  \begin{align}
    \label{eq_d_bound2_in_LMBM}
    &\bd_k \leftarrow \bd_k - \varrho \tilde{\bxi}_k,
  \end{align}
  (i.e., $D_k \leftarrow D_k+\varrho I$) and $i_C \leftarrow1$. Otherwise, set $i_C \leftarrow0$. If
  $i_C=1$ and $m < k$, then set $i_{CN} \leftarrow1$.

  \smallskip
  \item[{Step 4:}] ({\em Stopping criterion.})  Set
  \begin{align}
    \label{eq_w_k_in_LMBM}
    &w_k \leftarrow - \tilde{\bxi}_k^{\top} \bd_k + 2 \tilde{\beta}_k.
  \end{align}
  If $w_k < \varepsilon$, then {\bf stop} with
  $\x_k$ as the final solution.

  \smallskip
  \item[{Step 5:}] ({\em Auxiliary point.}) Using previously computed bundle elements, calculate the stepsize $t_k \in [t_{min},1]$. If $\norm{\bd_k}>C$, then set the scaled direction vector as $\bd_k \leftarrow \frac{C}{\norm{\bd_k}}\bd_k$. Set $\y_{k+1} \leftarrow \x_k+t_k \bd_k$. Calculate $f_{k+1}$ and $\bxi_{k+1}$. Set $\bs_k \leftarrow \y_{k+1} - \x_k =t_k \bd_k$.

  \smallskip
  \item[{Step 6:}] ({\em Serious step.})
  If $f_{k+1} - \hat{f}_k \leq -\varepsilon_L t_k w_k$, take a serious step: set $\x_{k+1} \leftarrow\y_{k+1}$, $\beta_{k+1} \leftarrow 0$, $\bu_k \leftarrow \bxi_{k+1} - \bxi_m$, $\hat{f}_{k+1} \leftarrow f_{k+1}$ and $k \leftarrow k+1$, and go to Step 1.
  
  \smallskip
  \item[{Step 7:}] ({\em Null step.})
  Take a null step: set $\x_{k+1} \leftarrow \x_k$ and $\hat{f}_{k+1} \leftarrow \hat{f}_k$. Calculate $\alpha_{k+1}$ and $\eta_{k+1}$ using \eqref{kaava_alpha} and \eqref{kaava_eta}, respectively. Set $\bxi_{k+1}^{mod} \leftarrow \bxi_{k+1}+\eta_{k+1} \bs_k$, $\beta_{k+1} \leftarrow \alpha_{k+1} + \frac{\eta_{k+1}}{2} \|{\bs_k}\|^2$ and $\bu_k \leftarrow \bxi_{k+1}^{mod} - \bxi_m$.

  \smallskip
  \item[{Step 8:}] ({\em Aggregation.}) Determine multipliers
  $\lambda^k_{i} \geq 0$ for all $i \in \{1,2,3\}$, $\sum_{i=1}^3
  \lambda^k_{i}=1$ that minimize the strictly convex function
  \begin{align} 
    \label{eq_quad_prob_in_LMBMinexact}
    \varphi (\lambda_1, \lambda_2, \lambda_3) &=\,\, (\lambda_1 \bxi_m
    + \lambda_2 \bxi_{k+1}^{mod} + \lambda_3 \tilde{\bxi}_k)^{\top} D_k
    (\lambda_1 \bxi_m + \lambda_2 \bxi_{k+1}^{mod} + \lambda_3
    \tilde{\bxi}_k) \\ &\qquad + 2(\lambda_2 \beta_{k+1} +
    \lambda_3 \tilde{\beta}_k)\nonumber,
  \end{align}
  where $D_k$ is calculated by the same updating formula as in Step 2
  and $D_k \leftarrow D_k+\varrho I$ if $i_C=1$.  Set
  \begin{align}
    \label{eq_tilde_xi_in_LMBMinexact}
    \tilde{\bxi}_{k+1} &\leftarrow \lambda^k_{1} \bxi_m + \lambda^k_{2}
    \bxi_{k+1}^{mod} + \lambda^k_{3} \tilde{\bxi}_k \qquad \text{and} \\
    \label{eq_tilde_beta_in_LMBMinexact}
    \tilde{\beta}_{k+1} &\leftarrow \lambda^k_{2} \beta_{k+1} + \lambda^k_{3}
    \tilde{\beta}_k.
  \end{align}
  Set $k \leftarrow k+1$ and go to Step~2.

\end{description}
\end{minipage}
}}
\end{algorithm}

%\begin{remark}
%    In Algorithm \ref{Alg_LMBM_in_LMBM}, after a serious step is taken, we have $\x_k=\y_k$. Since the modified subgradient is computed as $\bxi^{mod}_k=\bxi_k + \eta_k (\y_k-\x_k)$, it follows that $\bxi^{mod}_k=\bxi_k$ at Step 1 of Algorithm \ref{Alg_LMBM_in_LMBM}.
%\end{remark}

%\begin{remark}
%    In Step~8 of Algorithm \ref{Alg_LMBM_in_LMBM}, we have $\bxi_m=\bxi^{mod}_m$ because $\bxi_m$ is the subgradient computed after the last serious step and it remains unchanged during consecutive null steps.
%\end{remark}

\begin{remark}
    To ensure convergence of the method, it is necessary that both the length of the direction vector (see Step~5 in Algorithm~\ref{Alg_LMBM_in_LMBM}) and the matrices $B_i = D_i^{-1}$ for all $i=1,\ldots,k$ (see Step~3 in Algorithm~\ref{Alg_LMBM_in_LMBM}) remain bounded. A matrix is said to be bounded if all its eigenvalues belong to a compact interval that excludes zero. 
    Furthermore, the correction~(\ref{eq_d_bound2_in_LMBM}) can be viewed as adding a positive definite matrix $\varrho I$ to $D_k$, which shifts its eigenvalues away from zero. Hence, this correction regularizes $D_k$ and helps preserve the boundedness of its inverse $B_k$.
\end{remark}

\section{Convergence Analysis}
\label{sec_Convergence_Analysis_in_LMBM}

In this section, we analyze the global convergence properties of {\inexactLMBM} under the following assumptions.

\begin{assumption}
\label{assumption_LLC}
The objective function $f : \R^n \to \R$ is locally Lipschitz continuous.
\end{assumption}

\begin{assumption}
\label{assumption_levelset}
The level set $\mathcal{F}(\x_1)=\{\,\x \in \R^n \mid f(\x)\leq f(\x_1) \,\}$ is bounded. 
\end{assumption}

\begin{assumption}
\label{assumption_eta}
The sequence $\{\eta_k\}$ is bounded.
\end{assumption}

\begin{assumption}
\label{assumption_errors}
The errors $q_k$ and $r_k$ in function and subgradient values, respectively, are bounded meaning that there exist nonnegative error bounds $\bar{q}$ and $\bar{r}$ such that $|q_k| \leq \bar{q}$ and $0\leq r_k \leq \bar{r}$ for all $k$.
\end{assumption}

For a locally Lipschitz continuous objective function, a necessary condition for a local minimum $\x^*$ in the unconstrained case is that $\mathbf{0} \in \partial f(\x^*)$. We call $\x^*$ a \emph{stationary point} (see, e.g., \cite{Cla:1983}). In the inexact setting considered here, exact stationarity cannot in general be guaranteed. Therefore, we study convergence to approximate stationary points.

\begin{definition}
A point $\x_k$ is \emph{approximately stationary} for the function $f$ if $\mathbf{0} \in \partial f(\x_k)+ B_{\bar{r}}$, where $\bar{r}$ is the error bound given in Assumption \ref{assumption_errors}.
\end{definition}

Under Assumptions \ref{assumption_LLC} -- \ref{assumption_errors}, we prove that Algorithm~\ref{Alg_LMBM_in_LMBM} either terminates at an approximate stationary point or generates an infinite sequence $\{\x_k\}$ whose accumulation points satisfy the approximate stationarity condition. In particular, we examine the algorithm in the case $\varepsilon = 0$. The convergence analysis follows the same structure as that of the original LMBM \cite{HaaMieMak:2007}, but the results and their proofs are reformulated and extended to accommodate the inexact setting. Whenever a lemma and its proof coincide with those of the original LMBM, we state only the lemma and omit the proof. Modified lemmas and theorems are stated and proved in full detail.

\begin{lemma} 
\label{lemma_k_th_iteration_in_LMBM}
At the $k$th iteration of Algorithm~\ref{Alg_LMBM_in_LMBM}, we have
\begin{alignat}{3}
\label{lemma_3results}
  &w_k = \tilde{\bxi}_k^{\top} D_k \tilde{\bxi}_k + 2\tilde{\beta}_k, \qquad & &w_k
  \geq 2\tilde{\beta}_k, \qquad &
  &w_k \geq \varrho \norm{\tilde{\bxi}_k}^2,
\end{alignat}
and $\tilde{\beta}_k \geq 0$.
Furthermore, if condition \eqref{cond_SR1_pos_def_in_LMBM} is valid, then
\begin{align}
  \label{uDu-us_ge_0_inexact}
  \bu_k^{\top}(D_k\bu_k-\bs_k)>0.
\end{align}
\end{lemma}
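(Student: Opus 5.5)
The plan is to argue by induction on $k$, treating the three relations in \eqref{lemma_3results} together with $\tilde{\beta}_k \geq 0$ as the inductive statement, and to handle \eqref{uDu-us_ge_0_inexact} separately.

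\textbf{The relations in \eqref{lemma_3results}.} Let $D_k$ denote the matrix actually used at Step~2, including the shift $D_k \leftarrow D_k+\varrho I$ whenever the correction of Step~3 is applied. Then $\bd_k = -D_k\tilde{\bxi}_k$ in all cases. Substituting this into the definition \eqref{eq_w_k_in_LMBM} gives $w_k = \tilde{\bxi}_k^\top D_k\tilde{\bxi}_k + 2\tilde{\beta}_k$, which is the first relation. The scaling in Step~5 happens only after $w_k$ has been computed, so it plays no role here.

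For the third relation, I will use the test in Step~3. If no correction was applied and $i_{CN}=0$, then $-\tilde{\bxi}_k^\top\bd_k \geq \varrho\norm{\tilde{\bxi}_k}^2$ holds directly. If a correction was applied, then the new direction satisfies
\[
-\tilde{\bxi}_k^\top\bd_k = \tilde{\bxi}_k^\top D_k^{old}\tilde{\bxi}_k + \varrho\norm{\tilde{\bxi}_k}^2 \geq \varrho\norm{\tilde{\bxi}_k}^2,
\]
because $D_k^{old}$ is positive definite. This positive definiteness follows from the discussion after \eqref{cond_SR1_pos_def_in_LMBM}, since updates are skipped whenever that condition fails. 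In either case, adding $2\tilde{\beta}_k \geq 0$ yields $w_k \geq \varrho\norm{\tilde{\bxi}_k}^2$. The second relation follows from the first, again by positive definiteness of $D_k$.

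\textbf{The claim $\tilde{\beta}_k \geq 0$.} This holds at $k=1$ and after every serious step, because Step~1 sets $\tilde{\beta}_k = 0$. After a null step, the aggregate $\tilde{\beta}_{k+1}=\lambda_2^k\beta_{k+1}+\lambda_3^k\tilde{\beta}_k$ is a combination with nonnegative weights. Lemma~\ref{lemmabeta} gives $\beta_{k+1}\geq 0$, and the induction hypothesis gives $\tilde{\beta}_k\geq 0$, so $\tilde{\beta}_{k+1}\geq 0$. This is the only point where the inexact setting enters: with negative linearization errors, Lemma~\ref{lemmabeta} is exactly what replaces the classical fact $\alpha_k \geq 0$.

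\textbf{Inequality \eqref{uDu-us_ge_0_inexact}.} I will follow the original LMBM argument purely algebraically, since the modified subgradient only changes how $\bu_k$ is defined. Write $\bs_k = t_k\bd_k$, with $\bd_k = -D_k\tilde{\bxi}_k$ up to the positive scaling of Step~5; the direction is unchanged, so I treat this as $\bs_k = -\tau D_k\tilde{\bxi}_k$ for some $\tau>0$. Then, using symmetry of $D_k$,
\[
\bu_k^\top\bs_k = -\tau\,\tilde{\bxi}_k^\top D_k\bu_k = \tau\,\bd_k^\top\bu_k.
\]
Condition \eqref{cond_SR1_pos_def_in_LMBM} gives $-\bd_k^\top\bu_k < \tilde{\bxi}_k^\top\bs_k$. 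Following \cite{HaaMieMak:2007}, I would expand the quadratic form $(\bu_k + c\,\tilde{\bxi}_k)^\top D_k(\bu_k + c\,\tilde{\bxi}_k) \geq 0$ for a suitable scalar $c$ to bound $\bu_k^\top D_k\bu_k$ from below, and combine this with the inequality above. That should give $\bu_k^\top D_k\bu_k > \bu_k^\top\bs_k$.

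The main obstacle is making this last algebraic step consistent with the step size $t_k$ and the direction scaling in Step~5. My first attempt will be to note that the proof in \cite{HaaMieMak:2007} uses only the symmetry and positive definiteness of $D_k$ together with $\bs_k$ being a positive multiple of $-D_k\tilde{\bxi}_k$, and then to reproduce that argument verbatim.
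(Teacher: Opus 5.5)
Your proposal is correct and matches the paper's proof. The paper gets $\tilde{\beta}_k\ge 0$ from Step~1, Lemma~\ref{lemmabeta} and the nonnegative aggregation weights. It reads the three relations directly off $\bd_k=-D_k\tilde{\bxi}_k$, with $D_k$ replaced by $D_k+\varrho I$ under correction. For \eqref{uDu-us_ge_0_inexact} it simply cites the Cauchy--Schwarz argument of \cite{HaaMieMak:2007}, exactly as you do.

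Your quadratic-form expansion does close that last step. With $c=\tau$ it gives $\bu_k^\top D_k\bu_k \ge 2\bu_k^\top\bs_k+\tau\tilde{\bxi}_k^\top\bs_k > \bu_k^\top\bs_k$ by \eqref{cond_SR1_pos_def_in_LMBM}. If the condition uses the scaled direction, the Step~5 factor in $(0,1]$ is harmless, because the condition also forces $\bu_k^\top\bs_k>0$.
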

\begin{proof} 
  First, we can easily deduce that $\beta_k \geq 0$ for all $k$ based on Lemma \ref{lemmabeta}. Using this fact together with relation (\ref{eq_tilde_beta_in_LMBMinexact}) and Step~1 of
  Algorithm~\ref{Alg_LMBM_in_LMBM}, we conclude that $\tilde{\beta}_k \geq 0$ for all $k$. The relations \eqref{lemma_3results}
  follow directly from (\ref{eq__d_k_in_LMBM})--(\ref{eq_w_k_in_LMBM}). If the
  correction~(\ref{eq_d_bound2_in_LMBM}) is applied, the matrix $D_k$ is replaced by $D_k + \varrho I$. Therefore, the relations \eqref{lemma_3results} remain valid in that case as well.

  It remains to verify that condition~(\ref{cond_SR1_pos_def_in_LMBM}) implies (\ref{uDu-us_ge_0_inexact}). The proof is analogous to the corresponding argument in \cite{HaaMieMak:2007}, with $t_R^k \theta_k$ replaced by $t_k$ and is therefore omitted.
  
%{\color{blue}
%  Next, we prove that condition~(\ref{cond_SR1_pos_def_in_LMBM}) implies (\ref{uDu-us_ge_0_inexact}). Suppose that~(\ref{cond_SR1_pos_def_in_LMBM}) holds. In this case $\tilde{\bxi}_k \ne {\pmb 0}$, since otherwise the equality $-\bd_k^{\top} \bu_k - \tilde{\bxi}_k^{\top} \bs_k = 0$ would hold. Moreover, we obtain
%  \begin{align}
%    \label{cond_SR1_pos_def2_in_LMBMinexact}
%    \bd_k^{\top} \bu_k > -\tilde{\bxi}_k^{\top} \bs_k = t_k \tilde{\bxi}_k^{\top}
%    D_k \tilde{\bxi}_k,
%  \end{align}
%  where $t_k \in [t_{min},1]$ and $t_{min} \in (0,1]$. In addition $t_k \tilde{\bxi}_k^{\top} D_k \tilde{\bxi}_k > 0$, because $D_k$ is positive definite.
  
%  Now we know that
%  $$
%  \bd_k^{\top} \bu_k > -\tilde{\bxi}_k^{\top} \bs_k > 0,
%  $$
%  which implies that the scalar product $\bu_k^{\top}\bs_k$ is positive. Using this fact together with Cauchy's inequality and the relation $\bs_k=t_k \bd_k$, we obtain
%  \begin{align*}
%    (\bu_k^{\top}\bs_k)^2 &= (t_k \tilde{\bxi}_k^{\top} D_k \bu_k)^2 \\
%    &\leq t_k^2\tilde{\bxi}_k^{\top} D_k \tilde{\bxi}_k \bu_k^{\top}
%    D_k \bu_k \\
%    &= t_k \bu_k^{\top} D_k \bu_k (-\bs_k^{\top} \tilde{\bxi}_k).
%  \end{align*}  
%  Applying the inequality (\ref{cond_SR1_pos_def2_in_LMBMinexact}) then gives
%  $$
%  (\bu_k^{\top}\bs_k)^2 < t_k \bu_k^{\top} D_k \bu_k \bd_k^{\top} \bu_k = \bu_k^{\top} D_k \bu_k \bu_k^{\top} \bs_k.
%  $$
%  Dividing by the positive quantity $\bu_k^{\top} \bs_k$ yields $\bu_k^{\top} \bs_k< \bu_k^{\top} D_k \bu_k$, which completes the proof.
%} %end of blue text
\end{proof}

\begin{lemma} 
\label{lemma_lambda_convex_combination_in_LMBM}
Suppose that Algorithm~\ref{Alg_LMBM_in_LMBM} is not terminated before the $k$th iteration and let $m \leq k$ be an index after the latest serious step. Then, there exist numbers $\lambda^{k,j} \geq 0$ for $j =m,\ldots,k$ and $\tilde{\sigma}_k \geq 0$ such that
\begin{align*}
  (\tilde{\bxi}_k,\tilde{\sigma}_k) = \sum_{j=m}^k \lambda^{k,j}
  (\bxi^{mod}_j,\norm{\y_j-\x_j}),\qquad \sum_{j=m}^k \lambda^{k,j} = 1, \quad\text{and} \quad
  \tilde{\beta}_k \geq \frac{\gamma}{2} \tilde{\sigma}_k^2.
\end{align*}
Note, that $\x_j=\x_m$ for $j=m, \ldots, k$.
\end{lemma}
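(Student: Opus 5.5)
Induction on $k$ from the latest serious step index $m$, with $\tilde{\sigma}_k$ defined through the same convex coefficients that express $\tilde{\bxi}_k$; the only nonlinear step is the quadratic inequality, handled by Jensen.

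\emph{Base case $k=m$.} By Step~1, $\tilde{\bxi}_m=\bxi_m$ and $\tilde{\beta}_m=0$. Since $\x_m=\y_m$, we have $\bxi^{mod}_m=\bxi_m$ and $\norm{\y_m-\x_m}=0$. Take $\lambda^{m,m}=1$ and $\tilde{\sigma}_m=0$; then $\tilde{\beta}_m\ge \frac{\gamma}{2}\tilde{\sigma}_m^2$ holds trivially.

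\emph{Inductive step.} Assume the representation holds at iteration $k$ with coefficients $\lambda^{k,j}$ and $\tilde{\sigma}_k$, and that iteration $k$ is a null step, so $m$ is unchanged. Step~8 gives
\[
\tilde{\bxi}_{k+1}=\lambda^k_1\bxi_m+\lambda^k_2\bxi^{mod}_{k+1}+\lambda^k_3\tilde{\bxi}_k .
\]
Using $\bxi_m=\bxi^{mod}_m$, define
\begin{align*}
\lambda^{k+1,m}&=\lambda^k_1+\lambda^k_3\lambda^{k,m},\\
\lambda^{k+1,j}&=\lambda^k_3\lambda^{k,j}\quad (m<j\le k),\\
\lambda^{k+1,k+1}&=\lambda^k_2 .
\end{align*}
These coefficients are nonnegative and sum to $\lambda^k_1+\lambda^k_3+\lambda^k_2=1$. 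Set
\[
\tilde{\sigma}_{k+1}=\sum_{j=m}^{k+1}\lambda^{k+1,j}\norm{\y_j-\x_j}.
\]
The $j=m$ term contributes zero, so this equals $\lambda^k_2\norm{\y_{k+1}-\x_{k+1}}+\lambda^k_3\tilde{\sigma}_k$, and $\tilde{\sigma}_{k+1}\ge 0$.

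\emph{Quadratic inequality.} By \eqref{eq_tilde_beta_in_LMBMinexact}, Lemma~\ref{lemmabeta} applied to $\beta_{k+1}$ (with $\x_{k+1}=\x_k$), and the induction hypothesis,
\[
\tilde{\beta}_{k+1}=\lambda^k_2\beta_{k+1}+\lambda^k_3\tilde{\beta}_k\ \ge\ \frac{\gamma}{2}\Bigl(\lambda^k_2\norm{\y_{k+1}-\x_{k+1}}^2+\lambda^k_3\tilde{\sigma}_k^2\Bigr).
\]
Adding the zero term $\lambda^k_1\cdot 0^2$ turns the right-hand side into a convex combination of squares with weights $\lambda^k_1,\lambda^k_2,\lambda^k_3$. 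By convexity of $t\mapsto t^2$ (Jensen),
\[
\lambda^k_1\cdot 0^2+\lambda^k_2 a^2+\lambda^k_3 b^2\ \ge\ (\lambda^k_2 a+\lambda^k_3 b)^2=\tilde{\sigma}_{k+1}^2,
\]
with $a=\norm{\y_{k+1}-\x_{k+1}}$ and $b=\tilde{\sigma}_k$. This gives $\tilde{\beta}_{k+1}\ge\frac{\gamma}{2}\tilde{\sigma}_{k+1}^2$.

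\emph{Where care is needed.} Two points need attention. First, one must track that $\x_j=\x_m$ for all $j$ in the current block of null steps, so that $\bxi^{mod}_j$ and $\beta_j$ are both measured relative to $\x_m$. Second, $\lambda^k_1$ multiplies a term with zero locality measure, which is handled by the zero term in Jensen as above. If a serious step occurs instead, the new $m$ is $k+1$ and we are back in the base case.
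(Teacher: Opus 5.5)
Your proof is correct and follows essentially the paper's argument: the same recursion for the coefficients $\lambda^{k+1,j}$, the same definition $\tilde{\sigma}_k=\sum_{j=m}^k\lambda^{k,j}\norm{\y_j-\x_j}$, and Lemma~\ref{lemmabeta} combined with convexity of the square. The only difference is bookkeeping: the paper first proves the exact identity $\tilde{\beta}_k=\sum_{j=m}^k\lambda^{k,j}\beta_j$ and applies Jensen once over all $j$, whereas you carry the inequality $\tilde{\beta}_k\ge\frac{\gamma}{2}\tilde{\sigma}_k^2$ through the induction and apply a three-term Jensen at each aggregation step.
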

\begin{proof}
   By the assumption, $m$ corresponds to an iteration index following the most recent serious step defined at Step~1 of Algorithm~\ref{Alg_LMBM_in_LMBM}, meaning that $\x_j=\x_m$ for all $j=m,\ldots,k$. First, we prove the existence of nonnegative coefficients $\lambda^{k,j} \geq 0$ for $j = m,\ldots,k$, such that
  \begin{align}
    \label{eq_convex_combination_in_inexactLMBM}
    (\tilde{\bxi}_k,\tilde{\beta}_k) = \sum_{j=m}^k
    \lambda^{k,j}(\bxi^{mod}_j,\beta_j), \qquad \sum_{j=m}^k \lambda^{k,j} = 1.
  \end{align}
  We proceed by induction. For the base case $k=m$, we simply take $\lambda^{m,m}=1$, since at Step~1 of Algorithm~\ref{Alg_LMBM_in_LMBM} we have $\tilde{\bxi}_m=\bxi^{mod}_m = \bxi_m$ and $\tilde{\beta}_m=0$, while $\beta_m$ was set to zero in Step~6 of the previous iteration (with $\beta_1=0$ at initialization). Thus, the base case holds.
  
  Now, assume $k>m$ and let $i \in \{m,\ldots,k-1\}$. Suppose that (\ref{eq_convex_combination_in_inexactLMBM}) is satisfied when $k$ is replaced by $i$. We define the next set of coefficients as
  \begin{align*}
    &\lambda^{i+1,m}= \lambda^i_1+\lambda^i_3\lambda^{i,m},\\
    &\lambda^{i+1,j}= \lambda^i_3\lambda^{i,j} \quad \text{for } j = m+1,\ldots,i, \qquad \text{and} \\
    &\lambda^{i+1,i+1}= \lambda^i_2,
  \end{align*}
  where the values $\lambda_l^i \geq 0$ for $l \in \{1,2,3\}$ are obtained at Step~8 of Algorithm~\ref{Alg_LMBM_in_LMBM}. Now, we have $\lambda^{i+1,j} \geq 0$ for all $j = m,\ldots,i+1$, and
  \begin{align}\label{equation_lambda}
      \sum_{j=m}^{i+1}\lambda^{i+1,j}= \lambda^i_1+ \lambda^i_3\left(
        \lambda^{i,m} + \sum_{j=m+1}^{i}\lambda^{i,j}\right) + \lambda^i_2=1,
  \end{align}
  because $\sum_{j=m}^{i}\lambda^{i,j}=1$ by the induction assumption and $\sum_{l=1}^3
  \lambda^i_l=1$ (Step~8 of Algorithm~\ref{Alg_LMBM_in_LMBM}). Using (\ref{eq_tilde_xi_in_LMBMinexact}), (\ref{eq_tilde_beta_in_LMBMinexact}), and (\ref{equation_lambda}), we obtain
  \begin{align*}
    (\tilde{\bxi}_{i+1},\tilde{\beta}_{i+1}) &=
    \lambda^i_1(\bxi_{m},0)+\lambda^i_2(\bxi^{mod}_{i+1},\beta_{i+1}) +
    \sum_{j=m}^{i}\lambda^i_3\lambda^{i,j}(\bxi^{mod}_j,\beta_j) \\
    &= \sum_{j=m}^{i+1} \lambda^{i+1,j} (\bxi^{mod}_j,\beta_j),
  \end{align*}
  since we have $\beta_{m}=0$ and $\bxi_m = \bxi_m^{mod}$. Therefore, condition~(\ref{eq_convex_combination_in_inexactLMBM}) holds for $i+1$.

   Finally, we define
   \begin{align*}
    \tilde{\sigma}_k=\sum_{j=m}^k \lambda^{k,j} \norm{\y_j-\x_j}.
   \end{align*}
   From (\ref{eq_convex_combination_in_inexactLMBM}), Lemma~\ref{lemmabeta}, and the convexity of the function $g \rightarrow \frac{\gamma}{2} g^2$ on $\R_+$ for $\gamma > 0$, it follows that
\begin{align*}
    &\frac{\gamma}{2} \tilde{\sigma}_k^2 = \frac{\gamma}{2} \left( \sum_{j=m}^k \lambda^{k,j}
      \norm{\y_j-\x_j} \right)^2 \\
    &\phantom{\frac{\gamma}{2} \tilde{\sigma}_k^2} \leq \sum_{j=m}^k \lambda^{k,j}
    \frac{\gamma}{2}
    \norm{\y_j-\x_j}^2 \\
    &\phantom{\frac{\gamma}{2} \tilde{\sigma}_k^2} \leq \sum_{j=m}^k \lambda^{k,j} \beta_j \\
    &\phantom{\frac{\gamma}{2} \tilde{\sigma}_k^2} = \tilde{\beta}_k.
  \end{align*}
\end{proof}

\begin{lemma} 
\label{lemma_q_in_subdiff_in_LMBM}
Let $\bar{\x} \in \R^n$ be given and suppose that there exist vectors $\bar{\bg}$, $\bar{\bxi}_i$, $\bar{\y}_i$, and numbers $r_i \geq 0$, $\bar{\lambda}_i \geq 0$ for $i=1,\ldots,l$, $l \geq 1$, such that
\begin{align}
\label{eq_g_in_subdiff_in_inexactLMBM}
  &(\bar{\bg},0)=\sum_{i=1}^l
  \bar{\lambda}_i(\bar{\bxi}_i,\norm{\bar{\y}_i-\bar{\x}}), \nonumber\\
  &\bar{\bxi}_i \in \partial f(\bar{\y}_i) + B_{r_i},\quad i=1,\ldots,l,\qquad
  \text{and} \\
  &\sum_{i=1}^l \bar{\lambda}_i = 1.\nonumber
\end{align}
Then $\bar{\bg} \in \partial f(\bar{\x}) + B_{\bar r}$, where $\bar{r} \geq r_i$ for $i=1,\ldots,l$.
\end{lemma}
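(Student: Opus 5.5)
The argument rests on the second coordinate of the first relation in \eqref{eq_g_in_subdiff_in_inexactLMBM}. It gives
\[
0=\sum_{i=1}^l \bar{\lambda}_i \norm{\bar{\y}_i-\bar{\x}},
\]
and every term in this sum is nonnegative. So for each index $i$, either $\bar{\lambda}_i=0$ or $\bar{\y}_i=\bar{\x}$. Let $I=\{i \mid \bar{\lambda}_i>0\}$. This set is nonempty because $\sum_i \bar{\lambda}_i=1$. Dropping the indices with zero weight leaves the representation $\bar{\bg}=\sum_{i\in I}\bar{\lambda}_i\bar{\bxi}_i$ unchanged, and we still have $\sum_{i\in I}\bar{\lambda}_i=1$. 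Moreover, $\bar{\bxi}_i\in\partial f(\bar{\x})+B_{r_i}\subset \partial f(\bar{\x})+B_{\bar r}$ for every $i\in I$, using $\bar{\y}_i=\bar{\x}$ and $r_i\le \bar r$.

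It remains to show that $\partial f(\bar{\x})+B_{\bar r}$ is convex, since then the convex combination $\bar{\bg}$ lies in it. For each $i\in I$, write $\bar{\bxi}_i=\bg_i+\be_i$ with $\bg_i\in\partial f(\bar{\x})$ and $\norm{\be_i}<\bar r$. Then
\[
\bar{\bg}=\sum_{i\in I}\bar{\lambda}_i\bg_i+\sum_{i\in I}\bar{\lambda}_i\be_i .
\]
The first sum lies in $\partial f(\bar{\x})$ because the Clarke subdifferential is convex. For the second sum, the triangle inequality gives $\norm{\sum_{i\in I}\bar{\lambda}_i\be_i}\le\sum_{i\in I}\bar{\lambda}_i\norm{\be_i}<\bar r$. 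Hence $\bar{\bg}\in\partial f(\bar{\x})+B_{\bar r}$.

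The only step needing care is the degenerate case $\bar r=0$. There $B_0$ is empty as an open ball, so the statement has to be read with $B_{r_i}$ interpreted as $\{\mathbf 0\}$ or as the closed ball, which corresponds to exact subgradients. In that reading the argument reduces to plain convexity of $\partial f(\bar{\x})$. With closed balls the same estimate works using $\le$ in place of $<$. No limiting argument or upper semicontinuity of $\partial f$ is needed, because all points $\bar{\y}_i$ carrying positive weight coincide exactly with $\bar{\x}$.
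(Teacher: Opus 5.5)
Your proof is correct and follows the paper's argument essentially step for step: restrict to the indices with $\bar{\lambda}_i>0$, use the vanishing second coordinate to get $\bar{\y}_i=\bar{\x}$, split each $\bar{\bxi}_i$ into a Clarke subgradient plus an error term, and conclude with convexity of $\partial f(\bar{\x})$ and the triangle inequality. Your handling of strict versus non-strict inequalities for the open ball $B_{\bar r}$ is a small improvement. The paper writes $\norm{\be_i}\le r_i$ and then concludes membership in the open ball $B_{\bar r}$ from a non-strict bound, whereas your strict inequality makes that step valid.
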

\begin{proof}
Let $\mathcal{I}=\{\,i \mid 1 \leq i \leq l,\,\, \bar{\lambda}_i>0\,\}$. By (\ref{eq_g_in_subdiff_in_inexactLMBM}) we have $$
\bar{\y}_i = \bar{\x} \quad \text{and} \quad
\bar{\bxi}_i \in \partial f(\bar{\x})+B_{r_i}
$$
for all $i \in \mathcal{I}$. Write $\bar{\bxi}_i=\bg_i+\be_i$, where $\bg_i \in \partial f(\bar{\x})$ and $\be_i \in B_{r_i}$, meaning that $\norm{\be_i}\leq r_i$. Therefore,
  \begin{align*}
    &\bar{\bg}=\sum_{i \in \mathcal{I}} \bar{\lambda}_i \bar{\bxi}_i = \sum_{i \in \mathcal{I}} \bar{\lambda}_i \bg_i + \sum_{i \in \mathcal{I}} \bar{\lambda}_i \be_i,\\
    &\bar{\lambda}_i>0,\qquad\text{for } i \in \mathcal{I},\qquad \text{and} \\
    &\sum_{i \in \mathcal{I}} \bar{\lambda}_i=1.
  \end{align*}
  By the convexity of $\partial f(\bar{\x})$ (see \cite{BagKarMak:2014}), it follows that $\sum_{i \in \mathcal{I}} \bar{\lambda}_i \bg_i \in \partial f(\bar{\x})$. Moreover, by the triangle inequality and the fact that $\norm{\be_i}\leq r_i \leq \bar{r}$ and $\sum_{i \in \mathcal{I}} \bar{\lambda}_i=1$,
  $$\norm{\sum_{i \in \mathcal{I}} \bar{\lambda}_i \be_i} \leq \sum_{i \in \mathcal{I}} \bar{\lambda}_i \norm{\be_i} \leq \sum_{i \in \mathcal{I}} \bar{\lambda}_i \bar{r} = \bar{r},$$
  which implies that $\sum_{i \in \mathcal{I}} \bar{\lambda}_i \be_i \in B_{\bar{r}}$. Thus, $\bar{\bg}\in \partial f(\bar{\x})+B_{\bar{r}}$.
\end{proof}

\begin{theorem} 
\label{theo_terminate}
If Algorithm~\ref{Alg_LMBM_in_LMBM} terminates at the $k$th iteration, then
the point $\x_k$ is approximately stationary for $f$.
\end{theorem}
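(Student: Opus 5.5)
Termination happens in Step~4 with $\varepsilon = 0$, which means $w_k \le 0$; we analyze the algorithm in this case as stated before Lemma~\ref{lemma_k_th_iteration_in_LMBM}. The plan is to translate $w_k = 0$ into $\tilde{\bxi}_k = \mathbf 0$ and $\tilde{\sigma}_k = 0$. We then feed the convex-combination representation of Lemma~\ref{lemma_lambda_convex_combination_in_LMBM} into Lemma~\ref{lemma_q_in_subdiff_in_LMBM} with $\bar{\x} = \x_k$.

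First, by Lemma~\ref{lemma_k_th_iteration_in_LMBM} we have $w_k \ge 2\tilde{\beta}_k \ge 0$ and $w_k \ge \varrho\norm{\tilde{\bxi}_k}^2 \ge 0$. Hence $w_k \le 0$ forces $w_k = 0$, and therefore $\tilde{\beta}_k = 0$ and $\tilde{\bxi}_k = \mathbf 0$, since $\varrho > 0$. Next, let $m \le k$ be the index after the latest serious step. Lemma~\ref{lemma_lambda_convex_combination_in_LMBM} gives weights $\lambda^{k,j} \ge 0$ summing to one, with
\[
(\tilde{\bxi}_k,\tilde{\sigma}_k)=\sum_{j=m}^k \lambda^{k,j}(\bxi^{mod}_j,\norm{\y_j-\x_j})
\quad\text{and}\quad
0=\tilde{\beta}_k\ge \tfrac{\gamma}{2}\tilde{\sigma}_k^2 .
\]
Since $\gamma > 0$, this yields $\tilde{\sigma}_k = 0$. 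Consequently $(\mathbf 0, 0) = \sum_j \lambda^{k,j}(\bxi^{mod}_j, \norm{\y_j - \x_k})$, where $\x_j = \x_k$ for $j = m,\dots,k$.

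The main obstacle is that Lemma~\ref{lemma_q_in_subdiff_in_LMBM} requires the vectors in the combination to lie in $\partial f(\bar{\y}_i) + B_{r_i}$, whereas here they are modified subgradients $\bxi^{mod}_j = \bxi_j + \eta_j(\y_j - \x_k)$. These are not themselves inexact subgradients at $\y_j$. I handle this using the vanishing of $\tilde{\sigma}_k$. For every $j$ with $\lambda^{k,j} > 0$, the sum of nonnegative terms $\sum_j \lambda^{k,j}\norm{\y_j - \x_k} = 0$ forces $\y_j = \x_k$. Then $\bxi^{mod}_j = \bxi_j \in \partial f(\y_j) + B_{r_j}$.

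Two ways to finish are available. One can drop the indices with zero weight, which do not affect the sums. Alternatively, one can apply Lemma~\ref{lemma_q_in_subdiff_in_LMBM} to the family $\bar{\bxi}_j := \bxi_j$, $\bar{\y}_j := \y_j$, with the same weights. This family has the same weighted sum $\mathbf 0$, because the extra term $\sum_j \lambda^{k,j}\eta_j(\y_j - \x_k)$ vanishes termwise on the support. In either case Lemma~\ref{lemma_q_in_subdiff_in_LMBM} applies with $\bar{\bg} = \mathbf 0$, $\bar{\x} = \x_k$, and $r_j \le \bar{r}$ by Assumption~\ref{assumption_errors}. This gives $\mathbf 0 \in \partial f(\x_k) + B_{\bar r}$, so $\x_k$ is approximately stationary.
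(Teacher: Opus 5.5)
Your proposal is correct and follows essentially the same route as the paper. In both, $w_k=0$ forces $\tilde{\bxi}_k=\mathbf 0$ and $\tilde{\beta}_k=0$, hence $\tilde{\sigma}_k=0$; every positively weighted index then has $\y_j=\x_k$ and $\bxi^{mod}_j=\bxi_j$, and Lemma~\ref{lemma_q_in_subdiff_in_LMBM} is applied to the unmodified subgradients $\bxi_j$ with $\bar{\x}=\x_k$, which is exactly your second way of finishing.
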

\begin{proof}
  If Algorithm~\ref{Alg_LMBM_in_LMBM} terminates at Step~4, then the selection $\varepsilon=0$ implies that $w_k=0$. By Lemma~\ref{lemma_k_th_iteration_in_LMBM}, this gives $\tilde{\bxi}_k={\pmb 0}$ and $\tilde{\beta}_k=0$. Furthermore, let $m \leq k$ be the index of the iteration after the latest serious step. Using Lemma~\ref{lemma_lambda_convex_combination_in_LMBM} and denoting $\mathcal{J}=\{j \; | \; m \leq j \leq k, \; \lambda^{k,j}>0\}$, we know that $\tilde{\sigma}_k=0$ and $$(\tilde{\bxi}_k,\tilde{\sigma}_k) = \sum_{j\in \mathcal{J}} \lambda^{k,j}
  (\bxi^{mod}_j,\norm{\y_j-\x_j}),$$
  where $\sum_{j\in \mathcal{J}} \lambda^{k,j} = 1$. These together yield that $\norm{\y_j-\x_j}=0$ for $j\in \mathcal{J}$, meaning that $\y_j=\x_j$ and $\bxi^{mod}_j=\bxi_j \in \partial f(\y_j)+B_{r_j}$. Therefore, 
  $$(\tilde{\bxi}_k,0) = \sum_{j=m}^k \lambda^{k,j} (\bxi_j,\norm{\y_j-\x_k}),$$
  since $\x_j=\x_k$ for $j=m,\ldots,k$ and 
  $\lambda^{k,j}=0$ for $j \in \{m, \ldots, k\}\setminus \mathcal{J}$.
  
  Applying Lemma~\ref{lemma_q_in_subdiff_in_LMBM} with the choices    
  \begin{alignat*}{20}
    &\bar{\x}=\x_k, \qquad\qquad  && l=k-m+1, \quad\qquad &&& \bar{\bg}=\tilde{\bxi}_k, \quad\qquad &\quad r_i = r_{i+m-1},\\
    &\bar{\bxi}_i=\bxi_{i+m-1}, &&\bar{\y}_i=\y_{i+m-1}, &&&\bar{\lambda}_i=\lambda^{k,i+m-1}
  \end{alignat*}
  for $i=1,\ldots ,l$, it follows that ${\pmb 0}=\tilde{\bxi}_k \in \partial f(\x_k) + B_{\bar{r}}$. Consequently, $\x_k$ is approximately stationary for $f$.
\end{proof}

From this point onward, we assume that Algorithm~\ref{Alg_LMBM_in_LMBM} continues without termination. In other words, $w_k>0$ for all $k$.

%\begin{lemma} 
%\label{lemma_w_k->0->q_k->0_in_LMBM}
%Let the stopping parameters $w_k$ and $q_k$ of Algorithm~\ref{Alg_LMBM_in_LMBM} be defined by (\ref{eq_w_k_in_LMBM}) and (\ref{eq_q_k_in_LMBM}), respectively. Then
%\begin{align*}
%  \{q_k\} \rightarrow 0 \qquad \text{if and only if} \qquad \{w_k\} \rightarrow 0.
%\end{align*}
%\end{lemma}
%
%\begin{proof}
%  See the proof of Lemma 5 in \cite{HaaMieMak:2007}.

%  {\color{blue}
%  The condition $\{q_k\} \rightarrow 0$ implies $\{\tilde{\bxi}_k\} \rightarrow {\pmb 0}$ and $\{\tilde{\beta}_k\} \rightarrow 0$ by Lemma~\ref{lemma_k_th_iteration_in_LMBM} and, thus, we have $\{w_k\} \rightarrow 0$.
  
%  On the other hand, by Lemma~\ref{lemma_k_th_iteration_in_LMBM} we have $w_k \geq 2\tilde{\beta}_k \geq 0$ and $w_k \geq \varrho \norm{\tilde{\bxi}_k}^2$ for some correction parameter $\varrho \in (0,1/2)$. Therefore, $\{w_k\} \rightarrow 0$ implies $\{\tilde{\beta}_k\} \rightarrow 0$ and $\{\tilde{\bxi}_k\} \rightarrow {\pmb 0}$ and, thus, also $\{q_k\} \rightarrow 0$.
%}
%\end{proof}

%Based on Lemma~\ref{lemma_w_k->0->q_k->0_in_LMBM}, it is sufficient from this point onward to focus only on the stopping parameter $w_k$.

\begin{lemma} 
\label{lemma_boundedness_and_stationary_in_LMBM}
Sequences $\{\x_k\}$, $\{\y_k\}$, $\{\bxi_k\}$, $\{\bxi^{mod}_k\}$, $\{\eta_k\}$ and $\{r_k\}$ are bounded. If there exist a point $\bar{\x} \in \R^n$ and an infinite set $\K
\subset \{1,2,\ldots\}$ such that $\{\x_k\}_{k \in \K} \rightarrow \bar{\x}$ and 
$\{w_k\}_{k \in \K} \rightarrow 0$, then ${\pmb 0} \in \partial f(\bar{\x}) + B_{\bar{r}}$, where $\bar{r}$ is the error bound defined in Assumption \ref{assumption_errors}.
\end{lemma}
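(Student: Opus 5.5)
The plan has two parts: first boundedness of the sequences, then a Carathéodory-plus-limit argument built on Lemma~\ref{lemma_lambda_convex_combination_in_LMBM} and Lemma~\ref{lemma_q_in_subdiff_in_LMBM}.

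\emph{Boundedness.} The basic points change only at serious steps. Each serious step satisfies $\hat f_{k+1}\le \hat f_k-\varepsilon_L t_k w_k<\hat f_k$, so $\hat f_k\le \hat f_1$ for all $k$. Since $\hat f_k=f(\x_k)-q_m$ and $|q_j|\le\bar q$, this gives $f(\x_k)\le f(\x_1)+2\bar q$. To conclude that $\{\x_k\}$ is bounded I need the enlarged level set $\{\x\mid f(\x)\le f(\x_1)+2\bar q\}$ to be bounded. Assumption~\ref{assumption_levelset} does not give this directly for nonconvex $f$, so I will either read Assumption~\ref{assumption_levelset} in this inexact sense or state that I am doing so; I expect the authors to argue essentially this way.

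Given bounded $\{\x_k\}$, the auxiliary points satisfy $\y_{k+1}=\x_k+t_k\bd_k$ with $t_k\le1$ and $\norm{\bd_k}\le C$ by the scaling in Step~5, so $\{\y_k\}$ is bounded. On the bounded set containing all $\y_k$, $f$ has some Lipschitz constant $L$ by Assumption~\ref{assumption_LLC}. Hence $\partial f(\y_k)\subset \{\norm{\cdot}\le L\}$, and $\norm{\bxi_k}\le L+\bar r$ by Assumption~\ref{assumption_errors}. Boundedness of $\{\eta_k\}$ is Assumption~\ref{assumption_eta}, and boundedness of $\{r_k\}$ is Assumption~\ref{assumption_errors}. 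Then $\norm{\bxi^{mod}_k}\le\norm{\bxi_k}+\eta_k\norm{\y_k-\x_k}$ is bounded.

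\emph{Stationarity.} Let $\{\x_k\}_{k\in\K}\to\bar\x$ and $\{w_k\}_{k\in\K}\to0$.
\begin{enumerate}
\item By Lemma~\ref{lemma_k_th_iteration_in_LMBM}, $\tilde\bxi_k\to\mathbf 0$ and $\tilde\beta_k\to0$ along $\K$. Lemma~\ref{lemma_lambda_convex_combination_in_LMBM} then gives $\tilde\sigma_k\to0$.
\item Lemma~\ref{lemma_lambda_convex_combination_in_LMBM} writes $(\tilde\bxi_k,\tilde\sigma_k)$ as a convex combination of the vectors $(\bxi^{mod}_j,\norm{\y_j-\x_k})\in\R^{n+1}$, using $\x_j=\x_k$ for $m\le j\le k$. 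By Carathéodory's theorem I can reduce this to at most $l=n+2$ terms, with indices $j_{k,1},\dots,j_{k,l}$ and weights $\lambda_{k,i}$.
\item All ingredients are bounded, so I pass to a further subsequence of $\K$ along which $\lambda_{k,i}\to\bar\lambda_i$, $\y_{j_{k,i}}\to\bar\y_i$, $\bxi_{j_{k,i}}\to\bar\bxi_i$, and $\eta_{j_{k,i}}(\y_{j_{k,i}}-\x_k)\to \bar\bp_i$. In the limit, $(\mathbf 0,0)=\sum_i\bar\lambda_i(\bar\bxi_i+\bar\bp_i,\norm{\bar\y_i-\bar\x})$ and $\sum_i\bar\lambda_i=1$.
\item For each $i$ with $\bar\lambda_i>0$, the second component forces $\bar\y_i=\bar\x$. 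Then $\norm{\y_{j_{k,i}}-\x_k}\to0$, and boundedness of $\eta$ gives $\bar\bp_i=\mathbf 0$.
\item For the same $i$, write $\bxi_{j_{k,i}}=\bg_{k,i}+\be_{k,i}$ with $\bg_{k,i}\in\partial f(\y_{j_{k,i}})$ and $\norm{\be_{k,i}}\le\bar r$. Passing to a further subsequence, $\bg_{k,i}\to\bar\bg_i$ and $\be_{k,i}\to\bar\be_i$. The Clarke subdifferential has closed graph (it is upper semicontinuous), so $\bar\bg_i\in\partial f(\bar\x)$, and $\norm{\bar\be_i}\le\bar r$.
\item Applying the convex-combination argument of Lemma~\ref{lemma_q_in_subdiff_in_LMBM} to the indices with $\bar\lambda_i>0$ yields $\mathbf 0\in\partial f(\bar\x)+\bar B_{\bar r}$, where $\bar B_{\bar r}$ denotes the closed ball.
\end{enumerate}

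The main obstacle is this limiting step. The error terms only converge into the closed ball, while $B_{\bar r}$ was defined as the open ball. I will either interpret approximate stationarity with the closed ball of radius $\bar r$, or note that the distance from $\mathbf 0$ to $\partial f(\bar\x)$ is at most $\bar r$. A further point of care is that the modification term $\eta(\y-\x)$ vanishes only for indices with positive limiting weight. That is why it must be removed through $\tilde\sigma_k\to0$ and Assumption~\ref{assumption_eta}, and not through an exact identity as in Theorem~\ref{theo_terminate}.
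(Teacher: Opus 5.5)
Your proposal is correct and follows the paper's proof essentially step for step. Boundedness of $\{\x_k\}$ comes from monotonicity of $\{\hat f_k\}$ plus the level-set assumption, of $\{\y_k\}$ from $\norm{\bd_k}\le C$, and of $\{\bxi_k\}$ from local boundedness of $\partial f$; the stationarity part uses Carath\'eodory with $n+2$ terms, a convergent subsequence, $\tilde\sigma_k\to 0$ forcing the limiting auxiliary points with positive weight to coincide with $\bar\x$, and finally Lemma~\ref{lemma_q_in_subdiff_in_LMBM}.

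Both of your caveats are genuine points the paper passes over silently. Since $\hat f_k=f(\x_k)-q_m$, monotonicity only gives $f(\x_k)\le f(\x_1)+2\bar q$, so the paper's claim $\{\x_k\}\subset\mathcal{F}(\x_1)$ really needs the enlarged level-set reading you adopt. Likewise, the limiting error terms only land in the closed ball, which the paper tacitly uses anyway (it reads $\be_i\in B_{r_i}$ as $\norm{\be_i}\le r_i$ in Lemma~\ref{lemma_q_in_subdiff_in_LMBM}); your closed-graph argument supplies the justification that the paper compresses into one sentence.
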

\begin{proof}
  Since the sequence $\{\hat{f}_k\}$ is monotone due to the sufficient descent criterion \eqref{eq_serious_descent_criterion_in_LMBM}, the sequence $\{\x_k\}$ belongs to the level set $\mathcal{F}(\x_1)$. Together with Assumption \ref{assumption_levelset} this implies that $\{\x_k\}$ is bounded. The scaling of the direction vector in Step~5 of Algorithm~\ref{Alg_LMBM_in_LMBM}, whenever necessary, guarantees that always $\norm{\bd_k} \leq C$. The auxiliary point is given by $\y_{k+1} = \x_k + t_k \bd_k$ and a stepsize $t_k \in [t_{min},1]$, where $t_{min} \in (0,1]$. Therefore, $$
  \norm{\y_{k+1} - \x_{k}} = t_k \norm{\bd_k} \leq \norm{\bd_k} \leq C.
  $$
  As a result, the sequence $\{\y_k\}$ remains bounded as well.

  The sequences $\{\eta_k\}$ and $\{r_k\}$ are bounded by Assumptions \ref{assumption_eta} and \ref{assumption_errors}. Moreover, by Assumption \ref{assumption_LLC}, $\partial f$ is locally bounded and upper semicontinuous (see, e.g.,~\cite{MakNei:1992}). This, together with the fact $\bxi_k \in \partial f(\y_k) + B_{r_k}$ and the boundedness of $\{\y_k\}$ and $\{r_k\}$, implies that the sequence $\{\bxi_k\}$ is bounded. Since $\bxi^{mod}_k=\bxi_k + \eta_k (\y_k - \x_k)$, and we already know that $\{\bxi_k\}$, $\{\eta_k\}$, $\{\y_k\}$ and $\{\x_k\}$ are bounded, it follows that also $\{\bxi^{mod}_k\}$ is bounded.
  
  Let
  \begin{align*}
    \mathcal{I}=\{1,\ldots,n+2\}
  \end{align*}
  and $m$ denote the index of the iteration after the latest serious step.
  Using the relation $\bxi^{mod}_k=\bxi_k + \eta_k (\y_k - \x_k)$, where $\bxi_k \in \partial f(\y_k)+B_{r_k}$ for all $k \geq 1$, together with Lemma~\ref{lemma_lambda_convex_combination_in_LMBM}, Step~1 of Algorithm~\ref{Alg_LMBM_in_LMBM}, and Carath\'eodory's theorem (see, e.g.,~\cite{HirLem:1993a}), we deduce that there exist vectors $\y_{k,i}$ and $\bxi^{mod}_{k,i}$, and scalars $\lambda^{k,i} \geq 0$ and
  $\tilde{\sigma}_k$ for $i \in \mathcal{I}$ and $k \geq m$, satisfying
  \begin{align}
    \label{eq_boundedness_and_stationary_in_LMBMinexact}
    &(\tilde{\bxi}_k,\tilde{\sigma}_k) =\sum_{i \in \mathcal{I}} \lambda^{k,i}
    (\bxi^{mod}_{k,i},\norm{\y_{k,i}-\x_k}),\nonumber \\
    &\bxi^{mod}_{k,i}=\bxi_{k,i} + \eta_{k,i} (\y_{k,i} - \x_{k}),\\
    & \bxi_{k,i} \in \partial f(\y_{k,i})+B_{r_{k,i}}, \quad\text{and} \nonumber \\
    &\sum_{i \in \mathcal{I}} \lambda^{k,i} = 1,\nonumber \intertext{with}
    &(\y_{k,i},\bxi^{mod}_{k,i}) \in \{(\y_j,\bxi_j)\mid j=m,\ldots,k\}.\nonumber
  \end{align}
    
  Because $\{\y_k\}$ is bounded, there exist points $\y_i^*$ for $i \in \mathcal{I}$ and an infinite set $\K_0 \subset \K$ such that $\{\y_{k,i}\}_{k \in \K_0} \rightarrow \y_i^*$ for $i \in \mathcal{I}$. The boundedness of the sequences $\{\bxi_k\}$, $\{\lambda^{k,i}\}$, $\{\eta_{k}\}$ and $\{r_{k}\}$ ensures the existence of vectors
  $\bxi_i^* \in \partial f(\y_i^*)+B_{r_i^*}$, scalars $\lambda_i^*$, $\eta_i^*$ and $r_i^*$ for $i \in \mathcal{I}$, together with an infinite set $\K_1 \subset \K_0$ such that $\{\bxi_{k,i}\}_{k \in \K_1}
  \rightarrow \bxi_i^*$, $\{\lambda^{k,i}\}_{k \in \K_1} \rightarrow
  \lambda_i^*$, $\{\eta_{k,i}\}_{k \in \K_1}
  \rightarrow \eta_i^*$ and $\{r_{k,i}\}_{k \in \K_1}
  \rightarrow r_i^*$ for $i \in \mathcal{I}$. In addition, since $\{\x_k\}_{k \in \K} \rightarrow \bar{\x}$, it also holds that $\{\x_k\}_{k \in \K_1} \rightarrow \bar{\x}$. Now, we know that $\bxi^{mod}_{k,i}=\bxi_{k,i} + \eta_{k,i} (\y_{k,i} - \x_{k})$ and all its components converge. Thus, $\bxi^{mod}_{k,i}$ converges to $\bxi^{mod,*}_{i}=\bxi_{i}^* + \eta_{i}^* (\y_{i}^* - \bar{\x})$.
  
  Since $\{w_k\}_{k \in \K} \rightarrow 0$,
  Lemma~\ref{lemma_k_th_iteration_in_LMBM} together with
  Lemma~\ref{lemma_lambda_convex_combination_in_LMBM} implies
  $$
  \{\tilde{\bxi}_k\}_{k \in \K} \rightarrow {\pmb 0}, \qquad
  \{\tilde{\beta}_k\}_{k \in \K} \rightarrow 0, \qquad \text{and}\qquad
  \{\tilde{\sigma}_k\}_{k \in \K} \rightarrow 0.
  $$
  Thus, these sequences converge also in the subset $\mathcal{K}_1$ and from (\ref{eq_boundedness_and_stationary_in_LMBMinexact}) we obtain
 \begin{align*}
    & (\mathbf{0},0) =\sum_{i \in \mathcal{I}} \lambda^*_i
    (\bxi^{mod,*}_{i},\norm{\y_{i}^*-\bar{\x}}), \\
    & \bxi^{mod,*}_{i}=\bxi_{i}^* + \eta_{i}^* (\y_{i}^* - \bar{\x}), \\
    & \bxi_{i}^* \in \partial f(\y_{i}^*)+B_{r_{i}^*}, \\
    & \lambda_i^*\geq0 \quad\text{for } i \in \mathcal{I}, \quad \text{and} \\
    & \sum_{i \in \mathcal{I}} \lambda_i^* = 1.
 \end{align*}
  If $\lambda^{i*}>0$, then $\norm{\y_{i}^*-\bar{\x}}=0$. Therefore, $\y_i^{*}=\bar{\x}$, $\bxi^{mod,*}_i = \bxi_i^*$, and $\bxi^{mod,*}_i \in \partial f(\bar{\x}) + B_{r_i^*}$, where $r_i^*\leq \bar{r}$ by Assumption \ref{assumption_errors}.
  
  Finally, letting $k \in \K_1$ approach infinity in (\ref{eq_boundedness_and_stationary_in_LMBMinexact}) and applying
  Lemma~\ref{lemma_q_in_subdiff_in_LMBM} with
  \begin{alignat*}{5}
    & l=n+2, \qquad && \bar{\bg}={\pmb 0}, \qquad &&&\bar{\bxi}_i=\bxi_i^*,\\
    & \bar{\y}_i=\y_i^*, &&
    \bar{\lambda}_i=\lambda_i^*, \qquad &&& r_i=r_i^*,
  \end{alignat*}
  we conclude that ${\pmb 0} \in \partial f(\bar{\x}) + B_{\bar{r}}$, which completes the proof.
\end{proof}

\begin{lemma} 
\label{lemma_D_(k+1)_leq_D_k_and_TrD_(k+1)_leq_n_in_LMBM}
Suppose that the number of serious steps is finite and the last serious step occurred at the iteration $m-1$.  Then there exists a number $k^* \geq m$,
such that
\begin{align}
  \label{eq_D_(k+1)_leq_D_k_in_LMBM}
  &\tilde{\bxi}_{k+1}^{\top} D_{k+1} \tilde{\bxi}_{k+1} \leq \tilde{\bxi}_{k+1}^{\top}
  D_{k} \tilde{\bxi}_{k+1} \qquad \text{and}\\
  \label{eq_TrD_(k+1)_leq_n_in_LMBM}
  &\tr(D_{k}) < \frac{3}{2}n 
\end{align}
for all $k \geq k^*$, where $\tr(D_k)$ denotes the trace of matrix $D_k$.
\end{lemma}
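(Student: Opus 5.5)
This lemma is the inexact counterpart of Lemma~6 in \cite{HaaMieMak:2007}. Since it concerns only the linear-algebraic behaviour of the limited memory SR1 update, I would follow that proof. The modifications enter only through the correction vectors. After the last serious step, every iteration $k\geq m$ is a null step, so $D_{k+1}$ is either $D_k$ (update skipped) or is produced by the L-SR1 formula from the pairs $(\bs_k,\bu_k)$ with $\bs_k=t_k\bd_k$ and $\bu_k=\bxi^{mod}_{k+1}-\bxi_m$. The structural facts the argument needs are:
\begin{itemize}
\item positive definiteness of $D_k$;
\item inequality \eqref{uDu-us_ge_0_inexact} of Lemma~\ref{lemma_k_th_iteration_in_LMBM}, valid whenever \eqref{cond_SR1_pos_def_in_LMBM} holds;
\item the fact that after a null step the correction indicator $i_{CN}$ can only switch from $0$ to $1$ and then stays $1$ until the next serious step.
\end{itemize}
All three remain available in the inexact setting.

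First I would handle the correction indicator. If $i_{CN}=1$ from some iteration on, then $D_k$ is permanently shifted by $\varrho I$ in Step~3. Otherwise no correction occurs for large $k$. Either way, there is an index $k^*\geq m$ after which $D_k$ is generated by the same rule in both Step~2 and Step~8. This matters because the quadratic $\varphi$ in \eqref{eq_quad_prob_in_LMBMinexact} uses exactly that $D_k$, and I pick $k^*$ to be such an index.

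Next I would establish \eqref{eq_D_(k+1)_leq_D_k_in_LMBM}. For a skipped update it is trivial, since $D_{k+1}=D_k$. For an actual L-SR1 update, the SR1 formula gives a rank-one change
\[
D_{k+1}=D_k-\frac{(D_k\bu_k-\bs_k)(D_k\bu_k-\bs_k)^\top}{\bu_k^\top(D_k\bu_k-\bs_k)}.
\]
By \eqref{uDu-us_ge_0_inexact} the denominator is positive, so $D_{k+1}\preceq D_k$ and the inequality holds for every vector, in particular for $\tilde{\bxi}_{k+1}$. In the genuinely limited memory setting $D_{k+1}$ is rebuilt from the last $\hat m_c$ pairs, not obtained by a single rank-one correction of $D_k$. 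There I would invoke the compact-representation argument of \cite{HaaMieMak:2007}, which shows the same monotonicity along $\tilde{\bxi}_{k+1}$ using the fact that the stored pairs all share the base point $\x_m$ and subgradient $\bxi_m$.

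For the trace bound \eqref{eq_TrD_(k+1)_leq_n_in_LMBM}, the L-SR1 matrices after the serious step start from a scaled identity $\vartheta I$ with $\vartheta\leq 1$ (possibly plus $\varrho I$, with $\varrho<1/2$), and each SR1 correction only subtracts positive semidefinite terms. Hence $\tr(D_k)\leq n(1+\varrho)<\tfrac32 n$ for $k\geq k^*$.

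The main obstacle is making the limited-memory rank-one monotonicity rigorous when old pairs drop out of the memory. I would first try to copy the original proof verbatim, checking only that it uses no property of $\bu_k$ beyond \eqref{uDu-us_ge_0_inexact} and the validity of \eqref{cond_SR1_pos_def_in_LMBM}, both of which are already established for modified subgradients.
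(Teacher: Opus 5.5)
Your proposal takes the paper's route. The paper's entire proof is a pointer to the proof of Lemma~7 (not Lemma~6) in \cite{HaaMieMak:2007}. Your observation that the inexactness enters only through $\bu_k=\bxi^{mod}_{k+1}-\bxi_m$, for which \eqref{uDu-us_ge_0_inexact} is re-established in Lemma~\ref{lemma_k_th_iteration_in_LMBM}, is exactly the implicit justification for reusing that proof.

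Your own sketches do not carry the argument by themselves, for two reasons:
\begin{itemize}
\item The single rank-one SR1 correction of $D_k$ and the trace bound from successive corrections of $\vartheta I$ started after the serious step do not describe the compact L-SR1 formula. That formula rebuilds $D_k$ from $I$ using the $\hat m_c$ most recent pairs, which include pairs from before $m$ and drop old ones.
\item In the corrected regime, \eqref{uDu-us_ge_0_inexact} holds for $D_k+\varrho I$, not for the uncorrected L-SR1 matrix whose SR1 denominator you need.
\end{itemize}
So, as in the paper, the cited proof carries the weight.
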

\begin{proof}  
  See the proof of Lemma 7 in \cite{HaaMieMak:2007}.
\end{proof}

\begin{lemma}
\label{lemma_minQ_leq_w_in_LMBM}
Suppose that there exist vectors $\bp$ and $\bg$ together with numbers $w \geq
0$, $\alpha \geq 0$, $\beta \geq 0$, $M \geq 0$, and $c \in (0,1/2)$ such that
\begin{align*}
  w=\norm{\bp}^2+2\alpha, \quad \beta+\bp^{\top} \bg \leq cw, \quad \text{and}\quad
  \max \,\{\norm{\bp},\norm{\bg},\sqrt{\alpha}\} \leq M.
\end{align*}
Let $Q:[0,1]\rightarrow \R$ be such that
\begin{align*}
  &Q(\lambda)=\norm{\lambda \bg + (1-\lambda) \bp}^2 + 2(\lambda \beta +
  (1-\lambda) \alpha) \qquad \text{and}\\
  &b=(1-2c)/4M.  \intertext{Then} &\min \,\{Q(\lambda) \mid \lambda \in
  [0,1]\}\leq w -w^2 b^2.
\end{align*}
\end{lemma}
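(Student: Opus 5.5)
Since $Q(\lambda)$ is a convex quadratic in $\lambda$, I will not minimize it exactly. Instead I will expand it and evaluate it at a suitably chosen small $\lambda$. Write
\begin{align*}
Q(\lambda) = \norm{\bp}^2 + 2\lambda\,\bp^\top(\bg-\bp) + \lambda^2\norm{\bg-\bp}^2 + 2\alpha + 2\lambda(\beta-\alpha),
\end{align*}
so that $Q(0)=w$. The linear coefficient is $2(\bp^\top\bg - \norm{\bp}^2 + \beta - \alpha)$. Using $\beta+\bp^\top\bg\le cw$ and $\norm{\bp}^2+\alpha \ge \tfrac12(\norm{\bp}^2+2\alpha)=w/2$, this coefficient is at most $2(cw - w/2) = -(1-2c)w$. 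For the quadratic coefficient, $\norm{\bg-\bp}^2\le (\norm{\bg}+\norm{\bp})^2\le 4M^2$. Hence
\begin{align*}
Q(\lambda)\le w - \lambda(1-2c)w + 4M^2\lambda^2 \qquad\text{for all } \lambda\in[0,1].
\end{align*}

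The right-hand side is minimized at $\lambda^* = (1-2c)w/(8M^2)$, where it equals $w - (1-2c)^2w^2/(16M^2) = w - w^2b^2$ with $b=(1-2c)/4M$. This is exactly the claimed bound, so the remaining task is to check that $\lambda^*\in[0,1]$. Since $w=\norm{\bp}^2+2\alpha\le M^2+2M^2=3M^2$ and $1-2c<1$, we get $\lambda^*\le 3/8<1$, which settles it. The bound $\alpha\le M^2$ follows from $\sqrt{\alpha}\le M$.

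Two details need care. The first is the degenerate case $M=0$, where $b$ is undefined. In that case $\bp=\bg=\mathbf{0}$ and $\alpha=0$, so $w=0$, and the statement should be read with $w=0$, or we may simply assume $M>0$. The second, which I expect to be the only delicate point, is the sign bookkeeping in the linear coefficient. We need $-\norm{\bp}^2-\alpha\le -w/2$, which uses $\alpha\ge0$. The remaining terms, $\bp^\top\bg+\beta$, are controlled by the hypothesis. No other earlier results are needed, since the argument is pure algebra on a one-dimensional quadratic.
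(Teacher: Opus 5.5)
Your proof is correct and is the same quadratic-majorant argument the paper relies on by citing Lemma 3.5 of \cite{LukVlc:1999}: expand $Q$ about $\lambda=0$, bound the slope coefficient by $-(1-2c)w$ and $\norm{\bg-\bp}^2$ by $4M^2$, then evaluate at $\lambda^*=(1-2c)w/(8M^2)\le 3/8$. One small correction to your commentary: $\norm{\bp}^2+\alpha\ge w/2$ reduces to $\tfrac12\norm{\bp}^2\ge 0$ and does not use $\alpha\ge 0$; that hypothesis enters only through $\sqrt{\alpha}\le M$, which gives $w\le 3M^2$.
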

\begin{proof}
    See the proof of Lemma 3.5 in \cite{LukVlc:1999}.
\end{proof}

\begin{lemma} 
\label{lemma_finite_serious_steps_in_LMBM}
Suppose that the number of serious steps is finite and the last serious step occurred at the iteration $m-1$. Then, the point $\x_{m}$ is approximately stationary for~$f$.
\end{lemma}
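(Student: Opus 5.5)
The plan follows the original LMBM argument (Lemma 8 in \cite{HaaMieMak:2007}): show that $w_k\to 0$ along the tail of null steps, then conclude with Lemma~\ref{lemma_boundedness_and_stationary_in_LMBM}. Since only null steps occur after iteration $m-1$, we have $\x_k=\x_m$ and $\hat f_k=\hat f_m$ for all $k\ge m$. Hence it suffices to prove $w_k\to 0$. Lemma~\ref{lemma_boundedness_and_stationary_in_LMBM}, applied with $\K=\{k\ge m\}$ and the constant sequence $\x_k=\x_m$, then gives ${\pmb 0}\in\partial f(\x_m)+B_{\bar r}$.

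\textbf{Step 1: a one-step decrease.} For $k\ge k^*$ (from Lemma~\ref{lemma_D_(k+1)_leq_D_k_and_TrD_(k+1)_leq_n_in_LMBM}), I would relate $w_{k+1}$ to $w_k$ through Lemma~\ref{lemma_minQ_leq_w_in_LMBM}. Write $W_k=D_k$, or $D_k+\varrho I$ if the correction is active; after finitely many null steps with $i_{CN}=1$ it stays active. Set $\bp=W_k^{1/2}\tilde\bxi_k$, $\bg=W_k^{1/2}\bxi^{mod}_{k+1}$, $\alpha=\tilde\beta_k$ and $\beta=\beta_{k+1}$, so that $w=w_k=\norm{\bp}^2+2\alpha$.

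The required inequality $\beta+\bp^\top\bg\le c w$ comes from Lemma~\ref{lemma_nullstep}. Since $t_k\bd_k=-t_kW_k\tilde\bxi_k$, we have
\begin{equation*}
-\beta_{k+1}-t_k\tilde\bxi_k^\top W_k\bxi^{mod}_{k+1}>-\varepsilon_L t_k w_k .
\end{equation*}
Dividing by $t_k\ge t_{min}$ requires care, because $\beta_{k+1}/t_k\ge\beta_{k+1}$. I would first use $\beta_{k+1}\ge0$ (Lemma~\ref{lemmabeta}) and $t_k\le1$ to get $\beta_{k+1}+t_k\bp^\top\bg<\varepsilon_L t_k w_k$. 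To turn this into the needed form, I would apply Lemma~\ref{lemma_minQ_leq_w_in_LMBM} with a vector scaled by $t_k$, or simply take $t_k=1$ as the basic case, which the paper notes suffices. The scaling complication $\norm{\bd_k}>C$ needs the same treatment, since it effectively shrinks $t_k$. I would handle it by noting $\norm{\bd_k}\le\norm{W_k}\norm{\tilde\bxi_k}$, which is bounded, so one can choose $C$ large or absorb the factor into an effective stepsize bounded below.

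The bound $M$ comes from the boundedness of $\{\bxi^{mod}_k\}$ (Lemma~\ref{lemma_boundedness_and_stationary_in_LMBM}), of $\tilde\beta_k\le w_k/2$, and of $\tr(D_k)<\tfrac32 n$.

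\textbf{Step 2: from the one-step decrease to $w_k\to0$.} Choosing $\lambda_1=0$ in the aggregation problem \eqref{eq_quad_prob_in_LMBMinexact}, its optimal value is at most $\min_\lambda Q(\lambda)\le w_k-w_k^2b^2$. Moreover, $w_{k+1}$ is bounded by this optimal value by \eqref{eq_D_(k+1)_leq_D_k_in_LMBM}, with the $\varrho I$ term preserved. Hence
\begin{equation*}
w_{k+1}\le w_k-b^2w_k^2 \quad\text{for } k\ge k^*,
\end{equation*}
so $\{w_k\}$ is nonincreasing and nonnegative. Summing gives $\sum_k b^2 w_k^2<\infty$, and therefore $w_k\to0$.

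The main obstacle is Step 1: producing the inequality $\beta+\bp^\top\bg\le cw$ with a fixed $c<1/2$ despite the stepsize $t_k\in[t_{min},1]$ and the possible scaling of $\bd_k$. I would first try setting $c=\varepsilon_L$ with rescaled data $\bg$, using $\beta_{k+1}\ge0$. If that fails, I would use a modified $c=\varepsilon_L/t_{min}$, which requires $\varepsilon_L<t_{min}/2$, or adapt the argument of \cite{HaaMieMak:2007} with $t_R^k\theta_k$ replaced by $t_k$.
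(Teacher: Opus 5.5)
Your outline matches the paper's. For $k\ge k^*$ you get $w_{k+1}\le w_k-b^2w_k^2$ from Lemma~\ref{lemma_minQ_leq_w_in_LMBM}, with $\bp,\bg$ the $W_k^{1/2}$-images of $\tilde\bxi_k,\bxi^{mod}_{k+1}$, $\alpha=\tilde\beta_k$ and $\beta=\beta_{k+1}$. You then conclude $w_k\to0$, and your summation argument is a fine substitute for the paper's contradiction with $w_k>\delta$. You finish with Lemma~\ref{lemma_boundedness_and_stationary_in_LMBM}.

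The gap is that you never establish the hypothesis that uses the null-step test, $\beta+\bp^\top\bg\le cw$ with $c<1/2$, and you misread the fact that settles it. Lemma~\ref{lemma_nullstep} gives $\beta_{k+1}+t_k\bp^\top\bg<\varepsilon_L t_kw_k$ directly. The facts $\beta_{k+1}\ge0$ and $t_k\le1$ are needed only in the next step. Divide by $t_k>0$ and use $\beta_{k+1}\le\beta_{k+1}/t_k$:
\[
\beta_{k+1}+\bp^\top\bg\le\frac{\beta_{k+1}}{t_k}+\bp^\top\bg<\varepsilon_L w_k .
\]
So the inequality $\beta_{k+1}/t_k\ge\beta_{k+1}$, which you flag as the difficulty, is exactly what makes the division harmless. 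Hence $c=\varepsilon_L$ works with no rescaling of $\bg$; the paper writes this as $t_k\beta_{k+1}\le\beta_{k+1}$ and then divides by $t_k$.

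Your fallbacks do not prove the lemma:
\begin{itemize}
\item The algorithm chooses $t_k\in[t_{min},1]$, so treating $t_k=1$ covers only a special case.
\item Taking $c=\varepsilon_L/t_{min}$ adds the hypothesis $\varepsilon_L<t_{min}/2$. That is not in the statement, and it fails for the defaults $t_{min}=10^{-12}$, $\varepsilon_L=0.01$.
\item In \cite{HaaMieMak:2007} the null-step inequality is produced by the line search that this method omits, so that argument does not carry over as is.
\end{itemize}

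There are two smaller points. First, your bound $M$ uses $\tilde\beta_k\le w_k/2$, so you need $\{w_k\}$ bounded before the decrease inequality is available. Take it, as the paper does, from $w_{k+1}\le\varphi(\lambda^k_1,\lambda^k_2,\lambda^k_3)\le\varphi(0,0,1)=w_k$ for $k\ge k^*$.

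Second, on the Step~5 scaling: you are right that if $\norm{\bd_k}>C$, the vector entering Lemma~\ref{lemma_nullstep} is $s_k\bd_k$ with $s_k=C/\norm{\bd_k}<1$. The paper's proof passes over this case by writing $\bd_k=-D_k\tilde\bxi_k$. But your remedy does not work as stated. Since $w_k$ is computed in Step~4 before the scaling, the inequality becomes $\beta_{k+1}+t_ks_k\bp^\top\bg<\varepsilon_L t_kw_k$, with no $s_k$ on the right. Treating $t_ks_k$ as an effective stepsize therefore yields only $c=\varepsilon_L/s_k$, which need not be below $1/2$. Nor can the fixed parameter $C$ be enlarged after the fact to exceed a run-dependent bound on $\norm{\bd_k}$.
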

\begin{proof}
  Using (\ref{eq_quad_prob_in_LMBMinexact})--(\ref{eq_tilde_beta_in_LMBMinexact}),
  Lemma~\ref{lemma_k_th_iteration_in_LMBM}, and Lemma~\ref{lemma_D_(k+1)_leq_D_k_and_TrD_(k+1)_leq_n_in_LMBM} we obtain
  \begin{align}
    \label{eq_w_(k+1)_leq_w_k_in_LMBMinexact}
    w_{k+1} &= \tilde{\bxi}_{k+1}^{\top} D_{k+1} \tilde{\bxi}_{k+1} +
    2\tilde{\beta}_{k+1} \nonumber \\
    &\leq \tilde{\bxi}_{k+1}^{\top} D_k \tilde{\bxi}_{k+1} + 2\tilde{\beta}_{k+1} \nonumber\\
    &=\varphi (\lambda^k_1, \lambda^k_2, \lambda^k_3)\\
    &\leq\varphi (0,0,1)\nonumber\\
    &=\tilde{\bxi}_k^{\top} D_k \tilde{\bxi}_k + 2\tilde{\beta}_k \nonumber \\
    &=w_k\nonumber
  \end{align}
  for all $k \geq k^*$, where $k^*$ is defined in Lemma~\ref{lemma_D_(k+1)_leq_D_k_and_TrD_(k+1)_leq_n_in_LMBM}.
  
  For convenience, write $D_k=W_k^{\top} W_k$. With this notation, the function $\varphi$ defined in (\ref{eq_quad_prob_in_LMBMinexact}) can be expressed as
  \begin{align*}
    \varphi (\lambda^k_1, \lambda^k_2, \lambda^k_3)= \norm{\lambda^k_1
      W_k\bxi_m + \lambda^k_2 W_k\bxi_{k+1}^{mod} + \lambda^k_3 W_k\tilde{\bxi}_k}^2
    + 2(\lambda^k_2 \beta_{k+1} + \lambda^k_3 \tilde{\beta}_k).
  \end{align*}
   Relation (\ref{eq_w_(k+1)_leq_w_k_in_LMBMinexact}) implies that the sequences $\{w_k\}$, $\{W_k\tilde{\bxi}_k\}$, and $\{\tilde{\beta}_k\}$ remain bounded. Furthermore, Lemma~\ref{lemma_D_(k+1)_leq_D_k_and_TrD_(k+1)_leq_n_in_LMBM}
  guarantees boundedness of $\{D_k\}$ and $\{W_k\}$, while Lemma~\ref{lemma_boundedness_and_stationary_in_LMBM} ensures that the sequences $\{\y_k\}$, $\{\bxi_k\}$, $\{\bxi_k^{mod}\}$, $\{\eta_k\}$ and $\{r_k\}$ are also bounded. Consequently, the sequence $\{W_k \bxi_{k+1}^{mod}\}$ is bounded as well.
  
  Define
  \begin{align*}
     M=\sup \,\{\norm{W_k \bxi_{k+1}^{mod}},\norm{W_k
      \tilde{\bxi}_k},\sqrt{\tilde{\beta}_k} \mid k \geq k^*\},
  \end{align*}
  and set
  \begin{align*}
     b=(1-2\varepsilon_L)/4M.
  \end{align*}
  Assume first that $w_k>\delta>0$ holds for all $k \geq k^*$. Because
  \begin{align*}
    &\min \,\{\varphi (\lambda_1, \lambda_2, \lambda_3) \mid \lambda_i \geq
    0,\,
    i=1,2,3,\, \sum_{i=1}^3 \lambda_i=1\} \\
    &\leq \min \,\{\varphi(0,\lambda,(1-\lambda)) \mid \lambda \in [0,1]\},
  \end{align*}
  relation (\ref{eq_w_(k+1)_leq_w_k_in_LMBMinexact}) yields
  \begin{align*}
    w_{k+1} \leq \min \,\{\norm{\lambda W_k \bxi_{k+1}^{mod} + (1-\lambda) W_k
      \tilde{\bxi}_k)}^2 + 2(\lambda \beta_{k+1} + (1-\lambda)
    \tilde{\beta}_k) \mid \lambda \in [0,1]\}.
  \end{align*}
  From Lemma~\ref{lemma_k_th_iteration_in_LMBM}, it follows that $w_k=\tilde{\bxi}_k^{\top} D_{k} \tilde{\bxi}_k + 2\tilde{\beta}_k$. Moreover, $\bd_k=-D_k \tilde{\bxi}_k$ (see Step~2 in Algorithm~\ref{Alg_LMBM_in_LMBM}),
  and condition (\ref{cond_null_step_in_LMBMinexact}) implies
  \begin{align*}
    -\beta_{k+1} + t_k \bd_k^{\top} \bxi_{k+1}^{mod} \geq -\varepsilon_L t_k w_k.
  \end{align*}
  When we take into account that $t_k \in [t_{min}, 1]$, where $t_{min} \in (0,1]$, we have
  \begin{align*}
    t_k \beta_{k+1} - t_k \bd_k^{\top} \bxi_{k+1}^{mod} \leq \beta_{k+1} -
    t_k \bd_k^{\top} \bxi_{k+1}^{mod} \leq \varepsilon_L t_k w_k.
  \end{align*}
  
  Hence, Lemma~\ref{lemma_minQ_leq_w_in_LMBM} can be applied with
  \begin{alignat*}{5}
    &\bp=W_k \tilde{\bxi}_k, \qquad && \bg=W_k \bxi_{k+1}^{mod}, \qquad && w=w_k,\\
    &\alpha=\tilde{\beta}_k, && \beta=\beta_{k+1}, && c=\varepsilon_L,
  \end{alignat*}
  which leads to
  \begin{align*}
    w_{k+1}\leq w_k - (w_k b)^2 < w_k - (\delta b)^2
  \end{align*}
  for all $k \geq k^*$. For sufficiently large $k$, this contradicts the assumption $w_k > \delta$. Therefore, using the monotonicity of $w_k$ for $k \geq k^*$, we conclude that $w_k \rightarrow 0$ and $\x_k
  \rightarrow \x_m$.
  
  Finally, Lemma~\ref{lemma_boundedness_and_stationary_in_LMBM}, implies that ${\pmb 0} \in
  \partial f(\x_m) + B_{\bar{r}}$, where $\bar{r}$ is the error bound defined in Assumption \ref{assumption_errors}. Thus $\x_m$ is an approximate stationary point of $f$.
\end{proof}

\begin{theorem} 
\label{theo_cluster_points_are_stationary_in_LMBM}
Every accumulation point of the sequence $\{\x_k\}$ is approximately stationary for $f$.
\end{theorem}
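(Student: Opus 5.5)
Let $\bar{\x}$ be an accumulation point of $\{\x_k\}$ and let $\K$ be an infinite index set with $\{\x_k\}_{k\in\K}\to\bar{\x}$. I will split into two cases according to whether the number of serious steps is finite or infinite, and in both cases reduce to Lemma~\ref{lemma_boundedness_and_stationary_in_LMBM}. That lemma only needs an infinite subset of $\K$ on which $w_k\to 0$.

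\emph{Finite number of serious steps.} Suppose the last serious step occurs at iteration $m-1$. Then $\x_k=\x_m$ for all $k\ge m$, so $\bar{\x}=\x_m$. Lemma~\ref{lemma_finite_serious_steps_in_LMBM} shows directly that $\x_m$ is approximately stationary, which settles this case.

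\emph{Infinitely many serious steps.} Here the plan is the standard descent argument, adapted to the noisy function values.

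\begin{enumerate}
\item Since $\hat{f}_k$ changes only at serious steps and each serious step satisfies $\hat{f}_{k+1}\le \hat{f}_k-\varepsilon_L t_k w_k$, the sequence $\{\hat{f}_k\}$ is nonincreasing.
\item It is also bounded below. We have $\hat{f}_k=f(\x_k)-q_{m(k)}\ge f(\x_k)-\bar{q}$. The points $\x_k$ stay in a bounded set by Lemma~\ref{lemma_boundedness_and_stationary_in_LMBM}, and $f$ is continuous by Assumption~\ref{assumption_LLC}, so $f(\x_k)$ is bounded below.
\item Hence $\hat{f}_k$ converges. Summing the descent inequalities over serious steps gives $\sum_{k\in S}\varepsilon_L t_k w_k<\infty$, where $S$ is the set of serious-step iterations.
\item Because $t_k\ge t_{min}>0$, this yields $w_k\to 0$ along $S$.
\end{enumerate}

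The remaining task is to link the subsequence $\K$ with $S$. I will use the fact that $\x_k$ is constant between serious steps.
\begin{itemize}
\item For each $k\in\K$, let $\ell(k)\le k$ be the index of the last serious step with $\ell(k)<k$. Equivalently, $\x_k=\x_{\ell(k)+1}=\y_{\ell(k)+1}$.
\item The natural subsequence is then the iterations $\ell(k)$ themselves, where $w_{\ell(k)}\to0$.
\item However, $\x_{\ell(k)}\ne\x_k$ in general, since $\x_{\ell(k)}$ is the old center before the step. So I will instead use the fact that $\ell(k)\to\infty$ when infinitely many serious steps occur.
\item Then $\norm{\x_{\ell(k)+1}-\x_{\ell(k)}}=t_{\ell(k)}\norm{\bd_{\ell(k)}}$. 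This step length tends to zero, because $\norm{\bd_j}^2\le \norm{D_j}\,\tilde{\bxi}_j^\top D_j\tilde{\bxi}_j\le \norm{D_j}w_j$.
\end{itemize}
This requires the boundedness of $\{D_k\}$ highlighted in the remark after Algorithm~\ref{Alg_LMBM_in_LMBM}, which the paper treats as a standing requirement. With it, $\{\x_{\ell(k)}\}\to\bar{\x}$ and $\{w_{\ell(k)}\}\to 0$. Lemma~\ref{lemma_boundedness_and_stationary_in_LMBM}, applied with index set $\{\ell(k):k\in\K\}$, then gives $\mathbf{0}\in\partial f(\bar{\x})+B_{\bar r}$.

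An alternative that avoids the step length: at iteration $\ell(k)+1$ a serious-step initialization sets $\tilde{\bxi}=\bxi_{\ell(k)+1}$ and $\tilde{\beta}=0$. Then $w_{\ell(k)+1}$ is comparable to $\norm{\bxi_{\ell(k)+1}}^2$, and one would need to show that this quantity tends to zero. That is harder, so I prefer the first route.

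\emph{Main obstacle.} The delicate step is this index shift between $\K$ and the serious-step iterations where $w\to0$. In particular, it relies on $\norm{\bd_j}$ being controlled by $w_j$, which needs the bounded-metric assumption on $D_j$ (or the cap $\norm{\bd_j}\le C$ combined with the eigenvalue bounds). A secondary subtlety is the noise: the lower bound on $\hat{f}_k$ uses $|q_k|\le\bar q$. Monotonicity of the noisy $\hat f_k$ does not give exact level-set membership, so I will rely on the boundedness statement of Lemma~\ref{lemma_boundedness_and_stationary_in_LMBM} as given rather than reprove it.
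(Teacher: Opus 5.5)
\textbf{Finite case and descent estimate.} Your treatment of the finite-serious-step case and of the descent estimate is sound. Your lower bound $\hat{f}_k\ge f(\x_k)-\bar{q}$ actually handles the noise more carefully than the paper, which asserts $\hat{f}_k\to f(\bar{\x})$ along a subsequence by continuity alone. Only convergence of the monotone sequence $\{\hat{f}_k\}$ is needed. Along $\K$ your bound tends to $f(\bar{\x})-\bar{q}$, so you do not even need boundedness of $\{\x_k\}$ here.

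\textbf{The gap: the index shift.} You take $\ell(k)$, the serious step that \emph{opens} the block containing $k$. This forces you to show $t_{\ell(k)}\norm{\bd_{\ell(k)}}\to 0$, which requires an upper bound on the eigenvalues of $D_{\ell(k)}$. Indeed, $w_j\to 0$ gives $\tilde{\bxi}_j\to\mathbf{0}$ and $\tilde{\bxi}_j^\top D_j\tilde{\bxi}_j\to 0$, but not $D_j\tilde{\bxi}_j\to\mathbf{0}$ if $\norm{D_j}$ is unbounded. Such a bound is not among Assumptions~\ref{assumption_LLC}--\ref{assumption_errors}. Nothing in Section~3 supplies it when there are infinitely many serious steps:
\begin{itemize}
\item Lemma~\ref{lemma_D_(k+1)_leq_D_k_and_TrD_(k+1)_leq_n_in_LMBM} controls $\tr(D_k)$ only along a tail of null steps.
\item The correction in Step~3 only enforces $w_k\ge\varrho\norm{\tilde{\bxi}_k}^2$.
\item The cap $\norm{\bd_k}\le C$ bounds the step without making it small.
\end{itemize}
The Remark after Algorithm~\ref{Alg_LMBM_in_LMBM} is a comment, not a hypothesis of the theorem. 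As written, your argument proves the result only under an extra assumption.

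\textbf{The missing idea.} Use the serious step that \emph{closes} the block instead. For $i\in\K$, let $k(i)\ge i$ be the first iteration at which the descent test of Step~6 succeeds. No serious step occurs during iterations $i,\ldots,k(i)-1$, so $\x_{k(i)}=\x_i$ exactly. Hence $k(i)\to\infty$ and $\{\x_{k(i)}\}\to\bar{\x}$. Also $w_{k(i)}\to 0$, because each $k(i)$ is a serious-step iteration and you have already shown $\varepsilon_L t_{min} w_k\le \hat{f}_k-\hat{f}_{k+1}\to 0$ along those. Applying Lemma~\ref{lemma_boundedness_and_stationary_in_LMBM} to the index set $\{k(i)\mid i\in\K\}$ gives $\mathbf{0}\in\partial f(\bar{\x})+B_{\bar{r}}$. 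This needs no step-length estimate and no assumption on $\{D_k\}$. It is precisely what the paper's set $\K''=\{k\in\K'\mid \exists\, i\in\K,\ i\le k,\ \x_i=\x_k\}$ accomplishes.
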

\begin{proof}
  Let $\bar{\x}$ be an accumulation point of the sequence $\{\x_k\}$. Then there exists an infinite set $\K \subset
  \{1,2,\ldots\}$ such that $\{\x_k\}_{k \in \K} \rightarrow
  \bar{\x}$.
  According to Lemma~\ref{lemma_finite_serious_steps_in_LMBM}, if the number of serious steps were finite, the final serious iterate would already be an approximate stationary point. Hence it is sufficient to consider only the case where infinitely many serious steps occur.
  
  Let $\K'\subset \{1,2,\ldots\}$ denote the set of indices corresponding to serious steps and define
  \begin{align*}
    \K''=\{k\in\K' \mid \exists i \in \K,\,\, i \leq k
    \text{ such that } \x_i=\x_k\}.
  \end{align*}
  The set $\K''$ is clearly infinite, and the subsequence $\{\x_k\}_{k \in \K''}$ still converges to $\bar{\x}$.
  By Assumption \ref{assumption_LLC}, $f$ is continuous and it follows that $\{\hat{f}_k\}_{k \in \K''} \rightarrow f(\bar{\x})$. Moreover, because the sequence $\{\hat{f}_k\}$ is monotonically decreasing due to the sufficient descent criterion~(\ref{eq_serious_descent_criterion_in_LMBM}), we obtain $\hat{f}_k \downarrow f(\bar{\x})$.
  This information together with the condition~(\ref{eq_serious_descent_criterion_in_LMBM}), gives us
  \begin{align}
\label{eq_serious_descent_criterion2_in_LMBMinexact}
  0 \leq \varepsilon_L t_k w_k \leq \hat{f}_k - \hat{f}_{k+1} \rightarrow 0 \qquad
  \text{for }k \geq 1.
  \end{align}
  Consequently, relation (\ref{eq_serious_descent_criterion2_in_LMBMinexact}) implies that $\{w_k\}_{k \in \K''} \rightarrow 0$, when $\{\x_k\}_{k \in \K''} \rightarrow \bar{\x}$, since $t_k \geq t_{min} > 0$ and $\varepsilon_L > 0$. 
  Finally, applying Lemma~\ref{lemma_boundedness_and_stationary_in_LMBM} yields that ${\pmb 0} \in
  \partial f(\bar{\x})+B_{\bar{r}}$, where $\bar{r}$ is the error bound defined in Assumption \ref{assumption_errors}. Hence $\bar{\x}$ is an approximate stationary point of $f$.
\end{proof}

In summary, the sequence $\{\x_k\}$ generated by Algorithm~\ref{Alg_LMBM_in_LMBM} approaches approximate stationarity and we have proved the global convergence of the proposed method. In addition, it is worth noting that global convergence also holds when exact function and subgradient values are used. In this case, we simply set the error bounds $\bar{q}=0$ and $\bar{r} = 0$, and the method converges to a stationary point.

\section{Numerical Experiments} \label{sec_Numerical_Experiments}
We first compare the proposed inexact limited memory bundle method ({\tt InexactLMBM}) with the original limited memory bundle method ({\tt LMBM}) \cite{HaaMieMak:2004,HaaMieMak:2007} and with the proximal bundle method ({\tt MPBNGC}) \cite{Mak:2003,MakNei:1992} on standard academic nonconvex test problems from \cite{HaaMieMak:2004} and on Ferrier polynomials from \cite{HarSagSol:2016} (see Appendix~\ref{appendix_testproblems}). Since both {\tt LMBM} and {\tt MPBNGC} assume exact function and subgradient information, we benchmark against them only on problems with exact evaluations. We then assess the effect of inexactness by running {\tt InexactLMBM} on the same problems with varying noise types and levels (bounds). The source code (Fortran 95) of the proposed method is available at \url{https://github.com/napsu/InexactLMBM}.

% I have 4 core / 8 thread CPU
All computational experiments are carried out on iMac, 4.0 GHz Quad-Core Intel(R) Core(TM) i7 machine with 16 GB of RAM. We use gfortran to compile the Fortran codes. 

\paragraph*{Benchmarking solvers.}
Because {\tt InexactLMBM} is a modification of {\tt LMBM}, {\tt LMBM} is the natural baseline for quantifying the effect of the proposed changes. We note that the original {\tt LMBM} is designed for nonsmooth, large-scale optimization \cite{HaaMieMak:2004,HaaMieMak:2007} and remains one of the few general-purpose solvers for high-dimensional nonsmooth problems. 
In our
experiments, we used the adaptive version of the code with the initial number of stored correction
pairs (used to form the variable metric update) equal to 7 and the maximum number of stored
correction pairs equal to 15.
The {\tt LMBM} source code (Fortran 95) is available at \url{https://napsu.karmitsa.fi/ldgbm/}.
% Jos tähän adaptiiviseen versioon haluaa viitteen, niin pitää viitata minun väitöskirjaani. Muissa missä sitä on käytetty on vain epämääräissesti sanottu, että käytetään adaptiivista versiota (yleensä ilman viitettä).

We also benchmark against {\tt MPBNGC} because the proximal bundle method is the most widely used bundle scheme in nonsmooth optimization, and {\tt MPBNGC} is a mature well-documented implementation \cite{Mak:2003,MakNei:1992}. %\cite{Mak:1993}. 
%The benchmark solver is the proximal bundle algorithm {\tt MPBNGC} from the {\tt NSOLIB} (NonSmooth Optimization LIBrary; see~\cite{Mak:1993}). 
{\tt MPBNGC}  implements the subgradient aggregation strategy of~\cite{Kiw:1985} and uses the quadratic program solver  {\tt PLQDF1}, developed by Luk{\v s}an -- a dual active-set method~\cite{Luk:1984} -- to solve the quadratic direction-finding subproblem; see \cite{Mak:1993,MakNei:1992} for details. We use {\tt MPBNGC} (version 4.0), whose source code (Fortran 77) is available at \url{https://napsu.karmitsa.fi/proxbundle/}; this version includes an improved termination criterion that detects stagnation when objective function values no longer change.

A more extensive numerical analysis of the performance of these two benchmarking solvers and some other existing bundle methods can be found in \cite{HaaMieMak:2004,KarBagMak:2012}.

\paragraph*{Parameters.}
Following \cite{KarBagMak:2012}, we set the bundle size to $\min\{n+3,100\}$ for all solvers. However, it is worth noting that {\tt LMBM} and {\tt InexactLMBM} use only {\em two} bundle elements (together with the values at the current iteration point) to compute aggregate values;  the larger bundle is used solely for (initial) step size selection. We used $\gamma=0.5$ and the stopping tolerance $\varepsilon=10^{-5}$ throughout.   In addition, all solvers include stagnation-based termination: they stop if either the function value or the subgradient norm remains unchanged (within a small numerical tolerance) for a prescribed number of consecutive iterations; we used the default thresholds provided by the implementations. The maximum number of iterations was set to $10{,}000$ for all solvers.\footnote{For {\tt InexactLMBM} this cap corresponds to the number of function evaluations, whereas for {\tt LMBM} and {\tt MPBNGC} the number of function evaluations may be larger.} Otherwise, the default parameters (see implementations of the methods) are used with all methods.

For {\tt InexactLMBM}, the defaults include $t_{min}=10^{-12}$, $\varepsilon_L = 0.01$, $C = 10^{20}$, $\varrho = 10^{-12}$, and a nonmonotone step size selection with \emph{three} previous function values.
Similarly to {\tt LMBM}, we used an adaptive number of correction vectors $\hat{m}_c$, storing 7 pairs initially and 15 pairs at maximum.

\paragraph*{Noise models.} We evaluate {\tt InexactLMBM} under five different noise settings. At each evaluation, the algorithm receives perturbed quantities:
\begin{align*}
    &f_k = f(\y_k)-q_k,\\
    &\bxi_k \in \partial f(\y_k) + B_{r_k}, 
\end{align*}
where $|q_k| \leq q^f_k \leq \bar{q}$ and $0 \leq r_k \leq q^\xi_k \leq \bar{q}$, the bounds $q^f_k$ and $q^\xi_k$ may depend on $k$, and $\bar{q} \geq 0$. The tested forms of noise are: 
\begin{itemize}
    %\item $N_0$: 
    \item $\mathcal{N}0$: no noise, $\bar{q}=q^f_k = q^\xi_k = 0$ for all $k$; %ok
    
    %\item $N_{c}^{f,\xi}$: 
    \item $\mathcal{N}1$: constant noise on both function values and subgradients, $\bar{q} > 0$ fixed, $q^f_k=\bar{q}$, and $q^\xi_k=\bar{q}$ for all $k$; %ok
    
    %\item $N_{v}^{f,\xi}$: 
    \item $\mathcal{N}2$: vanishing noise on both function values and subgradients, $\bar{q} > 0$ fixed, $q^f_k \leq \bar{q}$, $q^\xi_k \leq \bar{q}$, and sequences $q^f_k \downarrow 0, q^\xi_k \downarrow 0$ when $k \rightarrow \infty$;
    
    %\item $N_{c}^{\xi}$: 
    \item $\mathcal{N}3$: exact values for functions with constant noise on subgradient; $q^f_k = 0$, $\bar{q} > 0$ fixed, and $q^\xi_k=\bar{q}$ for all $k$;
    
    %\item $N_{v}^{\xi}$: 
    \item $\mathcal{N}4$: exact values for functions with vanishing subgradient noise, $q^f_k = 0$, $\bar{q} > 0$ fixed, $q^\xi_k \leq \bar{q}$, and the sequence $q^\xi_k \downarrow 0$ when $k \rightarrow \infty$;
    
\end{itemize}
{Similarly to \cite{HarSagSol:2016} vanishing noises are computed by the formulae
\begin{align*}
&q^f_k= \min \{ \bar{q}, \norm{\x_k-\x^*} / 100 \}, \\
&q_\xi^k= \min \{ \bar{q}, \norm{\x_k-\x^*}^2 / 100 \}, 
\end{align*}
where $\x^*$ is the optimal (or best-known) solution. Note that in all cases but $f_3$ (see Appendix~\ref{appendix_testproblems}), $\x^* = 0.0$. For $f_3$ we used the point $\x^*$ obtained with {\tt InexactLMBM} when no noise was included to computations.
}

\begin{remark} (\emph{Boundedness of the convexification parameter $\eta$ under inexact information}) In the exact case, the convexification parameter sequence $\eta_k$ remains bounded  under reasonable assumptions (see Lemma 3 in \cite{HarSag:2010}). 
In the inexact setting, no general boundedness guarantee is available without additional assumptions on the error behavior \cite{HarSagSol:2016}. Nevertheless, in our numerical experiments -- and likewise in the computational study of Hare et al.\ \cite{HarSagSol:2016} -- we did not encounter numerical difficulties attributable to this issue; such adversarial configurations appear rare and predominantly artificial in practice.
\end{remark}

\paragraph*{Problems with exact information.} \texttt{InexactLMBM} was first compared with the original {\tt LMBM} and the proximal bundle method {\tt MPBNGC} using \emph{five} nonsmooth nonconvex academic minimization problems from \cite{HaaMieMak:2004} and \emph{five} Ferrier polynomials from \cite{HarSagSol:2016} with exact information in both function values and subgradients (see also Appendix~\ref{appendix_testproblems}). 
The number of variables used were 2, 5, 10, 20, 50, 100, 200, 500, 1000, and 2000 which gives us 100 nonsmooth nonconvex test problems.

Results with exact information are summarized in Figure \ref{fig:exact_nonconvex} (see also Figure \ref{fig:exact_comparison_pairwise} in Appendix~\ref{appendix_results}). In Figure \ref{fig:error} relative errors (lower is better) with respect to the global solution (or the best known) are given. Figure \ref{fig:accuracy} plots the accuracy of algorithms (higher is better) given in form
$$
\text{Accuracy} = - \log_{10} \left( \max(f-f_{best},10^{-10})\right).
$$
Figure \ref{fig:Nf} shows the number of function evaluations for the algorithms (lower is better) and Figure \ref{fig:cpu} gives the elapsed computational times (lower is better).

\begin{figure*}[h!] 
\centering
\subfigure[Relative error]{\label{fig:error}
\includegraphics[width=0.49\textwidth]{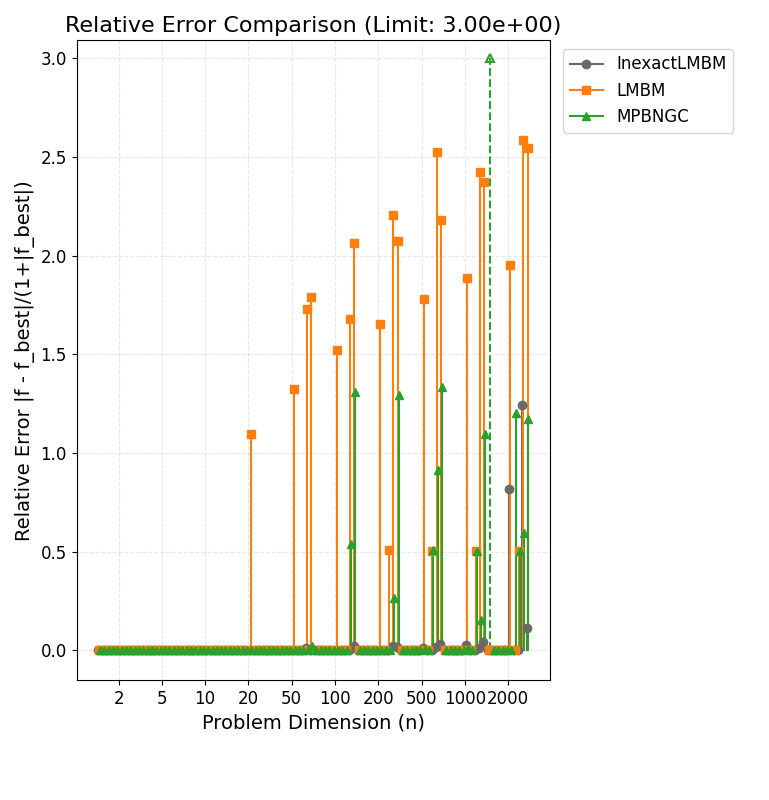}}%\hspace{-1.7cm}
\subfigure[Accuracy]{\label{fig:accuracy}
\includegraphics[width=0.49\textwidth]{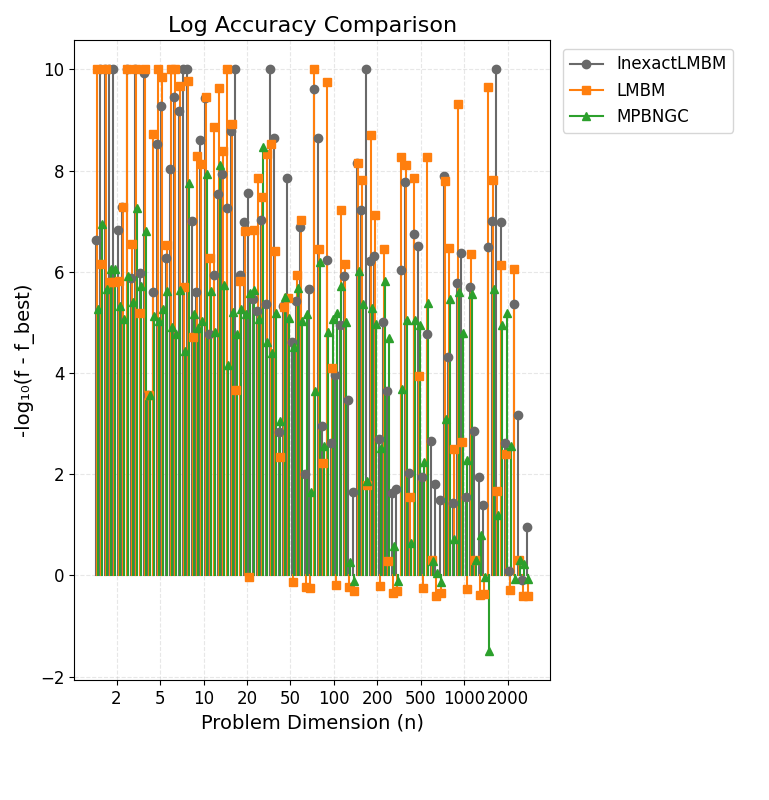}}%\hspace{-1.7cm}

\subfigure[Function evaluations]{\label{fig:Nf}
\includegraphics[width=0.49\textwidth]{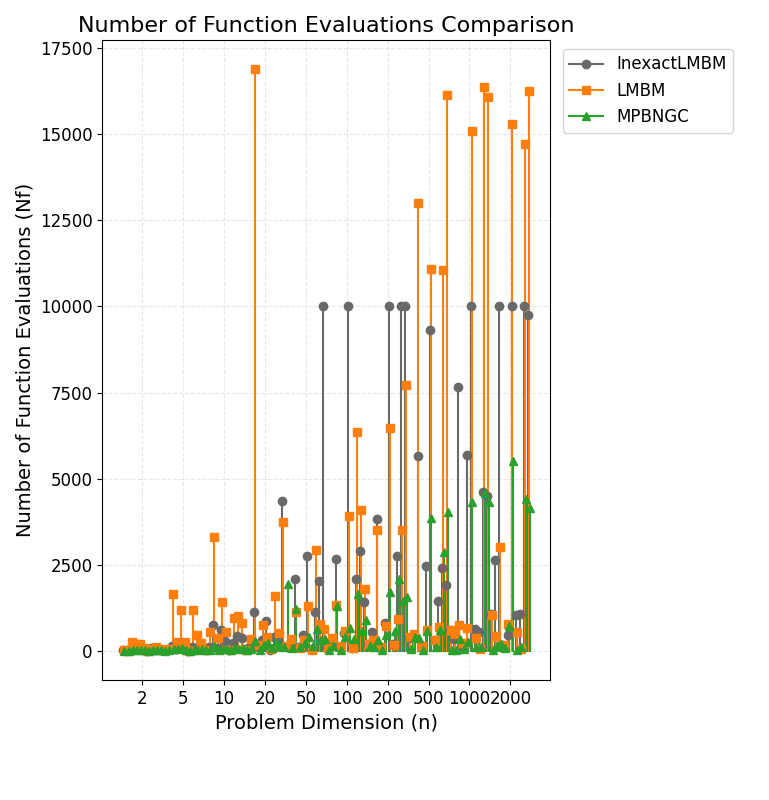}}%\hspace{-1.7cm}
\subfigure[CPU time]{\label{fig:cpu}
\includegraphics[width=0.49\textwidth]{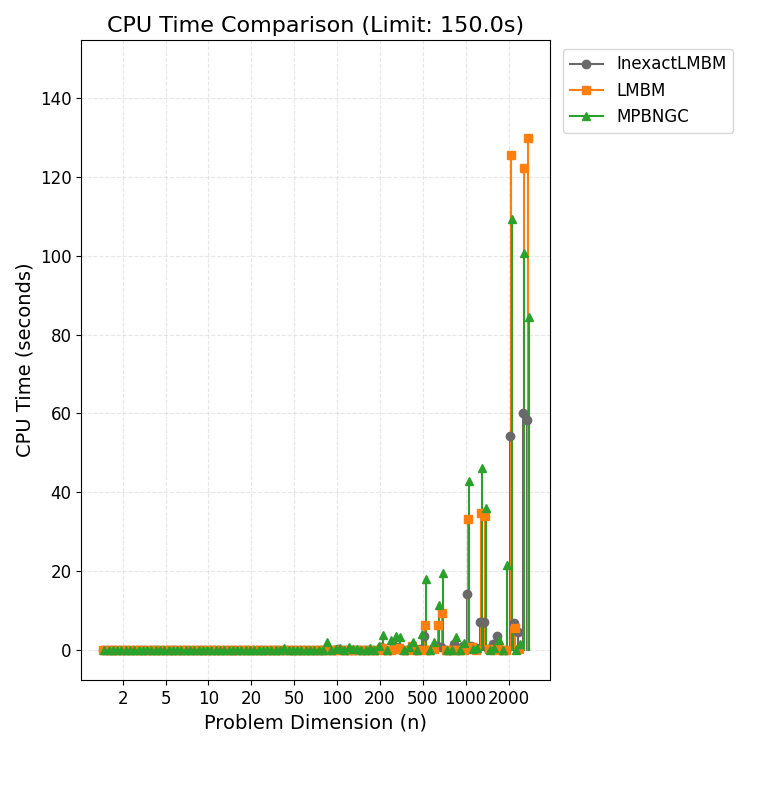}}%\hspace{-0.80cm}
    \caption{Nonsmooth nonconvex problems with exact information:  $n=2,\, 5,\,10,\,20,\,50,\,100,\,200,\,500$, $1000$, and $2000$.    
    {Hollow markers and dashed lines indicate values that lie outside the figure limits.} % Tässä kuvassa näitä ei nyt ole, mutta jos olisi.
    }
    \label{fig:exact_nonconvex}
\end{figure*}

The test problems are nonsmooth and nonconvex, and none of the considered algorithms is guaranteed to converge to a global minimizer. Instead, all solvers are designed to converge to stationary points. Consequently, a solution whose objective value is higher than the global or best-known minimum should not automatically be interpreted as a failure of the algorithm. Such outcomes may correspond to convergence to a different stationary point, which is an expected and theoretically admissible behavior in nonconvex optimization. In particular, multiple stationary points typically exist, and the specific stationary point reached may depend on initialization and algorithmic parameters. The reported relative errors and accuracies with respect to the global minimum therefore reflect the quality of the stationary points found rather than the success or failure of the algorithms in a strict sense.

%In our experiments, an algorithm is considered to fail only if it does not satisfy the prescribed stationarity conditions or termination criteria within the allotted budget.

From Figure~\ref{fig:exact_nonconvex}, four main trends can be observed:
\begin{enumerate}
\item Most notably, {\tt InexactLMBM} more frequently converged to the global solution than either {\tt LMBM} or {\tt MPBNGC} (Figure \ref{fig:error}).
\item The accuracies of {\tt InexactLMBM} and {\tt LMBM} were often higher than asked ($\varepsilon = 10^{-5}$), while {\tt MPBNGC} usually returned results with accuracies close to the asked tolerance (Figure \ref{fig:accuracy}).   
\item The number of function evaluations required by {\tt InexactLMBM} was generally lower than that of {\tt LMBM}. However, in \emph{nine} out of the 100 test problems, {\tt InexactLMBM} terminated only after reaching the maximum number of iterations (Figure \ref{fig:Nf}).
\item The computational times of {\tt InexactLMBM} were less than, or comparable to, those of {\tt LMBM} and {\tt MPBNGC}, even for larger-scale problems (Figure \ref{fig:cpu}).
\end{enumerate}
Although {\tt MPBNGC} typically required fewer function evaluations than both {\tt InexactLMBM} and {\tt LMBM} -- particularly with the new termination criteria introduced in version~4.0 -- it was nevertheless significantly more time-consuming on larger problems. The exception being {\em three} problems with $n=2000$, where  {\tt MPBNGC} was faster than {\tt LMBM} (but not faster than {\tt InexactLMBM}).

One reason for relatively long computational times of {\tt LMBM} in our experiments is that it needs function values and subgradients in separate functions/subroutines. In case of Ferrier polynomials (i.e., half of all problems), this almost doubles the computational burden. Both {\tt InexactLMBM} and {\tt MPBNGC} compute function values and subgradients in one function. Nevertheless, our preliminary tests showed that {\tt InexactLMBM} was usually faster than {\tt LMBM} even if we computed the function values and subgradients separately.

\paragraph*{Problems with inexact information.} Given the strong performance of {\tt InexactLMBM} on problems with exact information, we next analyze its behavior under inexact information by comparing inexact and exact computations. To address the random nature of problems for noise forms $\mathcal{N}1$ -- $\mathcal{N}4$, we made
ten runs with different random noise for each problem with each number of variables. Results are averaged over these ten runs. Noise form $\mathcal{N}0$
 is deterministic, so no repeating is required. Since there is no sense to try tolerance smaller than the added randomness, we used the stopping criterion $w_k < \max\{ \varepsilon, \bar{q}\}$.

Figures \ref{fig:inexact_N1N3} -- \ref{fig:inexact_N2N4} compare {\tt InexactLMBM} with different noise types $\mathcal{N}0$ -- $\mathcal{N}4$ with $\bar{q}=0.01$, and Figures \ref{fig:inexact_N1_accuracy} -- \ref{fig:inexact_N4_accuracy} compare results with different noise levels $\bar{q} = 0, \, 0.0001, \, 0.001,$ and $0.01$ (see also Figures \ref{fig:inexact_comparison_different_noises} and \ref{fig:inexact_noise}
 in Appendix~\ref{appendix_results}). In the figure legends, noisy variants are labeled {\tt InexactLMBM\_N}$i$ (e.g., {\tt InexactLMBM\_N1}), where the suffix denotes the noise type; the exact/noiseless variant ($\mathcal{N}0$) appears as {\tt inexactLMBM} without a suffix. In addition, the noise bound $\bar{q}$ may be added to suffix when needed.
%\todo{Näistä kuvista ehkä mielenkiintoisimpia on tuo accuracy ja Nf} 

From Figure \ref{fig:accuracy_n1n3} we see that for $n\geq 10$, {\tt InexactLMBM} (without noise) typically attains higher accuracy than {\tt InexactLMBM\_N1} and {\tt InexactLMBM\_N3}. In addition, {\tt InexactLMBM\_N3} usually outperforms {\tt InexactLMBM\_N1}.  This is what we would expect, since $\mathcal{N}3$ 
perturbs only subgradients, whereas $\mathcal{N}1$ also perturbs function values. In contrast, for $n < 10$, runs with {\tt InexactLMBM\_N1} often appear unusually accurate. This is largely a noise artifact -- because the noise added to function values can be negative, the reported objective may be artificially lowered, creating a misleading impression of accuracy. This effect is absent with 
$\mathcal{N}3$, which does not perturb function values. Overall, the attained accuracy under noise was better than expected for {\tt InexactLMBM\_N3} -- final errors were often below the noise bound $\bar{q}$ -- while {\tt InexactLMBM\_N1} it was slightly worse than anticipated. Nevertheless, 
comparing {\tt InexactLMBM\_N1} with the original {\tt LMBM} (Figure \ref{fig:accuracy}), we observe that both methods tend to miss the global minimum on the same problem instances, which, as pointed out earlier, can not be considered as failure with local search methods.

Figure \ref{fig:Nf_n1n3} shows that the noisy variants, {\tt InexactLMBM\_N1}  and {\tt InexactLMBM\_N3}, often require fewer function evaluations than {\tt InexactLMBM} with exact computations. 
This is partly due to the effectively looser accuracy demand under noise, and partly because the injected perturbations can facilitate progress in slowly convergent or flat regions.

Figure \ref{fig:inexact_N2N4}, which considers vanishing-noise problems, shows trends similar to Figure \ref{fig:inexact_N1N3}, with {\tt Inexact\-LMBM\_N1} and {\tt InexactLMBM\_N3} replaced by their vanishing-noise counterparts {\tt InexactLMBM\_N2} and {\tt InexactLMBM\_N4}, respectively.
The main differences are: (i) the misleading over-accuracy for small $n$ seen with {\tt InexactLMBM\_N1} disappears, and (ii) {\tt InexactLMBM\_N2} is overall more accurate than {\tt InexactLMBM\_N1} (see also Figure \ref{fig:inexact_comparison_different_noises} in Appendix~\ref{appendix_results}). By contrast, {\tt InexactLMBM\_N3} and {\tt InexactLMBM\_N4} show very similar accuracy. In particular, both {\tt InexactLMBM\_N3} and {\tt InexactLMBM\_N4} achieve accuracy at or above the desired accuracy level (noise bound) for problems with up to 1000 variables. Accordingly, the absence of a marked accuracy advantage is consistent with the attainable accuracy dictated by the noise level and termination criteria.

\begin{figure*}[p] 
\centering
%\subfigure[Relative error]{\label{fig:error_n1n3}
%\includegraphics[width=0.49\textwidth]%{figures/N1N3_f_comparison.png}}%\hspace{-1.7cm}
\subfigure[Accuracy]{\label{fig:accuracy_n1n3}
\includegraphics[width=0.49\textwidth]{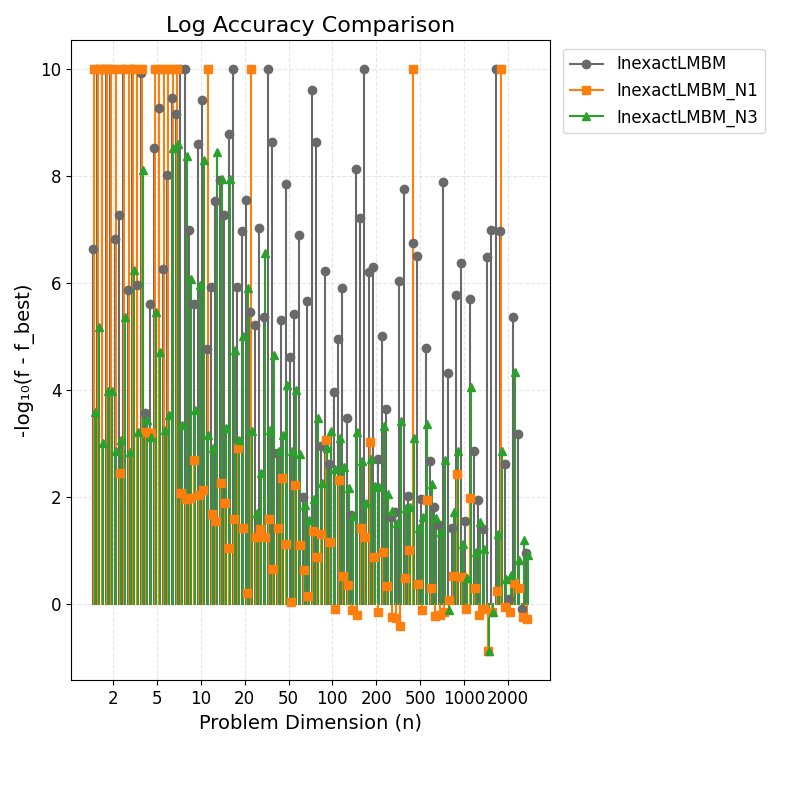}}%\hspace{-1.7cm}
\subfigure[Function evaluations]{\label{fig:Nf_n1n3}
\includegraphics[width=0.49\textwidth]{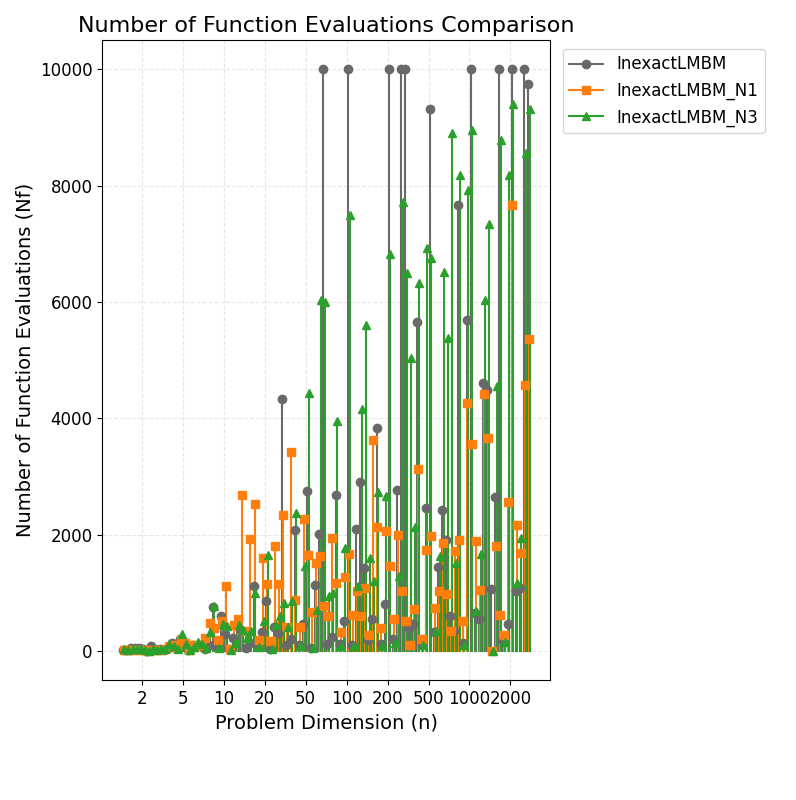}}%\hspace{-1.7cm}
%\subfigure[CPU time]{\label{fig:cpu_n1n3}
%\includegraphics[width=0.49\textwidth]%{figures/N1N3_cpu_comparison.png}}%\hspace{-0.80cm}
    \caption{Nonsmooth nonconvex problems with constant noises {$\mathcal{N}1$ and $\mathcal{N}3$, the noise bound $\bar{q}=0.01$}, and $n=2,\, 5,\,10,\,20,\,50,\,100,\,200,\,500$, $1000$, and $2000$.    
    % {Hollow markers and dashed lines indicate values that lie outside the figure limits.} % Tässä kuvassa näitä ei nyt ole, mutta jos olisi.
    }
    \label{fig:inexact_N1N3}
\end{figure*}

\begin{figure*}[p] 
\centering
%\subfigure[Relative error]{\label{fig:error_n2n4}
%\includegraphics[width=0.49\textwidth]{figures/N2N4_f_comparison.png}}%\hspace{-1.7cm}
\subfigure[Accuracy]{\label{fig:accuracy_n2n4}
\includegraphics[width=0.49\textwidth]{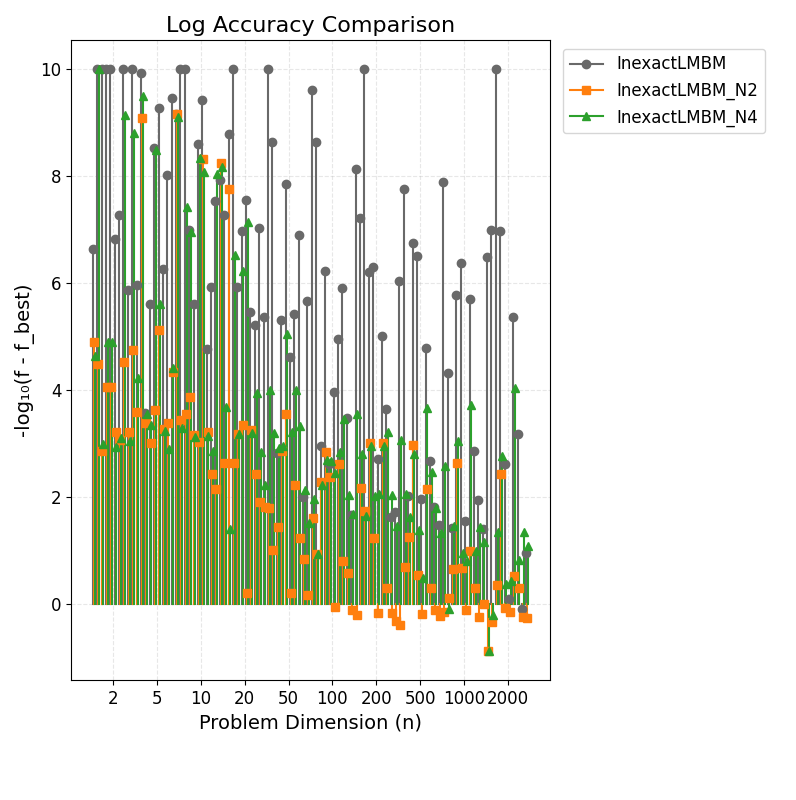}}%\hspace{-1.7cm}
\subfigure[Function evaluations]{\label{fig:Nf_n2n4}
\includegraphics[width=0.49\textwidth]{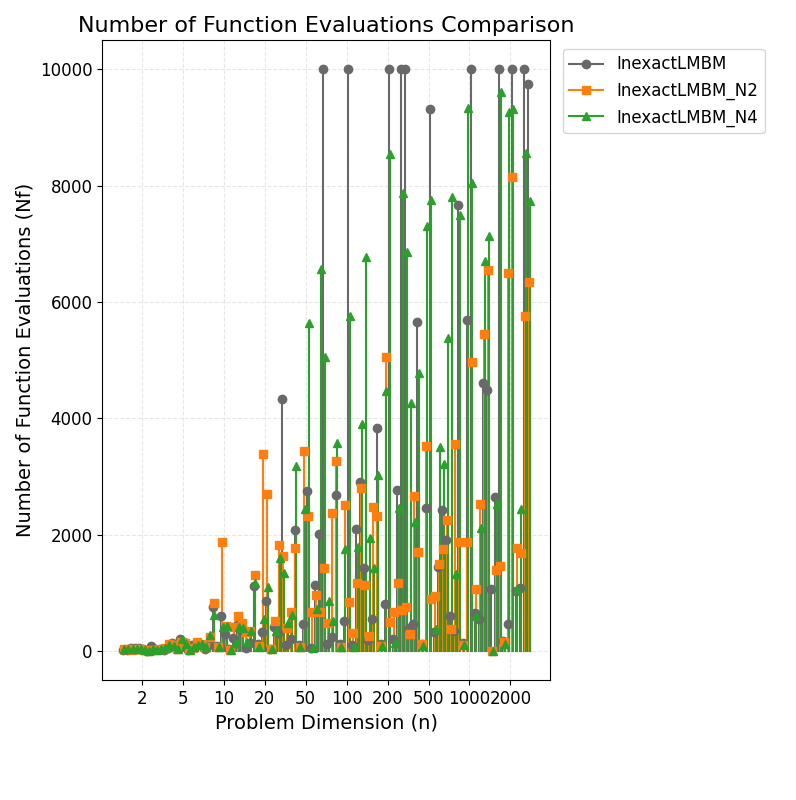}}%\hspace{-1.7cm}
%\subfigure[CPU time]{\label{fig:cpu_n2n4} 
%\includegraphics[width=0.49\textwidth]{figures/N2N4_cpu_comparison.png}}%\hspace{-0.80cm}
    \caption{Nonsmooth nonconvex problems with vanishing noises {$\mathcal{N}2$ and $\mathcal{N}4$, the noise bound $\bar{q}=0.01$}, and  $n=2,\, 5,\,10,\,20,\,50,\,100,\,200,\,500$, $1000$, and $2000$.    
    %{Hollow markers and dashed lines indicate values that lie outside the figure limits.} % Tässä kuvassa näitä ei nyt ole, mutta jos olisi.
    }
    \label{fig:inexact_N2N4}
\end{figure*}

Finally, Figures \ref{fig:inexact_N1_accuracy}--\ref{fig:inexact_N4_accuracy} and Table \ref{deviations} compare performance across different noise bounds $\bar{q} = 0, \, 0.0001$, $0.001,$ and $0.01$
within each noise type 
$\mathcal{N}1$ -- $\mathcal{N}4$ (see also Figures \ref{fig:inexact_comparison_different_noises} and \ref{fig:inexact_noise} in Appendix~\ref{appendix_results}). Across all noise types, smaller $\bar{q}$  consistently yields higher accuracy. Beyond this expected dependence on the noise level, the figures are consistent with the results discussed above.  Complementing these figures, Table \ref{deviations} reports the mean standard deviations of the function values for $\mathcal{N}1$ -- $\mathcal{N}4$ at positive noise levels $\bar{q} = 0.0001, \, 0.001,$ and $0.01$. For smaller problems ($n<100$), 
lower noise consistently yields smaller deviations. For larger problems, however, this monotonic behavior can break down: deviations do not always decrease as the noise level is reduced. A plausible explanation is dimensional amplification -- perturbations accumulate with dimension -- consistent with the generally larger deviations observed as $n$ increases. We also note a change in termination behavior: for smaller problems, runs typically stop by meeting the accuracy tolerance (in noisy settings, the effective tolerance scales with the specified noise level), whereas for larger problems termination is more often triggered by stagnation of the function value or the subgradient norm.

\begin{figure*} 
\centering
\includegraphics[width=0.9\textwidth]{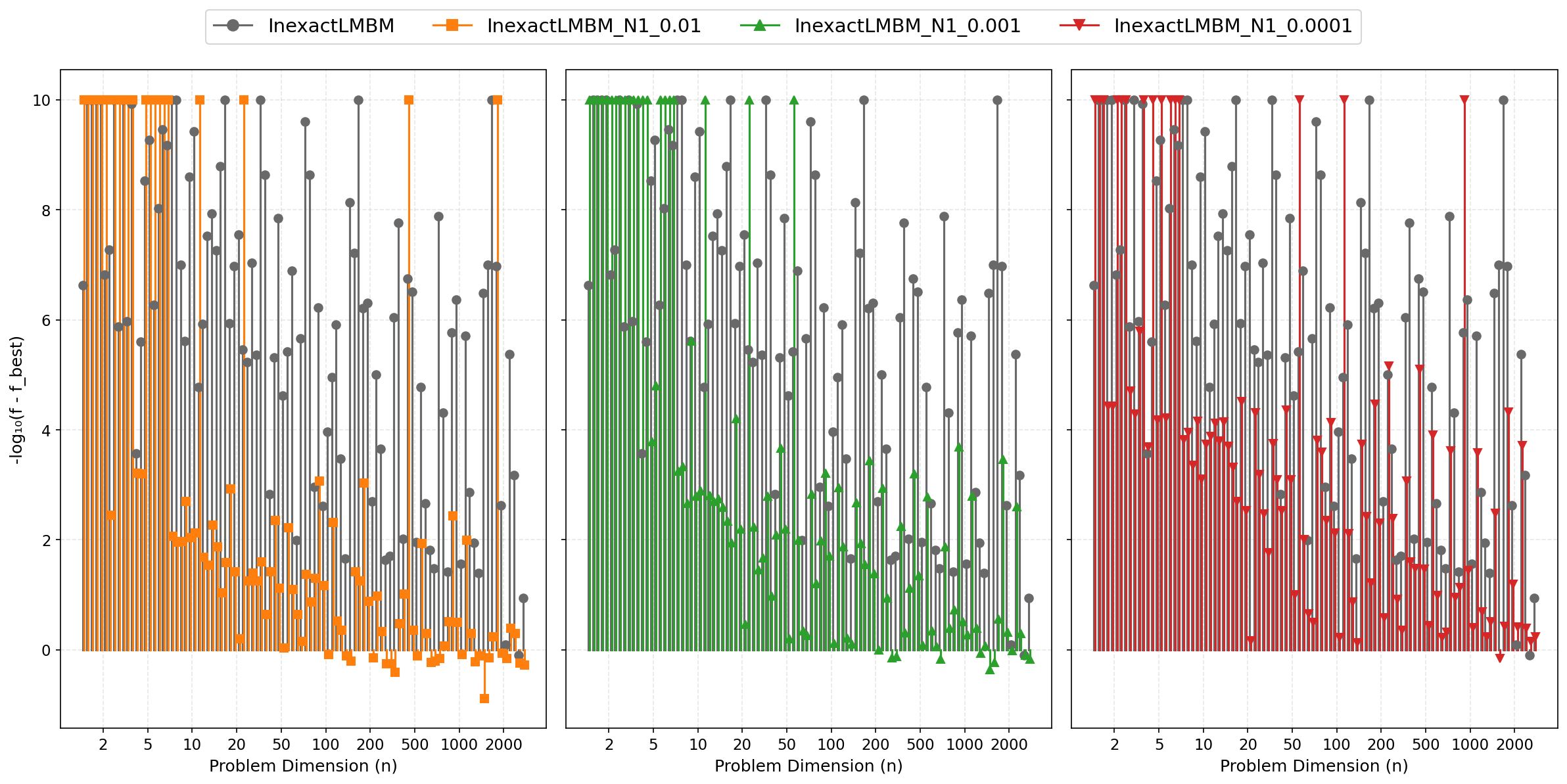}%\hspace{-1.7cm}
    \caption{Comparison of different noise bounds $\bar{q} = 0,\, 0.0001, \,0.001$, and $0.01$ with noise type $\mathcal{N}1$ (constant noise in both function values and subgradients), and $n=2,\, 5,\,10,\,20,\,50,\,100,\,200,\,500$, $1000$, and $2000$.    
    }
    \label{fig:inexact_N1_accuracy}
\end{figure*}
\begin{figure*} 
\centering
\includegraphics[width=0.9\textwidth]{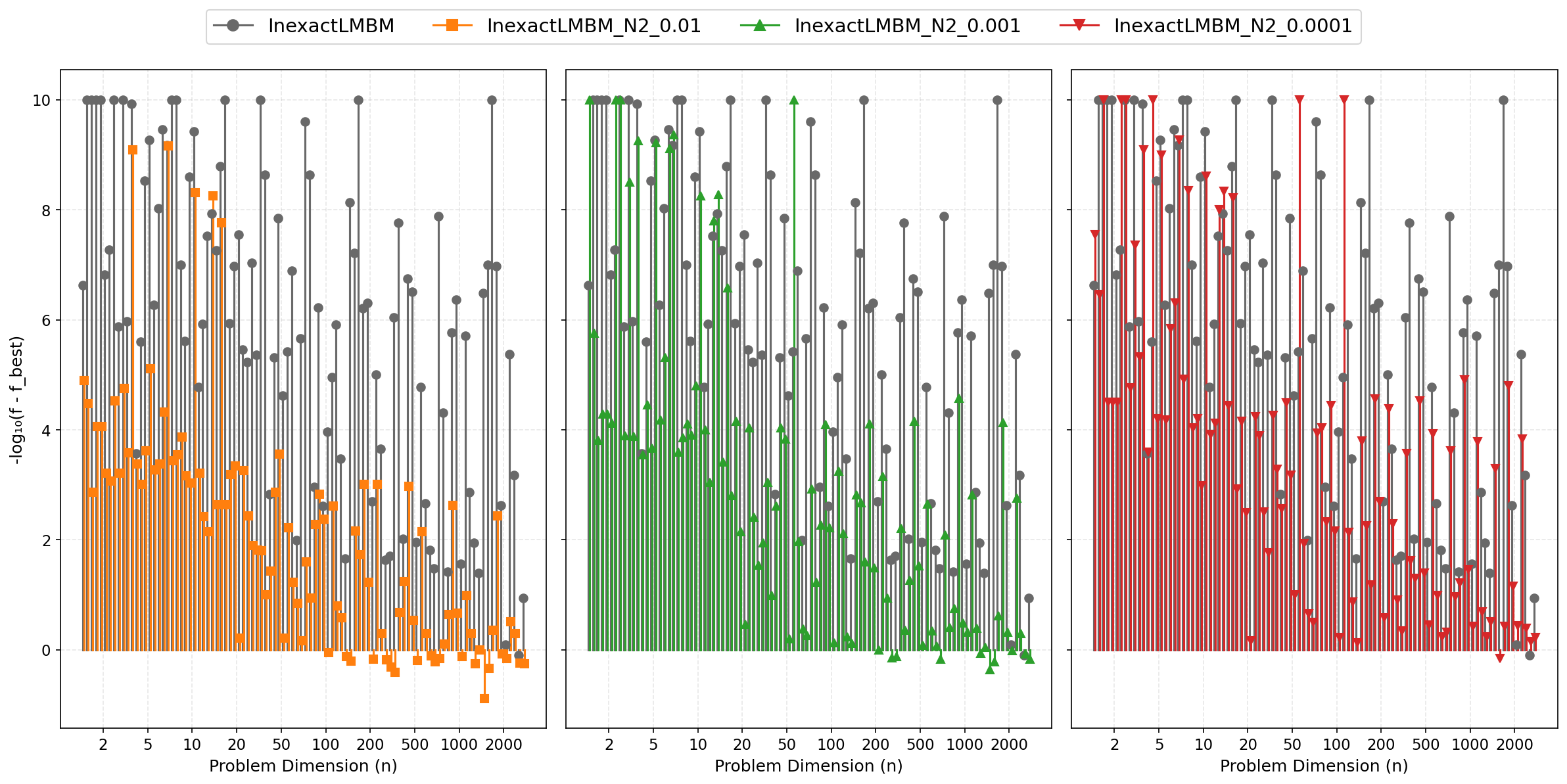}%\hspace{-1.7cm}
    \caption{Comparison of different noise bounds $\bar{q} = 0, \, 0.0001, \,0.001$, and $0.01$ with noise type $\mathcal{N}2$ (vanishing noise in both function values and subgradients), and $n=2,\, 5,\,10,\,20,\,50,\,100,\,200,\,500$, $1000$, and $2000$.    
    }
    \label{fig:inexact_N2_accuracy}
\end{figure*}
\begin{figure*} 
\centering
\includegraphics[width=0.9\textwidth]{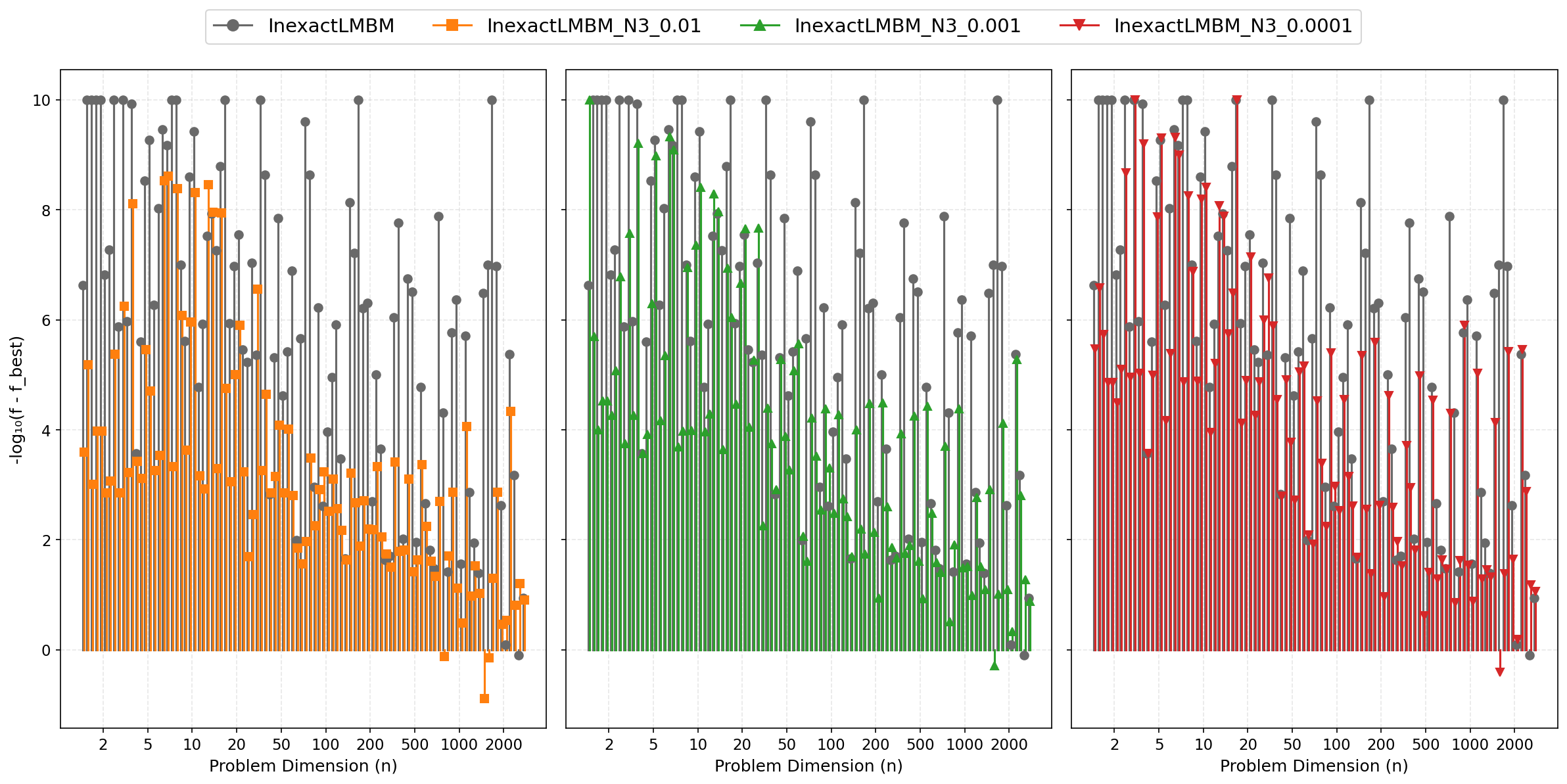}%\hspace{-1.7cm}
    \caption{Comparison of different noise bounds $\bar{q} = 0, \, 0.0001, \,0.001$, and $0.01$ with noise type $\mathcal{N}3$ (constant noise in subgradients), and $n=2,\, 5,\,10,\,20,\,50,\,100,\,200,\,500$, $1000$, and $2000$.    
    }
    \label{fig:inexact_N3_accuracy}
\end{figure*}
\begin{figure*} 
\centering
\includegraphics[width=0.9\textwidth]{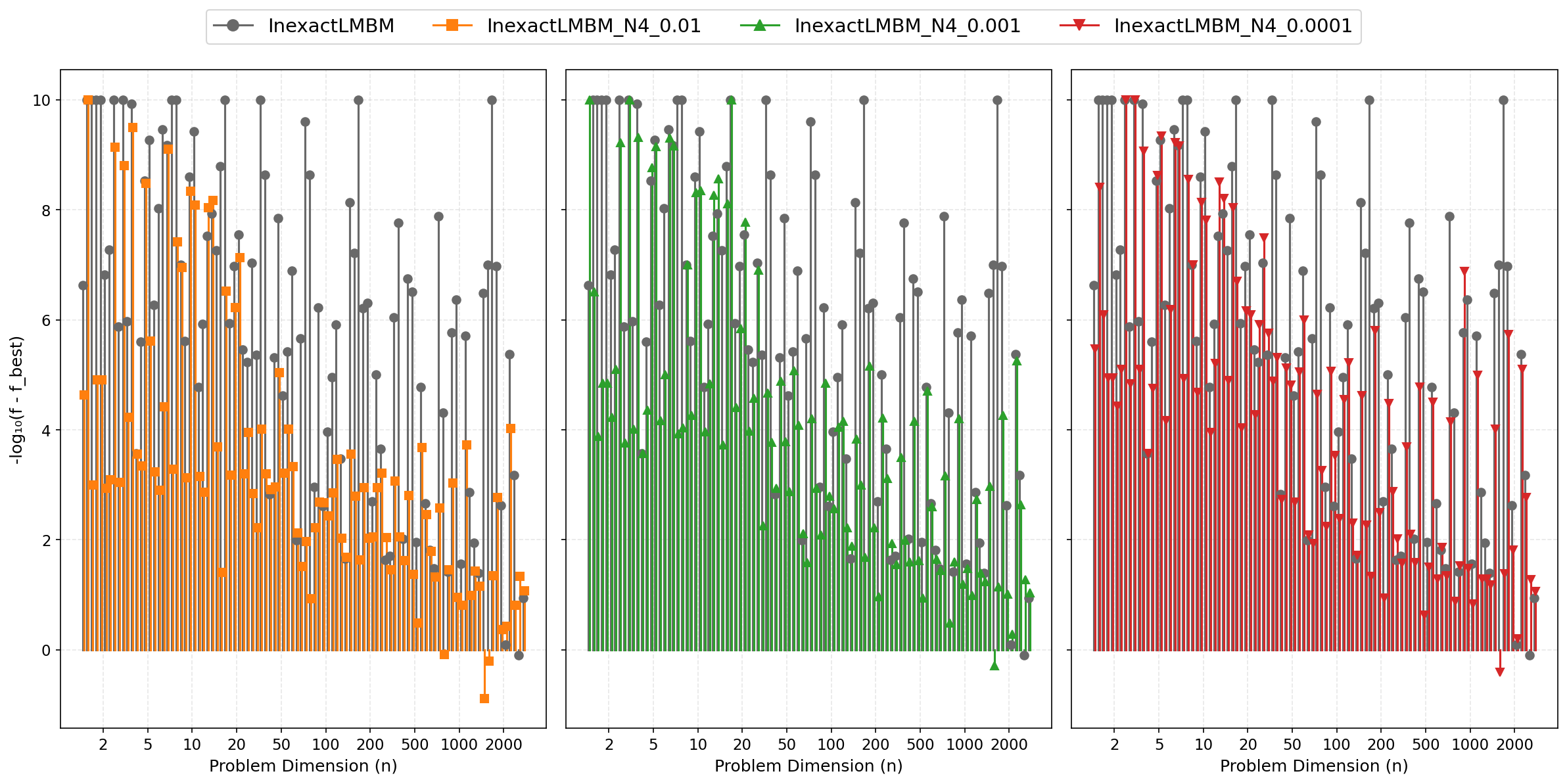}%\hspace{-1.7cm}
    \caption{Comparison of different noise bounds $\bar{q} = 0, \, 0.0001, \,0.001$, and $0.01$ with noise type $\mathcal{N}4$ (vanishing noise in subgradients), and $n=2,\, 5,\,10,\,20,\,50,\,100,\,200,\,500$, $1000$, and $2000$.    
    }
    \label{fig:inexact_N4_accuracy}
\end{figure*}

\begin{table}[h!]% Tässä korjatut arvot ok.
  \caption{\label{deviations} Average standard deviations in function values with different noise levels}
    \resizebox{1.0\textwidth}{!}{
 %\begin{adjustbox}{max width=1.3\textwidth}
 \begin{tabular}{l llll llll llll llll } 
\hline\noalign{\smallskip}
	&&	$\mathcal{N}1$	&&		&		&	$\mathcal{N}2$	&&		&		&	$\mathcal{N}3$	&&		&		&	$\mathcal{N}4$	&		
             \\ 
             \cmidrule{2-4} \cmidrule{6-8} \cmidrule{10-12} \cmidrule{14-16}
  $n/ \bar{q}$ &	0.01	&	0.001	&	0.0001	&&	0.01	&	0.001	&	0.0001	&&	0.01	&	0.001	&	0.0001	&&	0.01	&	0.001	&	0.0001\\ 
\noalign{\smallskip}\hline\noalign{\smallskip}
2	&	4.58E-03	&	3.56E-04	&	5.87E-05	&&	3.61E-04	&	5.49E-05	&	2.63E-05	&&	3.75E-04	&	3.16E-05	&	3.87E-06	&&	1.78E-04	&	2.65E-05	&	1.52E-06\\
5	&	5.25E-03	&	5.80E-04	&	4.96E-05	&&	2.88E-04	&	7.82E-05	&	2.09E-05	&&	2.19E-04	&	2.43E-05	&	3.10E-06	&&	2.22E-04	&	2.03E-05	&	1.45E-06\\
10	&	1.57E-02	&	1.10E-03	&	3.11E-04	&&	3.20E-03	&	2.38E-04	&	2.36E-04	&&	2.49E-04	&	4.48E-05	&	3.20E-06	&&	3.64E-04	&	3.18E-05	&	4.00E-06\\
20	&	1.02E-01	&	6.78E-02	&	6.48E-02	&&	7.28E-02	&	6.59E-02	&	6.45E-02	&&	6.83E-03	&	1.76E-03	&	1.09E-05	&&	1.49E-02	&	1.78E-03	&	3.49E-06\\
50	&	2.29E-01	&	2.25E-01	&	1.58E-01	&&	2.28E-01	&	2.25E-01	&	1.57E-01	&&	4.86E-03	&	2.89E-03	&	2.13E-03	&&	3.55E-03	&	2.67E-03	&	2.16E-03\\
100	&	1.94E-01	&	2.13E-01	&	1.65E-01	&&	2.28E-01	&	2.16E-01	&	1.65E-01	&&	7.10E-03	&	2.86E-03	&	3.10E-03	&&	3.20E-02	&	3.48E-03	&	3.51E-03\\
200	&	4.65E-01	&	2.29E-01	&	1.60E-01	&&	3.92E-01	&	2.28E-01	&	1.59E-01	&&	7.24E-03	&	3.89E-02	&	3.96E-02	&&	6.73E-03	&	3.74E-02	&	4.02E-02\\
500	&	6.68E-01	&	3.32E-01	&	2.15E-01	&&	6.40E-01	&	3.31E-01	&	2.18E-01	&&	1.27E-02	&	4.13E-02	&	9.37E-02	&&	5.82E-02	&	4.21E-02	&	9.63E-02\\
1000	&	6.19E-01	&	3.64E-01	&	2.29E-01	&&	6.10E-01	&	3.64E-01	&	2.28E-01	&&	2.05E-01	&	1.25E-01	&	7.15E-02	&&	1.81E-01	&	1.23E-01	&	7.68E-02\\
2000	&	4.58E-01	&	8.04E-01	&	4.28E-01	&&	4.61E-01	&	8.22E-01	&	4.28E-01	&&	2.74E-01	&	2.24E-01	&	1.50E-01	&&	2.70E-01	&	2.08E-01	&	1.47E-01\\
\noalign{\smallskip}\hline\noalign{\smallskip}
\end{tabular}
%\end{adjustbox}
}
\end{table}

%{\color{red} 
%Pitääkö meillä olla testejä konveksifiointiparametrin $\eta$ suuruudesta. Warren Harella on. Nopea testaus sanoo, että $\mathcal{N}3$ ja $\mathcal{N}4$ tuo parametri on hyvinkin bounded, mutta $\mathcal{N}1$ ja $\mathcal{N}2$ kasvaa välillä tosi suureksi mikä taas käytännössa aiheuttaa, että $\beta = 0$. Nopea testaus $\mathcal{N}1$, jossa $\beta = \max(\beta, |\alpha|)$ ei kuitenkaan parantanut tuloksia (jos ei huonontanutkaan).
%}

These experiments indicate that {\inexactLMBM} is efficient for large-scale nonsmooth optimization and a strong alternative not only with inexact information but even when exact evaluations are available: it converged to a global minimum more often than the other solvers tested, achieved at least comparable accuracy, and required less computational time.
In inexact settings, when both function values and subgradients are perturbed ($\mathcal{N}1,\mathcal{N}2$), accuracy naturally degrades with noise. Nevertheless, for relatively small problems ($n \leq 50$) and small bounds ($\bar{q}\leq0.001$), the method still attains the desired accuracy, especially in the vanishing-noise variant ($\mathcal{N}2$). When only subgradients are perturbed ($\mathcal{N}3/\mathcal{N}4$), the attained accuracy is typically commensurate with -- or better than -- the nominal noise level, and with small bounds ($\bar{q}\leq0.001$) it is often nearly indistinguishable from the noiseless case.

\section{Conclusions}
\label{sec_Conclusion}
This paper introduces \inexactLMBM, a novel inexact limited memory bundle method for large-scale nonsmooth and nonconvex optimization that explicitly accommodates inexact function values and/or subgradients. Such inexactness may arise in practice, for instance, from measurement or modeling error, numerical approximations, stochastic simulations, and privacy-preserving perturbations (e.g., differentially private noise). We have proved the global convergence of the proposed method to an approximately stationary point under the standard assumption that the objective function is locally Lipschitz continuous. In contrast to traditional approaches, however, our analysis does not require an additional semi-smoothness assumption, which makes {\tt InexactLMBM} applicable to a broader class of nonsmooth problems.

The performance of {\tt InexactLMBM} was evaluated across several scenarios. In the case of exact function and subgradient information, it was benchmarked against the original {\tt LMBM} and the proximal bundle method {\tt MPBNGC}. The results indicate that {\tt InexactLMBM} more consistently reached high-quality solutions while requiring fewer function evaluations and less computational time. Notably, although all tested methods are local optimization methods and therefore not expected to converge to a global solution, {\inexactLMBM} reached the global solution more often than the competing methods, which may be viewed as an additional practical advantage. To study the effects of inexactness, the algorithm is tested with both constant or decreasing noise in the subgradients, as well as in scenarios where both 
function values and subgradients are available only inexactly. In all cases, the results demonstrate that {\inexactLMBM} remains robust and efficient for large-scale nonsmooth optimization despite the presence of noise.

An important direction for future work is to explore the use of {\inexactLMBM} as a privacy preserving optimization mechanism in differentially private machine learning (analogously to DP-SGD \cite{abadi2016deep}). In this setting, the noise added to ensure privacy naturally gives rise to inexact function and subgradient information, which aligns well with the proposed framework. This opens an interesting avenue for further theoretical analysis and practical algorithm development.

\section*{Acknowledgements}
This work was financially supported by the Research Council of Finland (projects no.\ \#340182 and \#340140), Jenny and Antti Wihuri Foundation, University of Turku Graduate School UTUGS -- Doctoral Programme in Exact Sciences (EXACTUS), University of Turku, and the European Union's Horizon Europe project Privacy Preserving Identity Management for Digital Wallet and Secure Data Sharing and Processing for Cyber Threat Intelligence Data (PRIVIDEMA, Grant Agreement No.\ 101167964).
Views and opinions expressed are however those of the authors only and do not necessarily reflect those of the European Union or the European Cybersecurity Competence Centre. Neither the European Union nor the European Cybersecurity Competence Centre can be held responsible for them.

\section*{Disclosure statement}
The authors report there are no competing interests to declare.

\section*{Data availability statement}
The data that support the findings of this study are available from the corresponding author, upon reasonable request.

\bibliographystyle{tfs}
\bibliography{references_uusi}

\newpage
\appendix
\section{Test problems}
\label{appendix_testproblems}

In our numerical experiments we used \emph{five} nonsmooth nonconvex academic minimization problems from \cite{HaaMieMak:2004} and \emph{five} Ferrier polynomials from \cite{HarSagSol:2016}. Below find information of these problems. 

\paragraph*{Large-scale nonsmooth nonconvex test problems from \cite{HaaMieMak:2004}.}
We consider the following non\-smooth, generally nonconvex, objectives with starting point given as $\x^{(1)}$:
% Vaihda järjestys, niin numerot tulee oikein, mutta silloin melkein pitäis vaihtaa ne tuloksissakin
%{\bf 6. Number of Active Faces} \> \> \> \> \\
% Optimum x^* = 0, 0, ....
\begin{align*}
 &f_1(\x)= \max_{1 \leq i \leq n}\left\{\, g\left(- \sum_{i=1}^n
    x_i\right),g(x_i) \,\right\}, \\
& \qquad \qquad \text{where } g(y) = \ln\left(|y|+1\right) \text{ and }
x_i^{(1)}=1 \text{ for all } i=1,\ldots,n. \\
%
%{\bf 7. Nonsmooth generalization of Brown function 2} \>  
% Optimum x^* = 0, 0, ....
&f_2(\x)= \sum_{i=1}^{n-1}\left( \left| x_i \right|^{x_{i+1}^2+1}+\left|
    x_{i+1} \right|^{x_i^2+1} \right), \\
& \qquad \qquad \text{where } x_i^{(1)}=\begin{cases}
     -1, \quad \text{when} \mod(i,2)=1,\\
     \phantom{-}1,  \quad \text{when} \mod(i,2)=0.
 \end{cases}\\
%
%{\bf 8. Chained Mifflin 2} \> \> \> \> \\
% Optimum x^* = not available
 &f_3(\x)= \sum_{i=1}^{n-1} \left(\,
  -x_i+2\left(x_i^2+x_{i+1}^2-1\right)+1.75\left|x_i^2+x_{i+1}^2-1\right|\,\right), \\
& \qquad \qquad \text{where } x_i^{(1)}=-1 \text{ for all } i=1,\ldots,n. \\
%
%{\bf 9. Chained Crescent I} \> \> \> \> \\
 &f_4(\x)= \max \left\{\, \sum_{i=1}^{n-1} \left(x_i^2 +\left(
    x_{i+1}-1\right)^2+x_{i+1}-1\right),\right.
 \left.\,\sum_{i=1}^{n-1} \left(-x_i^2 -\left( x_{i+1}-1\right)^2+x_{i+1}+1\right) \right\},\\
& \qquad \qquad \text{where } x_i^{(1)}=\begin{cases}
     -1.5, \quad \text{when} \mod(i,2)=1,\\
     \phantom{-}2.0,  \quad \text{when} \mod(i,2)=0.
 \end{cases}\\
%
%{\bf 10. Chained Crescent II} \> \> \> \> \\
&f_5(\x)= \sum_{i=1}^{n-1} \max \left\{x_i^2 +\left(
    x_{i+1}-1\right)^2+x_{i+1}-1, -x_i^2 -\left( x_{i+1}-1\right)^2+x_{i+1}+1\right\},
 \\
& \qquad \qquad \text{where } x_i^{(1)}=\begin{cases}
     -1.5, \quad \text{when} \mod(i,2)=1,\\
     \phantom{-}2.0,  \quad \text{when} \mod(i,2)=0.
 \end{cases}\\
\end{align*}

All these functions but $f_3$ have $\x^*= \boldsymbol{0}$ as a global minimizer.
For $f_3$ the best known solutions are 
$-1$ for $n=2$,
$-2.98 $ for $n=5$,
$-6.51 $ for $n=10$,
$-13.58 $ for $n=20$,
$-34.80 $ for $n=50$,
$-70.15 $ for $n=100$,
$-140.86 $ for $n=200$,
$-352.99 $ for $n=500$,
$-706.54 $ for $n=1000$, and
$-1413.65$ for $n=2000$. In our experiments, we have used $\x^*$ that correspond to these function values for computations of vanishing noises $\mathcal{N}2$ and $\mathcal{N}4$.

\paragraph*{Ferrier polynomials from \cite{HarSagSol:2016}:} For $\x \in \R^n$ and $i = 1, \ldots, n$, define
$$h_i(\x) = (i x_i^2 - 2 x_i) + \sum_{j=1}^n x_j.$$
Using these building blocks, we consider the following nonsmooth, generally nonconvex objectives:
\begin{align*}
    &f_6(\x) = \sum_{i=1}^n |h_i(\x)|, \\
    &f_7(\x) = \sum_{i=1}^n \big(h_i(\x)\big)^2, \\
    &f_8(\x) = \max_{1 \le i \le n} |h_i(\x)|,\\
    &f_9(\x) = \sum_{i=1}^n |h_i(\x)|+ \frac{1}{2} |\x|^2, \qquad \text{and} \\
    &f_{10}(\x) = \sum_{i=1}^n |h_i(\x)|+ \frac{1}{2} |\x|. \\
\end{align*}
 All these functions $f_5$--$f_{10}$ have $\x^*= \boldsymbol{0}$ as a global minimizer. We use $\x^{(1)} = [1,\, 1/4,\, 1/9,\ldots, 1/n^2]$ as a starting point for optimization.
 
 %For these polynomials, the convexified objective $f_i(\x) + \tfrac{\eta}{2}|x|^2$ is convex whenever $\eta > 2n$, a fact sometimes used in analyses based on convexification.
\newpage

\section{Numerical results}
\label{appendix_results}

\begin{figure*}[h!] 
\centering
\subfigure[{\tt InexactLMBM} vs.\ {\tt LMBM}]{\label{fig:IvsLMBM}
\includegraphics[width=0.49\textwidth]{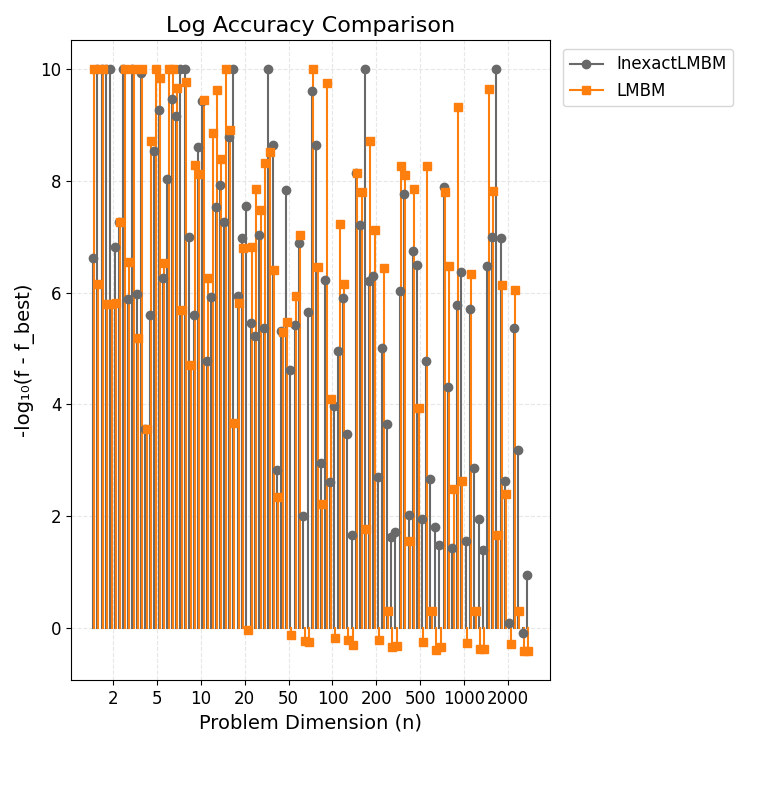}}%\hspace{-1.7cm}
\subfigure[{\tt InexactLMBM} vs.\ {\tt MPBNGC}]{\label{fig:IvsMPBNGC}
\includegraphics[width=0.49\textwidth]{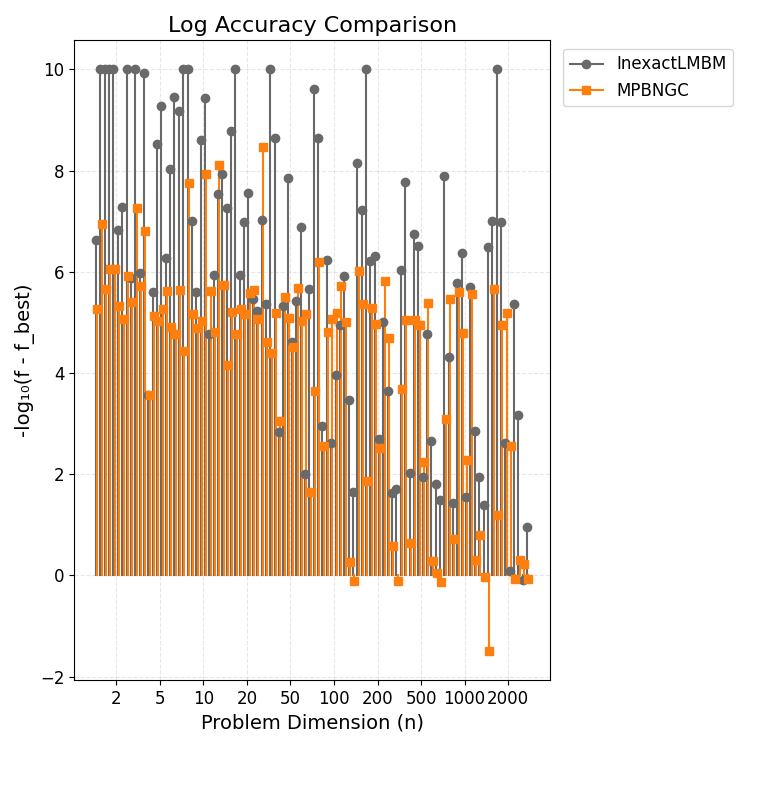}}%\hspace{-1.7cm}
    \caption{Pairwise comparison of {\tt InexactLMBM} with (a) {\tt LMBM}, (b) {\tt MPBNGC}. Nonsmooth nonconvex problems with exact information:  $n=2,\, 5,\,10,\,20,\,50,\,100,\,200,\,500$, $1000$, and $2000$.   
    } % Nämä kuvat on ok
    \label{fig:exact_comparison_pairwise}
\end{figure*}

\begin{figure*}[h!] 
\centering
\subfigure[{$\mathcal{N}1$ vs.\ $\mathcal{N}2$}]{\label{fig:n1}
\includegraphics[width=0.49\textwidth]{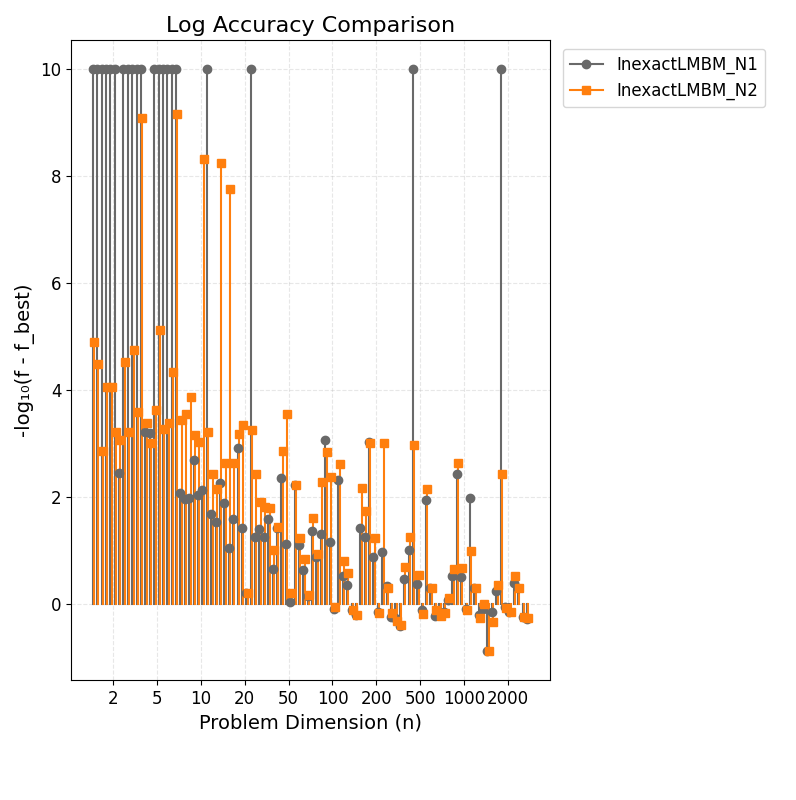}}%\hspace{-1.7cm}
\subfigure[{$\mathcal{N}3$ vs.\ $\mathcal{N}4$}]{\label{fig:n2}
\includegraphics[width=0.49\textwidth]{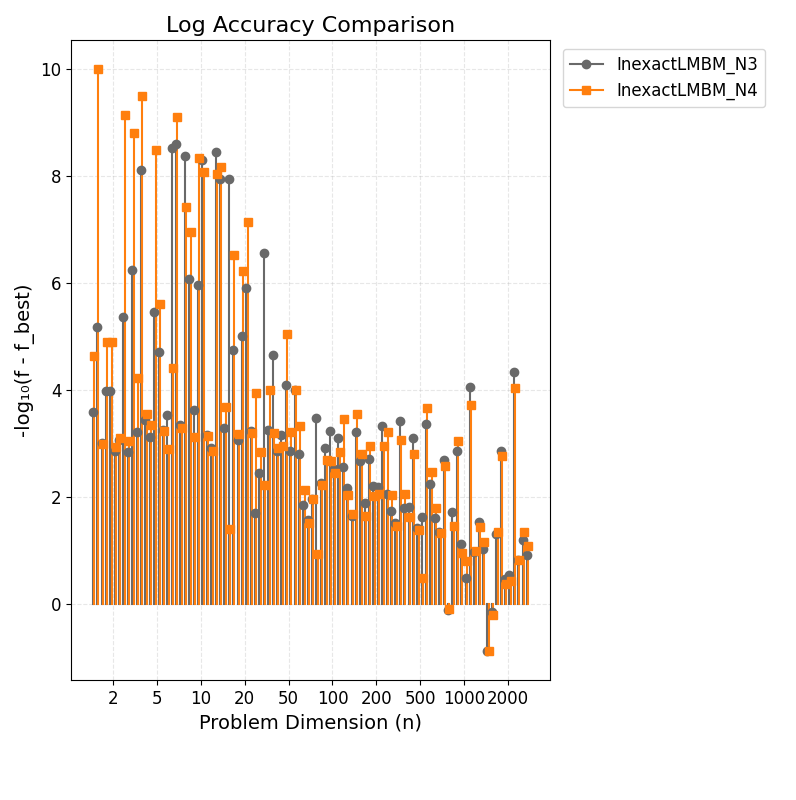}}%\hspace{-1.7cm}

%\subfigure[{\color{red}$\mathcal{N}3$, $q=0$ and $0.01$}]{\label{fig:n3}
%\includegraphics[width=0.49\textwidth]{figures/N3_log_f_comparison0.01.png}}%\hspace{-1.7cm}
%\subfigure[{\color{red}$\mathcal{N}3$, $q=0$ and $0.0001$}]{\label{fig:cpu_n2n4}
%\includegraphics[width=0.49\textwidth]%{figures/N3_log_f_comparison0.0001.png}}%\hspace{-0.80cm}
    \caption{Comparison of different noise types: constant vs.\ vanishing noise (a) in both function values and subgradients, (b) only in subgradients. The noise bound $\bar{q}=0.01$.
    } % Nämä kuvat on ok
    \label{fig:inexact_comparison_different_noises}
\end{figure*}

\begin{figure*} 
\centering
\subfigure[{$\mathcal{N}1$}]{\label{fig:n1}
\includegraphics[width=0.49\textwidth]{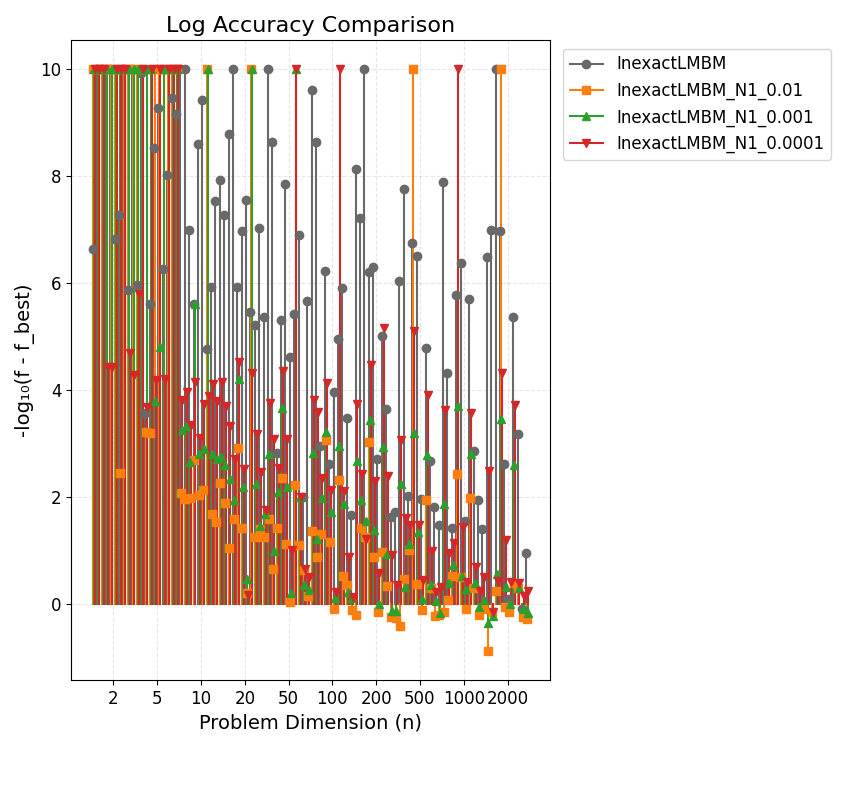}}%\hspace{-1.7cm}
\subfigure[{$\mathcal{N}2$}]{\label{fig:n2}
\includegraphics[width=0.49\textwidth]{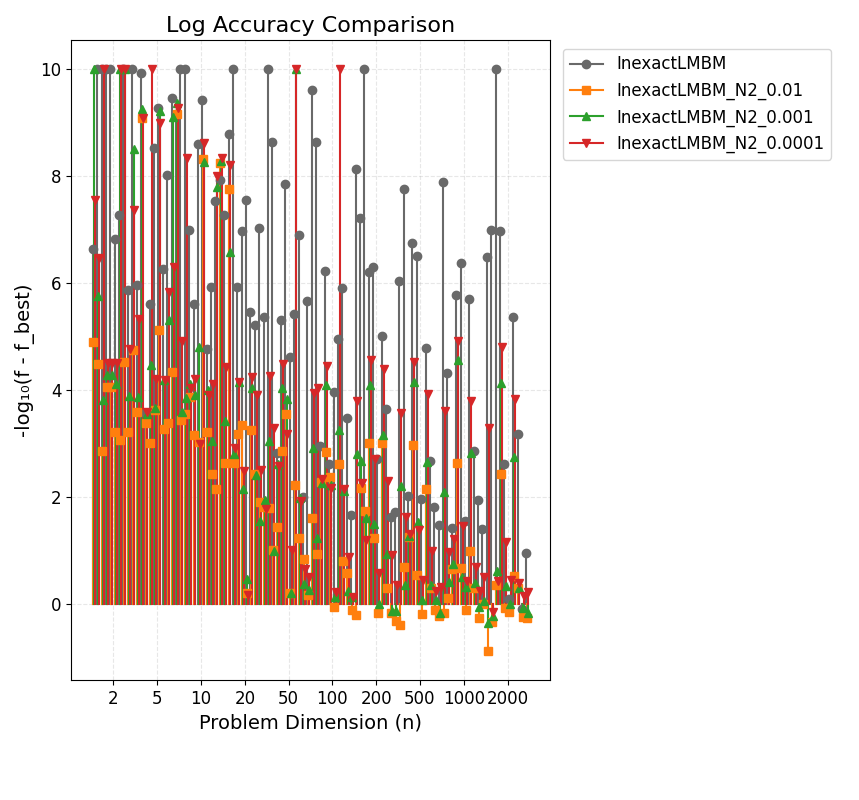}}%\hspace{-1.7cm}

\subfigure[{$\mathcal{N}3$}]{\label{fig:n3}
\includegraphics[width=0.49\textwidth]{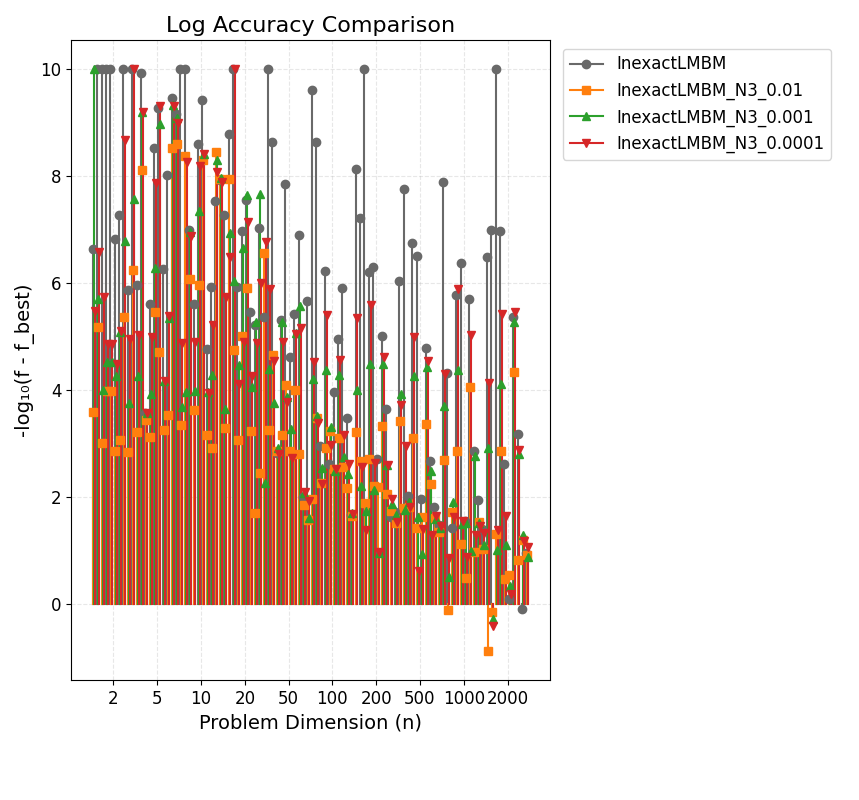}}%\hspace{-1.7cm}
\subfigure[{$\mathcal{N}4$}]{\label{fig:cpu_n2n4}
\includegraphics[width=0.49\textwidth]{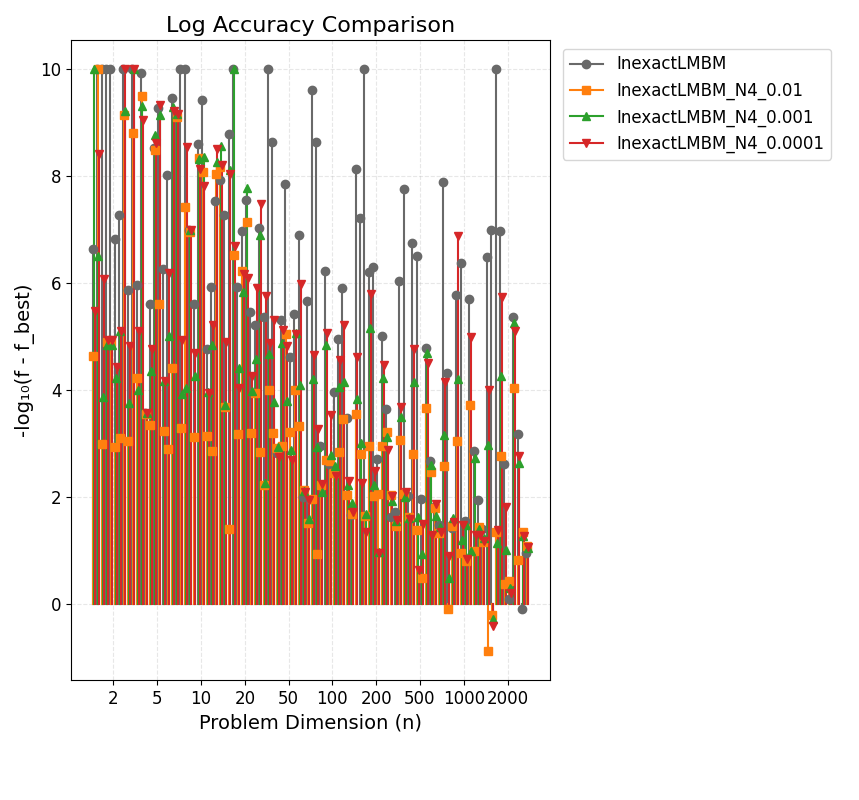}}%\hspace{-0.80cm}
    \caption{Comparison of different noise bounds  $\bar{q} = 0,\, 0.01, \, 0.001,$ and $0.0001$ with noise types (a) $\mathcal{N}1$ (constant noise in both function values and subgradients), (b) $\mathcal{N}2$ (vanishing noise in both function values and subgradients), (c) $\mathcal{N}3$ (constant noise in subgradients), and (d) $\mathcal{N}4$ (vanishing noise subgradients).
    }
    \label{fig:inexact_noise}
\end{figure*}

\clearpage

\section{Matrix updating}
\label{appendix_matrixupdating}

Here, we provide a more detailed description of the limited memory matrix-updating procedure for the matrix $D_k$ used in computing the search direction $\bd_k$. The procedure follows the original LMBM \cite{HaaMieMak:2004, HaaMieMak:2007}.

As described in the article, the main idea of the limited memory matrix updating is that, instead of explicitly storing the matrices $D_k$, information from a limited number of previous iterations is used to define $D_k$ implicitly. Thus, at each iteration, a small number of correction pairs $(\bs_i, \bu_i)$, $(i<k)$, are stored. As in the original LMBM, these correction pairs are used in the L-BFGS and L-SR1 updates to construct the matrix $D_k$. In particular, if the previous step was a serious step, the L-BFGS update is applied, whereas after a null step the L-SR1 update is used. The detailed formulas for the limited memory matrix updates are given next.

Let $\hat{m}_c$ denote the maximum number of stored correction pairs specified by the user ($\hat{m}_c \geq 3$), and define $\hat{m}_k = \min \{\,k-1, \hat{m}_c\,\}$ as the number of correction pairs currently stored. The $n \times \hat{m}_k$ matrices $S_k$ and $U_k$ are then formed as
$$
S_k = \begin{bmatrix}\bs_{k-\hat{m}_k} & \ldots &\bs_{k-1}\end{bmatrix}, \quad \text{and} \quad
  U_k = \begin{bmatrix}\bu_{k-\hat{m}_k} & \ldots & \bu_{k-1}\end{bmatrix}.
$$
When the storage capacity is reached, the oldest correction pairs are overwritten by the new pairs. Consequently, except for the first few iterations, the $\hat{m}_c$ most recent correction pairs $(\bs_i, \bu_i)$ are always available.

Following the approach in~\cite{ByrNocSch:1994}, the L-BFGS update is defined by
\begin{align}
  \label{eq_D_k_BFGS_in_LMBM}
  D_k = \vartheta_k I + \begin{bmatrix}S_k & \vartheta_k U_k\end{bmatrix}
  \begin{bmatrix}
    (R_k^{-1})^{\top} (C_k + \vartheta_k U_k^{\top} U_k) R_k^{-1} & -(R_k^{-1})^{\top} \\
    -R_k^{-1} & 0
  \end{bmatrix}
  \begin{bmatrix} S_k^{\top} \\ \vartheta_k U_k^{\top} \end{bmatrix}.
\end{align}
Here, $R_k$ is an $\hat{m}_k \times \hat{m}_k$ upper triangular matrix with entries
\begin{align*}
  (R_k)_{ij} = 
  \begin{cases}
    (\bs_{k-\hat{m}_k-1+i})^{\top} (\bu_{k-\hat{m}_k-1+j}), & i \le j, \\
    0, & \text{otherwise},
  \end{cases}
\end{align*}
$C_k$ is a $\hat{m}_k \times \hat{m}_k$ diagonal matrix given by
\begin{align*}
  C_k = \diag [ \bs_{k-\hat{m}_k}^{\top} \bu_{k-\hat{m}_k}, \ldots, \bs_{k-1}^{\top} \bu_{k-1} ],
\end{align*}
and the scaling parameter $\vartheta_k > 0$ is computed as
\begin{align}
\label{eq_vartheta_in_LMBM}
  \vartheta_k = \frac{\bu_{k-1}^{\top} \bs_{k-1}}{\bu_{k-1}^{\top} \bu_{k-1}}.
\end{align}
Furthermore, the L-SR1 update (see, e.g.,~\cite{ByrNocSch:1994}) is expressed as
\begin{align}
  \label{eq_D_k_SR1_in_LMBM}
  D_k = \vartheta_k I - (\vartheta_k U_k - S_k)
        (\vartheta_k U_k^{\top} U_k - R_k - R_k^{\top} + C_k)^{-1}
        (\vartheta_k U_k - S_k)^{\top},
\end{align}
where, unlike~(\ref{eq_vartheta_in_LMBM}), the scaling parameter is set to $\vartheta_k = 1$ for all $k$. For the exact algorithm for direction finding using the L-SR1 update \eqref{eq_D_k_SR1_in_LMBM}, see for example \cite{HaaMieMak:2007}.

\end{document}